\def \p{\partial}
\newtheorem{thm}{Theorem}[section]
\newtheorem{cor}[thm]{Corollary}
\newtheorem{lem}[thm]{Lemma}
\newtheorem{prop}[thm]{Proposition}
\newtheorem{defn}[thm]{Definition}
\newtheorem{remk}[thm]{Remark}
\newtheorem{que}[thm]{Question}
\DeclareMathOperator{\ep}{\epsilon}
\DeclareMathOperator{\C}{\mathbb C}
\DeclareMathOperator{\R}{\mathbb R}
\DeclareMathOperator{\rk}{rk}
\DeclareMathOperator{\tr}{tr}
\begin{document}
\title{\small Kodaira dimensions of almost complex manifolds II}
\author{Haojie Chen }
\address{Department of Mathematics\\ Zhejiang Normal University\\ Jinhua Zhejiang, 321004, China}
\email{chj@zjnu.edu.cn}
\author{Weiyi Zhang}
\address{Mathematics Institute\\  University of Warwick\\ Coventry, CV4 7AL, England}
\email{weiyi.zhang@warwick.ac.uk}

\begin{abstract}
This is the second of a series of papers where we study the plurigenera, the Kodaira dimension and the Iitaka dimension on compact almost complex manifolds. By using the pseudoholomorphic pluricanonical map, we define the second version of Kodaira dimension as well as Iitaka dimension on compact almost complex manifolds. We show the almost complex structures with the top Kodaira dimension are integrable. For compact almost complex 4-manifolds with Kodaira dimension one, we obtain elliptic fibration like structural description. Some vanishing theorems in complex geometry are generalized to the almost complex setting.

For  tamed symplectic $4$-manifolds, we show that the almost Kodaira dimension is bounded above by the symplectic Kodaira dimension, by using a probabilistic combinatorics style argument. The appendix also contains a few results including answering a question of the second author that there is a unique subvariety in exceptional curve classes for irrational symplectic $4$-manifolds, as well as extending Evans-Smith's constraint on symplectic embeddings of certain rational homology balls by removing the assumption on the intersection form. 

\end{abstract}

\date{}
\maketitle

\tableofcontents

\section{Introduction}
The Iitaka dimension for a holomorphic line bundle $L$ over a compact complex manifold is a numerical invariant to measure the size of the space of holomorphic sections. It could be  defined as the growth rate of the dimension of the space $H^0(X, L^{\otimes d})$, or equivalently the maximal image dimension of the rational map to projective space determined by powers of $L$, or $1$ less than the dimension of the section ring of $L$. When $L$ is the canonical bundle $\mathcal K_X$, the corresponding Iitaka dimension is called the Kodaira dimension of $X$.

In the first paper of this series \cite{CZ}, we have generalized the notions of  the Iitaka dimension and Kodaira dimension to compact almost complex manifolds using the growth rate of the pseudoholomorphic sections of the corresponding complex line bundle endowed with a pseudoholomorphic structure, or equivalently, a bundle almost complex structure.

More precisely, let $E$ be a complex vector bundle  over an almost complex manifold $(X,J)$. A bundle almost complex structure $\mathcal J$ on $E$ is a special almost complex structure on the total space of the $E$, see \cite{DeT, CZ}.  There is a bijection between bundle almost complex structures and the pseudoholomorphic structures on $E$. The $(E, \mathcal J)$-genus is defined as $P_{E, \mathcal J}:=\dim H^0(X, (E, \mathcal J)),$ where $H^0(X, (E, \mathcal J))$ denotes the space of $(J,\mathcal J)$ pseudoholomorphic sections. As shown by Hodge theory in \cite{CZ}, $P_{E, \mathcal J}$ is of finite dimension. The $m^{th}$ plurigenus of $(X,J)$ is defined to be $P_m(X, J)=\dim H^0(X, \mathcal K_X^{\otimes m})$.

\begin{defn}\label{Iv1}

Let $L$ be a complex line bundle with bundle almost complex structure $\mathcal J$ over $(X, J)$. The Iitaka dimension $\kappa^J(X, (L, \mathcal J))$ is defined as
$$\kappa^J(X, (L, \mathcal J))=\begin{cases}\begin{array}{cl} -\infty, &\ \text{if} \ P_{L^{\otimes m},\mathcal J}=0\  \text{for any} \ m\geq 0\\
\limsup_{m\rightarrow \infty} \dfrac{\log P_{L^{\otimes m},\mathcal J}}{\log m}, &\  \text{otherwise.}
\end{array}\end{cases}$$

The Kodaira dimension $\kappa^J(X)$ is defined by choosing $L=\mathcal K_X$ and $\mathcal J$ to be the bundle almost complex structure induced by $\bar{\p}$.
\end{defn}

In this paper, we will introduce the second version of the Iitaka dimension and Kodaira dimension on compact almost complex manifolds by generalizing the equivalent definition in the complex setting which is the maximal image dimension of the rational map to projective space by the holomormphic sections of $L^{\otimes m}$. In the complex setting, important structural results and speculations were obtained using this interpretation.

Assume that $L$ is a complex line bundle over a compact almost complex manifold $(X,J)$ with a bundle almost complex structure $\mathcal J$. As in complex setting, the map $\Phi_{L, \mathcal J}: X\setminus B\rightarrow \mathbb CP^N$  is defined as $\Phi_{L, \mathcal J}(x)=[s_0(x): \cdots :s_{N}(x)]$, where $s_i$ constitute a basis of the linear space $H^0(X, (L, \mathcal J))$, and $B\subset X$ is the set of the base locus, {\it i.e.} the set of points $x\in X$ such that $s_i(x)=0$ for all $0\le i\le N$. When $L$ is $\mathcal K_X^{\otimes m}$ with the standard bundle almost complex structure, $\Phi_{L, \mathcal J}$ is denoted by $\Phi_m$ and called the {\it pluricanonical map}.

We first show that $\Phi_{L, \mathcal J}$ is pseudoholomorphic between $X\setminus B$ and the standard projective space. Following the study of pseudoholomorphic maps from an almost complex manifolds to a complex manifold, we prove that the image of $\Phi_{L, \mathcal J}(X\setminus B)$ is an open analytic subvariety in $\mathbb CP^N$. As a result, we define
\begin{defn}\label{Iv2}
The Iitaka dimension $\kappa_J(X, (L, \mathcal J))$ of a complex line bundle $L$ with bundle almost complex structure $\mathcal J$ over $(X, J)$ is defined as
$$\kappa_J(X, (L, \mathcal J))=\begin{cases}\begin{array}{cl} -\infty, &\ \text{if} \ P_{L^{\otimes m}}=0\  \text{for any} \ m\geq 0\\
\max\dim_{\C} \Phi_{L^{\otimes m}} (X\setminus B), &\  \text{otherwise.}\end{array}
\end{cases}$$

The Kodaira dimension $\kappa_J(X)$ is defined by choosing $L=\mathcal K_X$ and $\mathcal J$ to be the bundle almost complex structure induced by $J$.
\end{defn}
We have shown that the plurigenera $P_m(X,J)$ are birational invariants on compact almost complex 4-manifolds in \cite{CZ}. It follows that $\kappa_J(X)$ is also a birational invariant in dimension $4$ (Theorem \ref{Kodbir'}).

We then use the Kodaira dimension $\kappa_J(X)$ to study the properties of $(X,J)$. By definition, $\kappa_J(X, (L, \mathcal J))\le \dim_{\C}X$. When the top value $\dim_{\C}X$ is achieved, we prove that $J$ is integrable. The following is Theorem \ref{Koddimmax}.

\begin{thm}
Let $(X, J)$ be a connected almost complex manifold and $L$ be a complex line bundle with a bundle almost complex structure $\mathcal J$.

\begin{enumerate}
\item If $\kappa_J(X, (L, \mathcal J))=\dim_{\C} X$, then $J$ is integrable.
\item If $P_{L, \mathcal J}\ge 2$, then $\kappa_J(X, (L, \mathcal J))\ge 1$.

\end{enumerate}
\end{thm}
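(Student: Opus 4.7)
My plan for part (1) is to exhibit an open dense subset of $X$ on which $J$ coincides with the pullback of the standard integrable structure of $\mathbb{CP}^N$, and then invoke continuity of the Nijenhuis tensor to extend integrability to all of $X$.

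By hypothesis there exists $m$ such that $\Phi:=\Phi_{L^{\otimes m}}$ has image of complex dimension $n=\dim_{\mathbb C}X$. The base locus $B$ is the common zero set of finitely many pseudoholomorphic sections, so it is closed of real codimension at least two, and $X\setminus B$ is open and dense. Let $Y\subset\mathbb{CP}^N$ be the closure of $\Phi(X\setminus B)$, an analytic subvariety of pure dimension $n$ by the previously recalled structural result, with smooth locus $Y_{\mathrm{reg}}$ of complex dimension $n$ whose complement has strictly smaller dimension. Let $U\subset X\setminus B$ consist of those $x$ with $\Phi(x)\in Y_{\mathrm{reg}}$ and $d\Phi(x)$ of maximal complex rank $n$. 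Then $U$ is open (by semicontinuity of rank and openness of $\Phi^{-1}(Y_{\mathrm{reg}})$) and nonempty (since the image achieving dimension $n$ forces full rank somewhere); using the analytic structure of $Y$ together with the classical rank-stratification applicable to pseudoholomorphic maps between equidimensional source and target, the complement of $U$ is nowhere dense, so $U$ is dense in $X$. On $U$, the pseudoholomorphicity relation $d\Phi\circ J=J_{\mathrm{std}}\circ d\Phi$ combined with invertibility of $d\Phi$ yields $J=(d\Phi)^{-1}\circ J_{\mathrm{std}}\circ d\Phi$, which is the pullback of the integrable structure of $Y_{\mathrm{reg}}$ along a local diffeomorphism. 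Hence the Nijenhuis tensor $N_J$ vanishes on $U$, and by continuity of $N_J$ together with density of $U$, $N_J\equiv 0$ on $X$, so $J$ is integrable.

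For part (2), the idea is that two linearly independent sections cannot produce a constant projective map. Choose linearly independent $s_0,s_1\in H^0(X,(L,\mathcal J))$ (possible since $P_{L,\mathcal J}\geq 2$), extend to a basis $s_0,\dots,s_N$, and write $\Phi_L=[s_0:\cdots:s_N]$. If $\dim_{\mathbb C}\Phi_L(X\setminus B)=0$, then the image is a point $[a_0:\cdots:a_N]\in\mathbb{CP}^N$; after relabeling we may assume $a_0\neq 0$. Then the section $a_0 s_1-a_1 s_0$ vanishes at every $x\in X\setminus B$, hence identically on $X$ by continuity of sections and density of $X\setminus B$. This contradicts the linear independence of $s_0$ and $s_1$. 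Therefore $\dim_{\mathbb C}\Phi_L(X\setminus B)\geq 1$, which gives $\kappa_J(X,(L,\mathcal J))\geq 1$.

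The principal obstacle is the density of the regular locus $U$ in part (1). Its nonemptiness and openness are immediate, but density requires ruling out the possibility that $d\Phi$ drops rank, or that $\Phi$ takes values in $Y_{\mathrm{sing}}$, on a whole open subset of $X$. In the pseudoholomorphic setting this rests on the preceding analytic-subvariety description of the image and on the analog, for pseudoholomorphic maps between equidimensional targets, of the classical fact that holomorphic maps between complex $n$-manifolds branch only along a proper analytic subset. Once density is secured, the remainder of the argument -- pullback of integrability across a $(J,J_{\mathrm{std}})$-holomorphic local diffeomorphism and continuity of $N_J$ -- is formal, and part (2) is an elementary linear independence argument.
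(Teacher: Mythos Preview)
Your approach matches the paper's almost exactly: define a regular open dense set where $\Phi$ is a local diffeomorphism onto a complex manifold, pull back integrability there, and extend by continuity of $N_J$; for part (2), show a constant image forces a nontrivial linear relation among basis sections. Your part (2) is in fact slightly cleaner than the paper's, which phrases the contradiction via unique continuation rather than density of $X\setminus B$ and continuity of sections.

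The one genuine soft spot, which you yourself flag, is the density of $U$ in part (1). You appeal to ``classical rank-stratification applicable to pseudoholomorphic maps between equidimensional source and target'' and to an analog of the branching-locus fact for holomorphic maps, but neither is available off the shelf in the almost complex category. The paper handles this by invoking unique continuation for pseudoholomorphic submanifolds (Proposition~2.3 of \cite{Z2}) three separate times: to show the base locus $B$ contains no open set, to show the singularity subset of $\Phi_L$ (where $d\Phi$ drops rank) contains no open set, and to show the preimage of $Y_{\mathrm{sing}}$ contains no open set. The rank-drop locus, in particular, is handled by viewing $d\Phi$ as a pseudoholomorphic section of a jet bundle and intersecting with the zero section, exactly as in the proof of Proposition~\ref{imageprop}. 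Once you cite that mechanism explicitly, your argument is complete and coincides with the paper's.
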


Although a non-integrable almost complex structure cannot have $\kappa_J$ being equal to its complex dimension, the examples of compact $2n$-dimensional nonintegrable almost complex manifolds in Section 6.3 of our first paper \cite{CZ}  also realize all the values of $\{-\infty, 0, 1, \cdots, n-1\}$ for $\kappa_J(X)$ (as well as $\kappa^J(X)$)  and any $n\geq 2$. The examples are based on a $4$-dimensional example with Kodaira dimension $1$, which is a deformation of the product complex structure on $T^2\times \Sigma_{g\ge 2}$.

For almost complex manifolds with line bundles whose almost complex Iitaka dimensions take value between $1$ and $n-1$, we expect that there are important structural result of $X$. The following is such a result (Theorem \ref{ellsur}).

\begin{thm}\label{ellintro}
 Let $(X, J)$ be a connected almost complex 4-manifold. If the pluricanonical map is base-point-free and $\kappa_J(X)=1$, $X$ admits a pseudoholomorphic fibration with each fiber an elliptic curve and finitely many singular fibers.
\end{thm}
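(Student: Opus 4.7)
The plan is to mimic Kodaira's classical construction of elliptic fibrations from pluricanonical maps, porting each step into the pseudoholomorphic category using the structure theory for pseudoholomorphic maps developed earlier in the paper.

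First I would pick $m$ achieving $\dim_{\C}\Phi_m(X)=\kappa_J(X)=1$. By hypothesis the pluricanonical map is base-point-free, so $\Phi_m\colon X\to \mathbb CP^N$ is a globally defined pseudoholomorphic map whose image, by the analyticity statement preceding Definition \ref{Iv2}, is a one-dimensional compact analytic subvariety $Y\subset \mathbb CP^N$, i.e.\ a (possibly singular) compact complex curve. Lifting $\Phi_m$ across the normalization $\tilde Y\to Y$ and then applying a pseudoholomorphic Stein-type factorization, I obtain a pseudoholomorphic surjection $\pi\colon X\to C$ with connected fibers, where $C$ is a smooth compact Riemann surface.

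Next I would analyze a generic smooth fiber $F_c=\pi^{-1}(c)$. Since it is a fiber, $F_c\cdot F_c=0$. Because $\Phi_m$ collapses $F_c$ to a single point of $\mathbb CP^N$ and the linear system is base-point-free, every pluricanonical section $s_i$ restricts to $\lambda_i\sigma$ on $F_c$ for a common nowhere-vanishing section $\sigma$ of $\mathcal K_X^{\otimes m}|_{F_c}$. Hence $\mathcal K_X^{\otimes m}|_{F_c}$ is trivial and $\deg(\mathcal K_X|_{F_c})=0$. The pseudoholomorphic adjunction formula $\mathcal K_{F_c}=(\mathcal K_X+F_c)|_{F_c}=\mathcal K_X|_{F_c}$ (using $F_c^2=0$) then shows $F_c$ has genus one, i.e.\ is a pseudoholomorphically embedded elliptic curve. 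The singular fibers sit over the critical values of $\pi$; these form a proper pseudoholomorphic subvariety of the Riemann surface $C$, hence a discrete set, and finite by compactness of $X$.

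The main obstacle is producing the smooth base $C$ together with a pseudoholomorphic projection with connected fibers—an almost complex analogue of Stein factorization. One should leverage the local structure theorems for pseudoholomorphic maps from an almost complex manifold to a complex manifold (already invoked to prove that the image of $\Phi_{L,\mathcal J}$ is an analytic subvariety) to factor $\Phi_m$ through $\tilde Y$, and then pass to a further finite cover to force connected fibers. A secondary technical point is the pseudoholomorphic adjunction formula for an embedded pseudoholomorphic submanifold with trivial normal bundle; this should reduce to the integrable case by working in a tubular neighborhood of $F_c$, where the normal direction can be trivialized pseudoholomorphically along the fiber.
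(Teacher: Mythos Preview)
Your outline is correct and matches the paper's argument closely: pass to a smooth base curve, identify the generic fiber as genus one via $F^2=0$ and $K_X\cdot F=0$ together with adjunction, and observe that the critical values of a pseudoholomorphic map to a curve form a finite set. The paper packages two of your steps slightly differently. First, instead of lifting through the normalization $\tilde Y\to Y$, it argues directly that the image $\Phi_m(X)$ is already smooth: a singular point of the image would force a singular point of $X$, since near any smooth point of a fiber the map is a submersion onto its image. Second, for the Stein factorization the paper gives an explicit construction: it equips the set of \emph{connected components} of fibers with a Riemann surface structure by choosing small transverse holomorphic disks, noting that near a component of multiplicity $l$ the local model is the branched cover $z\mapsto z^l$; this produces $\Sigma$ together with a branched cover $\pi:\Sigma\to\Phi_m(X)$ and the desired factorization $\Phi_m=\pi\circ\Phi_m'$ with connected elliptic fibers. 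Your line-bundle restriction argument for triviality of $\mathcal K_X^{\otimes m}|_{F_c}$ is a clean alternative to the paper's computation via the homology class of the fiber; both routes feed into the same adjunction step.
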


The examples of almost complex $4$-manifolds with $\kappa^J=\kappa_J=1$ contain all the smooth pseudoholomorphic elliptic fibrations over a Riemann surface of genus greater than one with tamed almost complex structure.

In dimension $4$, there is a version of symplectic Kodaira dimension $\kappa^s$ \cite{L}, which measures the positivity of the symplectic canonical class (see Definition \ref{symKod}). This Kodaira dimension is a diffeomorphism invariant of any symplectic $4$-manifold {\it loc. cit.}, and it coincides with the complex Kodaira dimension if the symplectic $4$-manifold admits a complex structure \cite{DZ}. The invariant provides a rough classification scheme for symplectic $4$-manifolds \cite{L}. It is studied extensively in $4$-manifolds theory, {\it e.g.} its behaviour under various symplectic surgeries like fiber sum, Luttinger surgery, and rational blowdown (see the references in \cite{Li15}). It is also used to show some smooth structures are exotic.

It is natural to ask the connections between the almost complex Kodaira dimensions and the symplectic Kodaira dimension in dimension $4$. We are able to bound the Kodaira dimension $\kappa^J(X)$ from above by $\kappa^s$ at least when the almost complex structure $J$ is tamed.  Recall that an almost complex structure $J$ is said to be tamed if there is a symplectic form $\omega$  such that the bilinear form $\omega(\cdot, J\cdot)$ is positive definite. To establish this result, we introduce an intermediate Iitaka dimension $\kappa_W^J(X, e)$ for tamed almost complex structure $J$ and $e\in H^2(M, \mathbb Z)$. The Weil divisor version of Kodaira dimension is defined as $\kappa_W^J(X):=\kappa_W^J(X, K_X)$, where $K_X$ is the first Chern class of $\mathcal K_X$. Roughly speaking, this version of Iitaka dimension measures the growth rate of the numbers of all pseudoholomorphic curves in the multiples of the class $e$. Among various estimates of this Iitaka dimensions in Section \ref{Weil}, we have the following comparison results for Kodaira dimensions.

\begin{thm}\label{JWs}
Let $(X, J)$ be a tamed almost complex $4$-manifold. Then $$\kappa^J(X)\le \kappa_W^J(X)\le \kappa^s(X).$$
\end{thm}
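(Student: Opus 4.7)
The plan is to establish the two inequalities separately: the left inequality $\kappa^J(X)\le \kappa_W^J(X)$ by passing from pseudoholomorphic sections to their zero divisors, and the harder right inequality $\kappa_W^J(X)\le \kappa^s(X)$ by a case analysis across the four possible values of $\kappa^s(X)$.

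For $\kappa^J(X)\le \kappa_W^J(X)$, every nonzero pseudoholomorphic section $s\in H^0(X,\mathcal K_X^{\otimes m})$ has as its zero locus an effective pseudoholomorphic Weil divisor $Z(s)$ representing the class $mK_X$, and two sections yield the same divisor only when they differ by a nonzero scalar. The $P_m$-dimensional space $H^0(X,\mathcal K_X^{\otimes m})$ thus produces a $(P_m-1)$-dimensional projective family of distinct pseudoholomorphic representatives of $mK_X$, which feeds directly into the count defining $\kappa_W^J(X,mK_X)$ and yields a lower bound polynomial in $P_m$. Taking $\limsup\log/\log m$ gives $\kappa_W^J(X)\ge \kappa^J(X)$.

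For $\kappa_W^J(X)\le \kappa^s(X)$, I would split according to $\kappa^s(X)\in\{-\infty,0,1,2\}$ using the Liu--Taubes classification. When $\kappa^s(X)=-\infty$, the manifold is symplectically rational or ruled, and the structure of exceptional and fiber classes together with the positive $\omega$-area constraint $[C]\cdot[\omega]>0$ forces $mK_X$ to admit only finitely many pseudoholomorphic representatives for large $m$, so $\kappa_W^J(X)=-\infty$. When $\kappa^s(X)=0$, a minimal model has numerically trivial canonical class and only a bounded number of representatives of $mK_X$ survive under the blow-down/exceptional correction, so $\kappa_W^J(X)\le 0$. When $\kappa^s(X)=1$, we have $K_X^2=0$ and $K_X\cdot[\omega]>0$, and each irreducible pseudoholomorphic curve in $mK_X$ has $\omega$-area bounded by $m(K_X\cdot[\omega])$; Gromov compactness together with the adjunction inequality then bounds the count of such curves linearly in $m$, giving $\kappa_W^J(X)\le 1$.

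The main obstacle is the case $\kappa^s(X)=2$, where both $K_X\cdot[\omega]>0$ and $K_X^2>0$ and one must establish the quadratic bound $\kappa_W^J(X)\le 2$. Here the plan is to enumerate pseudoholomorphic curves in $mK_X$ by writing them as integer combinations $mK_X=\sum a_i[C_i]$ over a set of irreducible pseudoholomorphic classes (finite up to any bounded area), with $a_i\in\mathbb Z_{\ge 0}$ subject to the linear constraint $\sum a_i([C_i]\cdot[\omega])=m(K_X\cdot[\omega])$ and the quadratic constraint coming from $\sum_{i,j}a_ia_j([C_i]\cdot[C_j])=m^2 K_X^2$. Nonnegativity of pairwise intersections between distinct irreducible $J$-curves, together with these two scalar constraints, confines $(a_i)$ to a region of $\mathbb Z_{\ge 0}^r$ of volume $O(m^2)$; a lattice-point count inside this region, carried out via the probabilistic combinatorics style argument mentioned in the abstract, yields the required quadratic growth bound and hence $\kappa_W^J(X)\le 2$.
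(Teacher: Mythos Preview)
Your left inequality is essentially the paper's Lemma \ref{0tosec}, so that part is fine.

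The right inequality has genuine gaps. For $\kappa^s=-\infty$: you assert $mK_X$ has ``only finitely many'' pseudoholomorphic representatives and conclude $\kappa_W^J=-\infty$. But by definition $\kappa_W^J=-\infty$ only when $\mathcal{M}_{mK_X}=\emptyset$ for every $m>0$; a single representative already forces $\kappa_W^J\ge 0$. The paper's argument is that $mK_X$ is never $J$-effective for tamed $J$ when $\kappa^s=-\infty$, so all these moduli spaces are empty.

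For $\kappa^s=1,2$: your plan bounds the number of \emph{homological decomposition types} $mK_X=\sum a_i[C_i]$ via a lattice-point count, but this is not what $\dim\mathcal{M}_{mK_X}$ measures. For a fixed decomposition type, each irreducible component $C_i$ may still move in a positive-dimensional family inside its own homology class, and that continuous moduli is exactly what contributes to $\dim\mathcal{M}_{mK_X}$. A lattice-point count says nothing about it. The paper's key tool, which your proposal is missing, is a point-constraint uniqueness lemma (Lemma \ref{pointscurve}): if one marks $m_i\max\{mK_X\cdot[C_i]+1,0\}$ smooth points on each component $C_i$ of $\Theta\in\mathcal{M}_{mK_X}$, then $\Theta$ is the \emph{unique} element of $\mathcal{M}_{mK_X}$ passing through those weighted points. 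This over-parametrizes $\mathcal{M}_{mK_X}$ by a product $X^N$, and a direct estimate shows $N$ is bounded linearly in $m$ when $\kappa^s=1$ (since $K_X\cdot[C_i]=0$ and the number of components is $O(m)$) and quadratically when $\kappa^s=2$. That over-parametrization---not a count of decomposition types---is the ``probabilistic combinatorics style'' argument the abstract alludes to.
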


As a corollary (Corollary \ref{W<J}), for any tamed almost complex structure $J$ on a complex surface $(X, J_X)$, $\kappa^J(X)\le \kappa^{J_X}(X).$

Theorem \ref{JWs}  is obtained through estimations of pseudoholomorphic curves in multiples of the canonical class using a very rough and ``overdetermined" parametrization of the moduli space pseudoholomorphic subvarieties in class $mK_X$. These points given by the parametrization fix a unique element in the moduli space and, at the same time, the number of the points as $m$ grows is bounded above by a polynomial whose degree matches up with symplectic Kodaira dimension.
This is very similar to the style of estimates in probabilistic combinatorics.

We next study how the positivity of line bundles is related to the almost complex Iitaka dimension. The relation between positivity/negativity of certain curvatures and vanishing of groups of holomorphic sections has been an active topic in complex geometry, at least dating back to Kodaira vanishing theorem.  A negative line bundle on a compact complex manifold, {\it i.e.} a holomorphic line bundle whose first Chern class is a negative class, always has negative Iitaka dimension. To obtain its natural extension to almost Hermitian setting, we apply the intersection theory of almost complex manifolds developed in \cite{Z2}. With the built up there and Bochner type arguments on almost Hermitian manifolds, we prove that a complex line bundle over compact almost complex manifolds does not admit any pseudoholomorphic sections if it satisfies some negative conditions. Using similar arguments, we also obtain the vanishing of plurigenera for compact almost complex manifolds admitting Hermitian metrics with some positive curvature conditions, which generalizes the known results in the complex setting (in the K\"ahler case by Yau \cite{Yau} and in Hermitian case by Yang \cite{Y}). The results are summarized in the following (a combination of Theorem \ref{c1<0van} and Theorem \ref{metric}).
\begin{thm}
Let $(X, J)$ be a compact almost complex $2n$-manifold and $L$ be a complex line bundle over $X$.
\begin{enumerate}
\item If there is a $J$-compatible symplectic form $\omega$ such that $[\omega]^{n-1}\cdot c_1(L)<0$, then for any bundle almost complex structure $\mathcal J$ on $L$, $P_{L, \mathcal J}=0$ and $\kappa^J(X, (L, \mathcal J))=\kappa_J(X, (L, \mathcal J))=-\infty$.
\item If $X$ admits a Hermitian metric with positive Chern scalar curvature everywhere or a Gauduchon metric with positive total Chern scalar curvature, then $\kappa^J(X)=\kappa_J(X)=-\infty$.
\end{enumerate}
\end{thm}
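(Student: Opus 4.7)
The plan is to treat the two parts by quite different methods: part (1) by the intersection theory of pseudoholomorphic subvarieties from \cite{Z2}, and part (2) by a Bochner / Lelong-Poincar\'e analysis using the Chern connection on $\mathcal K_X^{\otimes m}$.

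For part (1), I would argue by contradiction: suppose some $m\ge 1$ admits a nonzero $s\in H^0(X,(L^{\otimes m},\mathcal J^{\otimes m}))$, where $\mathcal J^{\otimes m}$ is the induced bundle almost complex structure. By the regularity theory for pseudoholomorphic sections developed in \cite{CZ}, the zero set $Z(s)$ is a $J$-pseudoholomorphic subvariety (with multiplicities) of real codimension two, whose fundamental class is Poincar\'e dual to $c_1(L^{\otimes m})=m\,c_1(L)$. Since $\omega$ is $J$-compatible, $\omega^{n-1}$ restricts to a nonnegative $(n-1,n-1)$-form on the smooth locus of any component of $Z(s)$, and the intersection pairing of \cite{Z2} then gives
\[
    m\,[\omega]^{n-1}\cdot c_1(L) \;=\; \int_{Z(s)}\omega^{n-1} \;\ge\; 0,
\]
contradicting the hypothesis $[\omega]^{n-1}\cdot c_1(L)<0$. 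Hence $P_{L^{\otimes m},\mathcal J^{\otimes m}}=0$ for every $m\ge 1$, and both Iitaka dimensions equal $-\infty$ directly from Definitions \ref{Iv1} and \ref{Iv2}.

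For part (2), equip $X$ with the given Hermitian metric $\omega$ and $\mathcal K_X^{\otimes m}$ with the induced Hermitian metric together with the Chern connection. Writing $\rho$ for the Chern-Ricci form of $\omega$, the Chern curvature of $\mathcal K_X^{\otimes m}$ equals $-m\rho$, so for a pseudoholomorphic section $s$ of $\mathcal K_X^{\otimes m}$ the Lelong-Poincar\'e identity takes the form
\[
    i\partial\bar\partial \log|s|^2 \;=\; m\rho + 2\pi [Z(s)]
\]
as currents on $X$. In the pointwise positive case, tracing with $\omega$ shows that $\log|s|^2$ is strongly $\Delta^{Ch}_\omega$-subharmonic on $X\setminus Z(s)$ with value $-\infty$ on $Z(s)$; upper semi-continuous extension to $X$ and the maximum principle at a maximum of $\log|s|^2$ give $0\ge 2m\,S^{Ch}>0$, a contradiction. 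In the Gauduchon case, I would regularize by $u_\epsilon=\log(|s|^2+\epsilon)$, pair the identity with $\omega^{n-1}$, and pass to the limit; the Gauduchon condition $\partial\bar\partial\,\omega^{n-1}=0$ combined with integration by parts yields
\[
    0 \;=\; \tfrac{m}{n}\int_X S^{Ch}\,\omega^n \;+\; 2\pi\int_{Z(s)}\omega^{n-1},
\]
whose right-hand side is strictly positive by hypothesis, another contradiction. Applying this for every $m\ge 1$ yields $\kappa^J(X)=\kappa_J(X)=-\infty$.

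The hard part will be establishing the precise identities $F^{\mathcal K_X^{\otimes m}}=-m\rho$ and the Lelong-Poincar\'e formula above honestly in the almost Hermitian setting, where the Nijenhuis tensor of $J$ could in principle inject torsion terms into the commutator of $\partial$ and $\bar\partial$ acting on sections. The bundle almost complex structure on $\mathcal K_X^{\otimes m}$ is however engineered so that a section is pseudoholomorphic exactly when it is annihilated by a Cauchy-Riemann type operator, and a Weitzenb\"ock-style pairing with its formal adjoint reproduces the integrable-case algebra up to a nonnegative error that can be absorbed. A secondary technical point, needed in part (1), is the validity of the positivity inequality $[\omega]^{n-1}\cdot \mathrm{PD}[Z(s)] \ge 0$ when $Z(s)$ is singular and carries multiplicities; this is exactly what is supplied by the intersection theory of \cite{Z2}.
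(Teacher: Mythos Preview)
Your approach to part (1) has a genuine gap in dimensions $2n>4$. The assertion that the zero locus $Z(s)$ of a pseudoholomorphic section is a $J$-holomorphic subvariety carrying a fundamental class Poincar\'e dual to $c_1(L^{\otimes m})$ is precisely Corollary~1.3 of \cite{Z2}, and that result is proved only for $4$-manifolds. The paper is explicit about this: immediately before Theorem~\ref{c1<0van} it remarks that one could try to generalize Corollary~1.3 of \cite{Z2} to arbitrary codimension, but instead opts for an analytic method that requires only Theorem~3.8 of \cite{Z2} (the zero set has finite $(2n-2)$-Hausdorff measure, hence measure zero). The paper then derives the Bochner identity $\Delta_\omega|s|_h^2=|\nabla s|_h^2-\sqrt{-1}\,\tr_\omega\Theta\,|s|_h^2$ on all of $X$ by continuity from the dense open complement of $Z(s)$, conformally rescales $h$ (solving a Poisson equation for $\Delta_\omega$, which is legitimate because $\omega$ is closed, hence Gauduchon) to make $\sqrt{-1}\,\tr_\omega\Theta'$ a negative constant, and concludes via the maximum principle. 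Your proposed route is exactly the $n=2$ argument of Proposition~\ref{neginfty}; extending it to higher $n$ is not supplied by the references you invoke.

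For part (2), your pointwise case is essentially the paper's argument rewritten for $\log|s|^2$ instead of $|s|^2$, and is fine. Your Gauduchon case, however, inherits the same gap: the identity $0=\tfrac{m}{n}\int_X S^{Ch}\omega^n+2\pi\int_{Z(s)}\omega^{n-1}$ presupposes that $[Z(s)]$ is a well-defined closed positive $(1,1)$-current in class $m\,c_1(\mathcal K_X)$, i.e.\ a Lelong--Poincar\'e formula in the almost Hermitian setting, which again is not available beyond $n=2$. The paper sidesteps this entirely: it conformally changes the bundle metric on $\mathcal K_X^{\otimes m}$ so that the traced curvature becomes the positive constant $\tfrac{k}{\int_X\omega^n}$ (the Poisson equation is solvable because $\int_X\Delta_\omega f\,\omega^n=0$ under the Gauduchon condition $\partial\bar\partial\omega^{n-1}=0$), and then the pointwise maximum-principle argument applies verbatim. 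This is both simpler and avoids any current-theoretic input.
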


The last section of the paper contains some speculations and open problems generated from the study of our almost complex Iitaka dimensions. It is divided into 5 subsections on different aspects. It seems that our paper is just a start of a vast area with a rich source of interesting problems for almost complex manifolds. It is related to many other fields such as Hodge theory, birational geometry, complex geometry, and topology of manifolds. We expect Section \ref{pro} would provide the reader with a glance of these interesting problems.

Finally, there is an appendix on the uniqueness of exceptional curves for irrational symplectic $4$-manifolds. Its main result gives an affirmative answer to a question of the second author \cite{p=h}.

\begin{thm}\label{A1}
Let $M$ be a symplectic $4$-manifold which is not diffeomorphic to $\mathbb CP^2\#k\overline{\mathbb CP^2}, k\ge 1$. Then for any tamed $J$, there is a unique $J$-holomorphic subvariety in any exceptional curve class $E$, whose irreducible components are smooth rational curves of negative self-intersection. Moreover, this $J$-holomorphic subvariety is an exceptional curve of the first kind when $M$ is also not diffeomorphic to $S^2\times \Sigma_g\#k\overline{\mathbb CP^2}$.
\end{thm}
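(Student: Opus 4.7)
The plan is to combine Taubes' SW=Gr correspondence (for existence), adjunction together with positivity of intersections (for the component structure), and structural results for symplectic $4$-manifolds (for uniqueness and the ``moreover'' conclusion). For any tamed $J$, Taubes' theorem produces a $J$-holomorphic subvariety $\Theta = \sum_i m_i C_i$ in the exceptional class $E$. Using $E^2 = K\cdot E = -1$, the identity
\[
-2 \;=\; K\cdot \Theta + \Theta^2 \;=\; \sum_i m_i(K\cdot C_i + m_i C_i^2) + 2\!\sum_{i<j} m_i m_j\, C_i \cdot C_j,
\]
the adjunction inequality $2g(C_i)-2 \le K\cdot C_i + C_i^2$, and positivity $C_i\cdot C_j \ge 0$ for $i\ne j$ together force each component $C_i$ to be a smooth rational curve with $C_i^2 < 0$; this is a bookkeeping step of a type already used by the second author in \cite{p=h}.

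For uniqueness, suppose $\Theta_1 \ne \Theta_2$ both represent $E$. Collecting common components with minimal multiplicities, write $\Theta_1 = A+B$ and $\Theta_2 = A+C$, where $B$ and $C$ are nonzero effective cycles sharing no irreducible component. Then $[B] = [C] =: e$ and positivity yields $B\cdot C \ge 0$, while the relations $\Theta_1^2 = \Theta_2^2 = \Theta_1 \cdot \Theta_2 = -1$ give $B^2 = C^2 = B\cdot C = e^2$. This reduces the problem to showing that a disjoint pair of effective pseudoholomorphic representatives of a single class $e$ cannot arise on an irrational symplectic $4$-manifold. I would invoke rigidity of Seiberg--Witten basic classes when $b^+(M) \ge 2$, and the Taubes--Li/Li--Liu wall-crossing analysis when $b^+(M) = 1$: iterating the decomposition on the smaller class $e$ and applying the component structure of Step~1, the resulting exceptional configuration forces $M$ to be a blow-up of $\mathbb{CP}^2$, against the hypothesis.

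The ``moreover'' statement would then follow by analyzing the reducible case: if the unique $\Theta$ is reducible, the component data of Step~1 together with connectedness yields a chain of smooth negative rational curves whose contraction produces a symplectic sphere fibre, and McDuff's structure theorem for symplectic ruled surfaces identifies $M$ with $S^2 \times \Sigma_g \# k\overline{\mathbb{CP}^2}$, the only remaining excluded family. The main obstacle is the uniqueness step in the $b^+(M) = 1$ case, where Gromov invariants are chamber-dependent and uniqueness cannot be read off from the invariants alone; one has to combine the wall-crossing with the exceptional configuration analysis of \cite{p=h} to rigidify the $J$-holomorphic representatives and extract the contradiction.
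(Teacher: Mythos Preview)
Your Step~1 does not go through as written. The identity you display has summands $m_i K\cdot C_i + m_i^2 C_i^2$, not $m_i(K\cdot C_i + C_i^2)$, so adjunction cannot be applied termwise once some $m_i>1$; and even with all $m_i=1$ nothing in the numerics forces every component to be rational. For instance, a configuration $C_1+C_2$ with $C_1$ a smooth torus of square $0$, $C_2$ a smooth $(-1)$-sphere, and $C_1\cdot C_2=0$ satisfies your identity with equality, yet $C_1$ is not rational. The paper does not argue this way: it uses that since $K\cdot E=-1$ some component has $K\cdot C_i<0$, and then invokes Lemma~2.1 of \cite{p=h} (valid precisely because $M$ is not rational or ruled) to conclude that this component is a smooth $(-1)$-sphere. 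After Sikorav's local integrability one blows it down and inducts; this is Proposition~\ref{ES211}, and it yields simultaneously that every component is a smooth negative rational curve \emph{and} that $\Theta$ is an exceptional curve of the first kind. So the ``moreover'' is proved first, not last.

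Your uniqueness argument also has a gap. The reduction to two effective cycles $B,C$ with no common component and $[B]=[C]=e$, $e^2=B\cdot C\ge 0$, is fine, but the class $e$ is in general neither exceptional nor a basic class, so ``iterating the decomposition on the smaller class $e$'' has no content, and the appeal to SW rigidity/wall-crossing is not made precise. The paper's route is again by blowdown: having one representative $\Theta_0$ which is an exceptional curve of the first kind, one uses Sikorav to make $J$ integrable near $|\Theta_0|\cup|\Theta|$ and then successively contracts the $(-1)$-curves in $\Theta_0$. Because distinct exceptional classes on a non-rational, non-ruled $M$ are orthogonal, at each stage the blown-down curve is either a component of $\Theta$ or disjoint from it, so $\Theta$ descends; after all blowdowns $\pi_*[\Theta]=0$ forces $|\Theta|\subset|\Theta_0|$, and linear independence of the classes $[C_i]$ then pins down the multiplicities. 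The irrational ruled case is handled separately by \cite{s2mod}. Your outline misses both the role of Lemma~2.1 of \cite{p=h} and the blowdown mechanism, which are the actual engines of the proof.
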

On the other hand, as noticed in \cite{p=h, s2mod} as a somewhat surprising result, there are many integrable complex structures on rational surfaces where this statement does not hold. The link of the appendix with this paper is through Proposition \ref{1pcE}, which says that there is a unique $J$-holomorphic subvariety in any positive combination of exceptional curve classes for any tamed almost complex structure $J$ on a symplectic $4$-manifold which is not diffeomorphic to a rational or ruled surface. It is used in the proof of Theorem \ref{JWs} when the symplectic Kodaira dimension is zero.

As a by-product of the proof of Theorem \ref{A1}, we are able to improve the result of \cite{ES} on the symplectic version of the upper bound of Wahl singularities by removing their assumption on the dimension of self-dual harmonic forms.

The first author is partially supported by NSFC grant (11901530) and Zhejiang Provincial Natural Science Foundation (LY19A010017). He would like to thank Professors Kefeng Liu, Xiaokui Yang and Fangyang Zheng for their helpful conversations. The second author would like to thank Giancarlo Urz\'ua for his interest and discussion on Theorem \ref{ES1}.

Without otherwise mentioning, our objects are connected closed oriented manifolds.

\section{Pluricanonical maps}

In this section, we will give another definition of Iitaka dimension for almost complex manifolds. Regarding to Kodaira dimension, it is the maximum of the image of pluricanonical map. For this sake, we first derive some general properties of the image of a pseudoholomorphic map onto a complex manifold in Section \ref{imageph}. These are used to establish structural results for pluricanonical maps.

\subsection{Pseudoholomorphic subvarieties}\label{imageph}
In \cite{Z2}, we give a definition of pseudoholomorphic subvarieties for an arbitrary even dimension.
A $J$-holomorphic subvariety $\Theta$ of an almost complex manifold $(M, J)$ is a finite set of pairs $\{(V_i, m_i), 1\le i\le m\}$, where each $V_i$ is an irreducible $J$-holomorphic subvariety and each $m_i$ is a positive integer. Here an irreducible $J$-holomorphic subvariety is the image of a somewhere immersed pseudoholomorphic map $\phi: X\rightarrow M$ from a compact connected smooth almost complex manifold $X$. When each $V_i$ has complex dimension one, we call $\Theta$ a complex $1$-subvariety.

When $(M, J)$ is a complex manifold, the source $(X, J_X)$ could still be an almost complex manifold. Hence, it seems there are more $J$-holomorphic subvarieties than complex analytic subvarieties at a first glance. However, we observe that when $(M, J)$ is a complex manifold,  a $J$-holomorphic subvariety is exactly a complex analytic subvariety.

\begin{prop}\label{comsubvar}
If there is a somewhere immersed pseudoholomorphic map $\phi: X\rightarrow M$ from a compact connected smooth almost complex manifold $(X, J_X)$ to a complex manifold $(M, J)$, then $J_X$ is integrable. Therefore, a finite set of pairs $\{(V_i, m_i), 1\le i\le m\}$ on $(M, J)$ is a $J$-holomorphic subvariety if and only if it is a complex analytic subvariety.
\end{prop}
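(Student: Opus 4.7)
The plan is to pull back the vanishing Nijenhuis tensor of $(M, J)$ via $\phi$ and then propagate the resulting infinitesimal integrability of $J_X$ across all of $X$. The pseudoholomorphicity condition $d\phi \circ J_X = J \circ d\phi$ yields, by a direct computation from the definition of the Nijenhuis tensor, the naturality identity
$$d\phi\bigl(N_{J_X}(U, V)\bigr) \;=\; N_J\bigl(d\phi(U), d\phi(V)\bigr)$$
for any smooth vector fields $U, V$ on $X$. Since $(M, J)$ is complex, $N_J \equiv 0$, so $N_{J_X}(U, V)$ takes values pointwise in $\ker d\phi$.

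Let $\Omega \subset X$ be the open locus of immersion points, which is nonempty by hypothesis. On $\Omega$ the kernel is trivial, so $N_{J_X}|_\Omega \equiv 0$, and Newlander-Nirenberg makes $(\Omega, J_X|_\Omega)$ a complex manifold on which $\phi|_\Omega$ is a holomorphic immersion. Since $N_{J_X}$ is a continuous tensor and $X$ is connected, to finish it suffices to prove that $\Omega$ is dense in $X$. For this, I work locally near any $p \in X$: choose holomorphic coordinates $z^1, \ldots, z^k$ on $M$ near $\phi(p)$ and set $\phi^j := z^j \circ \phi$, which are $J_X$-pseudoholomorphic functions whose differentials $d\phi^j$ are $(1,0)$-forms with respect to $J_X$. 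The complement of the immersion locus near $p$ is cut out by the simultaneous vanishing of all $n$-fold wedges $d\phi^{i_1}\wedge\cdots\wedge d\phi^{i_n}$ with $1\le i_1<\cdots<i_n\le k$, where $2n = \dim_\mathbb{R} X$. The pseudoholomorphic system $\bar{\partial}_{J_X}\phi^j = 0$ is elliptic of Cauchy-Riemann type, so the $\phi^j$ satisfy an Aronszajn-style unique continuation; combined with the nonvanishing of at least one top wedge at the original immersion point, this rules out an open non-immersion set inside any connected component of $X$, giving density.

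For the second assertion, every irreducible $J$-holomorphic subvariety $V_i$ is by definition the image of a somewhere immersed pseudoholomorphic map $\phi_i: X_i \to M$; by the first part $X_i$ is forced to be a compact complex manifold, so $\phi_i$ becomes a proper holomorphic map between complex manifolds, and Remmert's proper mapping theorem identifies $V_i$ as a complex analytic subvariety of $M$. The reverse implication is trivial, and matching irreducible components with multiplicities gives the equivalence. The main obstacle is the density step in the middle paragraph: translating the ellipticity of $\bar{\partial}_{J_X}\phi^j = 0$ into a genuine unique continuation statement for the top wedge $d\phi^{i_1}\wedge\cdots\wedge d\phi^{i_n}$ is delicate, since in the non-integrable setting these $(n,0)$-forms do not themselves solve a $\bar{\partial}_{J_X}$-equation, so one must instead appeal to the local structure theory of pseudoholomorphic maps into complex targets from \cite{Z2} to rule out an open locus on which the rank of $d\phi$ drops.
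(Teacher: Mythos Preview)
Your architecture matches the paper's: obtain $N_{J_X}\equiv 0$ on the immersion locus $\Omega$, then argue $\Omega$ is dense. You are right that the density step is where the work lies, and right that Aronszajn continuation for the scalar functions $\phi^j$ does not by itself control the vanishing of the top wedges $d\phi^{i_1}\wedge\cdots\wedge d\phi^{i_n}$. The paper does not leave this as a vague appeal to \cite{Z2}; it carries it out concretely. For each $m$-element coordinate subset $\sigma=\{\sigma_1,\ldots,\sigma_m\}$ of the target (with $m=\dim_{\mathbb C}X$) one forms a determinant line bundle $\mathcal L_\sigma$ over $X_U\times U_{\mathbb C^m}$ and observes that $x\mapsto(x,\phi(x),\det(d(pr_\sigma\phi)_x)_{\mathbb C})$ is itself a \emph{pseudoholomorphic} map into the total space of $\mathcal L_\sigma$. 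Theorem~3.8 of \cite{Z2} then forces the preimage of the zero section to have finite $(2m-2)$-dimensional Hausdorff measure unless it is all of $X_U$. Since $\phi$ is somewhere immersed, at least one such determinant is not identically zero, and the singular set $\mathcal S_\phi$, being contained in that zero locus, has empty interior. Your proposal gestures toward \cite{Z2} but does not supply this construction, so as written the density step remains incomplete.

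There is a second gap: the reverse implication of the final assertion is not trivial. An irreducible complex analytic subvariety $V_i\subset M$ may be singular, hence is not automatically the image of a somewhere immersed map from a \emph{smooth} compact complex manifold, which is what the definition of irreducible $J$-holomorphic subvariety requires. The paper closes this by invoking Hironaka's resolution of singularities to produce the smooth model.
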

\begin{proof}
We will use a similar method as for Theorem 5.5 of \cite{Z2}. However, since $X$ and $M$ are not equidimensional in general, we will need to do extra projections. We can cover $M$ by complex coordinate patches. Hence, we can assume our target is an open subset $U\subset \C^n$, with coordinates $(z_1, \cdots, z_n)$. Since $\phi$ is somewhere immersed, we have $\dim_{\C} X=m\le n$. Let $\{\sigma_1, \cdots, \sigma_m\}$ be a choice of $m$ elements in $\{1, \cdots, n\}$. Denote the corresponding image of $U$ by $U_{\sigma_1, \cdots, \sigma_m}$ or simply $U_{\C^m}$ if there is no confusion. Also denote the projection of $\phi|_{X_U}$ to $\C^m_{z_{\sigma_1}, \cdots, z_{\sigma_m}}$ components by $pr_{\sigma_1\cdots\sigma_m}\phi$.

For each such choice of $\{\sigma_1, \cdots, \sigma_m\}$, we look at the  projection from $\C^n$ to $m$ coordinates $(z_{\sigma_1}, \cdots, z_{\sigma_m})$. This gives us a complex vector bundle over $X_U\times U_{\C^m}$ whose fiber over $(x, pr_{\sigma_1\cdots\sigma_m}\phi(x))$ is the complex vector space of all complex linear maps $L_{\sigma_1\cdots\sigma_m}: T_xX\rightarrow \C^m$. Here $X_U$ is the set $\phi^{-1}(U)\subset X$.  By taking fiberwise complex determinant, we have a complex line bundle $\mathcal L_{\sigma_1\cdots\sigma_m}$ over $X_U\times U_{\C^m}$, whose fibers are $\det L_{\sigma_1\cdots\sigma_m}: \Lambda_{\C}^mT_xX\rightarrow \Lambda_{\C}^m\C^m\cong \C$. Its total space has a standard almost complex structure, see \cite{Z2}.

 Then the pseudoholomorphic map $\phi$ induces pseudoholomorphic maps $\phi_{\mathcal L_{\sigma_1\cdots\sigma_m}}(x)=(x, \phi(x), \det (d(pr_{\sigma_1\cdots\sigma_m}\phi)_x)_{\mathbb C})$ from $X_U$ to $\mathcal L_{\sigma_1\cdots\sigma_m}$.  Hence, the singularity subset of $\phi$ (in $X_U$) is the intersection of $\phi^{-1}_{\mathcal L_{\sigma_1\cdots\sigma_m}}(X_U\times U_{\C^m} \times \{0\})$ for all possible subsets $\{\sigma_1, \cdots, \sigma_m\}$. Applying Theorem 3.8 of \cite{Z2}, we know each $\phi^{-1}_{\mathcal L_{\sigma_1\cdots\sigma_m}}(X_U\times U_{\C^m} \times \{0\})$ is a subset with finite $2(m-1)$-dimensional Hausdorff measure if it is not the whole $X_U$. Since $\phi$ is somewhere immersed and $X$ is connected, there must be $\{\sigma_1, \cdots, \sigma_m\}$ such that $\phi^{-1}_{\mathcal L_{\sigma_1\cdots\sigma_m}}(X_U\times U_{\C^m} \times \{0\})$ is not the whole $X_U$ (but possibly it is empty). It implies the singular subset $\mathcal S_{\phi}$ has finite $2(m-1)$-dimensional Hausdorff measure. In particular, the closure of $X\setminus \mathcal S_{\phi}$ is $X$.

On $X\setminus \mathcal S_{\phi}$, the map $\phi$ is an immersion. In particular, it is local diffeomorphic to its image and thus the complex coordinates of $(M, J)$ could be pulled back by $\phi$. It implies the Nijenhuis tensor $N_{J_X}|_{X\setminus \mathcal S_{\phi}}=0$. Since  the closure of $X\setminus \mathcal S_{\phi}$ is $X$, we know $N_{J_X}=0$. That is $J_X$ is integrable.

The second statement follows from a theorem of Remmert \cite{Rem} which says that the image of a proper holomorphic map between two complex manifolds is a complex analytic subvariety of the target. Hence, it implies that if $\{(V_i, m_i), 1\le i\le m\}$ on $(M, J)$ is a $J$-holomorphic subvariety, then it is a complex analytic subvariety. The reverse direction follows from Hironaka's resolution of singularities. This completes the proof of the lemma.
\end{proof}
This proof also shows that the image of the singularity subset $\phi(\mathcal S_{\phi})$ is a complex analytic subvariety up to Question 3.9 of \cite{Z2}, and thus a $J$-holomorphic subvariety of $M$. When $\phi$ is a  general proper pseudoholomorphic map, we can still have a similar conclusion as Proposition \ref{comsubvar}, although we only have a less precise description of the image of the singularity subset.

\begin{prop}\label{imageprop}
Let $\phi: X\rightarrow M$ be a proper pseudoholomorphic map from a closed almost complex manifold $(X, J_X)$ to a complex manifold $(M, J)$. Then the image of $\phi$ is a complex analytic subvariety.
\end{prop}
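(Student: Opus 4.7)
I would induct on $n := \dim_{\mathbb{C}} X$, the base case $n = 0$ being immediate since $X$ is then a finite set of points and $\phi(X)$ is trivially a zero-dimensional analytic subvariety. For the inductive step, set $r := \max_{x \in X} \mathrm{rank}_{\mathbb{C}} (d\phi)_x$ and $U := \{x \in X : \mathrm{rank}_{\mathbb{C}}(d\phi)_x = r\}$, which is open in $X$ by lower semicontinuity of the rank. Restricting attention to one connected component at a time (finitely many, since $X$ is compact), we may assume $X$ is connected. If $r = n$, then $\phi$ is somewhere immersed and Proposition \ref{comsubvar} already gives $\phi(X)$ as a complex analytic subvariety of $M$. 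The substantive case is $r < n$, where the goal is to reduce to a source of strictly smaller complex dimension.

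On $U$, the map $\phi$ has constant complex rank $r$, so the pseudoholomorphic constant rank theorem furnishes a $J_X$-invariant distribution $\ker d\phi \subset TU$ integrating to a foliation by almost complex submanifolds of complex dimension $n - r > 0$, whose leaves are the connected components of the fibers of $\phi|_U$. Locally near any $x_0 \in U$, a pseudoholomorphic transversal slice $T_{x_0}$ of complex dimension $r$ makes $\phi|_{T_{x_0}}$ immersive at $x_0$. To globalize, I see two natural routes: (i) construct a closed almost complex submanifold $Y \subset X$ of complex dimension $r$ meeting every fiber of $\phi|_U$ transversally (via a transversality argument in the almost complex category), yielding a proper pseudoholomorphic map $\phi|_Y : Y \to M$ from a strictly lower-dimensional source with $\phi(Y) \supseteq \phi(U)$, to which the inductive hypothesis applies; or (ii) argue locally on $M$: for each $p \in \phi(X)$, cover the compact fiber $\phi^{-1}(p)$ by finitely many charts and use Proposition \ref{comsubvar} applied to local transversal slices, after suitable compactification, to obtain a local analytic description of $\phi(U)$ near $p$. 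The residual image $\phi(X \setminus U)$ is then handled by stratifying $X \setminus U$ according to the rank of $d\phi$: each stratum is an almost complex submanifold of complex dimension strictly less than $n$, so taking closures and applying the inductive hypothesis writes $\phi(X \setminus U)$ as a finite union of complex analytic subvarieties. A finite union of such subvarieties is again complex analytic, completing the induction.

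\textbf{Main obstacle.} The crux lies in the case $r < n$: the global construction of $Y$ (or, equivalently, the globalization of the local transversals) and the analysis of the rank-drop locus $X \setminus U$. Unlike the complex analytic setting, where $X \setminus U$ is itself a complex analytic subvariety and a Stein factorization is available, here $X \setminus U$ is only a real-analytic stratified set whose strata are almost complex submanifolds; passing to closures and invoking the inductive hypothesis must be done with care to preserve both the almost complex structure and the closedness hypothesis required by Proposition \ref{comsubvar}. A secondary point is to verify that the local analytic descriptions produced by the transversal slices match on overlaps, which is ensured by the fact that each is uniquely determined by the image of $\phi$ and that analyticity is a local condition on $M$.
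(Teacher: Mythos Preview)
Your plan has a genuine structural gap in the case $r<n$, and it is precisely the point you flag as the ``main obstacle''. Both routes (i) and (ii) hinge on producing almost complex transversal slices $T_{x_0}\subset X$ of complex dimension $r$, but in the non-integrable setting such submanifolds need not exist when $1<r<n$: the Nijenhuis tensor of $J_X$ obstructs the existence of almost complex submanifolds of intermediate dimension, so there is no ``pseudoholomorphic constant rank theorem'' supplying a $J_X$-holomorphic transversal. Even granting local slices, they are open (non-compact), so neither Proposition~\ref{comsubvar} nor the inductive hypothesis---both of which require a \emph{closed} almost complex source---applies; the phrase ``suitable compactification'' hides the entire difficulty, since compactifying an almost complex manifold while keeping the map pseudoholomorphic is not a formal operation. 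The treatment of $X\setminus U$ has the same defect: rank strata are not in general smooth almost complex submanifolds (the tangent spaces of the strata need not be $J_X$-invariant), and their closures are certainly not closed almost complex manifolds, so you cannot feed them back into the induction.

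The paper avoids all of this by working on the \emph{target} rather than the source. It shows (via a jet-bundle unique continuation argument) that the top-rank set is open and dense, so that by the smooth rank theorem $\phi(X\setminus A_r)$ is locally a complex submanifold of pure dimension $\mathrm{rk}(\phi)$ in $M$. Sard's theorem (in the Hausdorff-measure form) gives $\mathcal H^{2\mathrm{rk}(\phi)-1}(\phi(A_r))=0$ for the critical image, and then Shiffman's extension theorem closes up $\phi(X\setminus A_r)$ across this small set to an honest analytic subvariety. This bypasses entirely the need for almost complex transversals, compactification, or any induction on $\dim_{\mathbb C}X$.
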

\begin{proof}
We look at the differential of $d\phi: TX\rightarrow TM$ and view it as a linear transformation at each point of $X$. Since $\phi$ is a pseudoholomorphic map, the rank of this linear transformation at each point is an even integer no greater than $\min\{2\dim_{\C}X, 2\dim_{\C}M\}$. Define the rank of $\phi$ by $\rk(\phi)=\frac{1}{2}\sup_{x\in X}\rk(d\phi_x)$. We call any point $x\in X$ with $\rk(\phi)=\frac{1}{2}\rk(d\phi_x)$ a regular point and any other point a critical point.

We apply a result of Sard \cite{Sar}: If $f: N_1\rightarrow N_2$ is a $C^k$ map for $k\ge \max\{n-m+1, 1\}$ and if $A_r\subset N_1$ is the set of points $x\in N_1$ such that $df_x$ has rank no greater than $r$, then the $(r+\ep)$-dimensional Hausdorff measure of $\phi(A_r)$ is zero, for any $\ep>0$.

In our setting, we choose $r=2\rk(\phi)-2$. Thus $A_r$ is the set of critical points. In particular, the above version of Sard's theorem implies that the Hausdorff measure $\mathcal H^{2\rk(\phi)-1}(\phi(A_r))=0$.

On the other hand, the set of  regular points is an open dense subset in $X$. The openness is apparent and the denseness means $\overline{X\setminus A_r}=X$. To show the denseness, we look at the $\rk(\phi)^{th}$ wedge product of both the complex vector bundles $TX$ and $TM$, denoted by $\Lambda_{\C}^{\rk(\phi)}TX$ and $\Lambda_{\C}^{\rk(\phi)}TM$. Then the $1$-jet bundle $\mathcal E=J^1(X, M)$ induces a complex vector bundle $\mathcal E^{\rk(\phi)}$ (over $X\times M$) whose fibers are induced complex linear maps from $\Lambda_{\C}^{\rk(\phi)}T_xX$ to $\Lambda_{\C}^{\rk(\phi)}T_{\phi(x)}M$. Its total space inherits a standard almost complex structure from the standard almost complex structure $\mathcal J$ of $J^1(X, M)$. The pseudoholomorphic map $\phi$ induces a pseudoholomorphic section $(x, \phi(x), (d\phi_x)_{\C})$ of $J^1(X, M)$ and in turn a pseudoholomorphic section of $\mathcal E^{\rk(\phi)}$. The zero locus of this section is just $A_r$. By the extension of unique continuation for pseudoholomorphic submanifolds, {\it i.e.} Proposition 2.3 of \cite{Z2}, we know $A_r$ cannot contain an open subset of $X$ since it is the intersection of two almost complex submanifolds, {\it i.e.} the image of the zero section and the induced section $(x, \phi(x), \Lambda^{\rk(\phi)}_{\C}(d\phi_x)_{\C})$. Since $X$ is a manifold, we have $\overline{X\setminus A_r}=X$.

Applying the rank theorem (see {\it e.g.} \cite{Lee}), we know the image of an open neighborhood of $x\in X\setminus A_r$ is diffeomorphic to $\R^{2\rk(\phi)}=\R^{r+2}$. Since $\phi$ is pseudoholomorphic, the image of the open neighborhood is an (almost) complex submanifold of $M$. Hence, it is $\C^{\rk(\phi)}$. In particular, the image $\phi(X\setminus A_r)$ is a complex analytic variety. Then Lemma \ref{imageprop} follows from a  theorem of Shiffman \cite{Sh} (see also \cite{Ki}) by taking $k=\rk(\phi)$.

\begin{thm}[Shiffman]\label{Shiff}
Let $U$ be open in $\C^n$ and let $E$ be closed in $U$. Let $X$ be a pure $k$-dimensional complex analytic subvariety in $U\setminus E$, and let $\bar X$ be the closure of $X$ in $U$. If $\mathcal H^{2k-1}(E)=0$ then $\bar X$ is a pure $k$-dimensional complex analytic subvariety in $U$.
\end{thm}

\end{proof}

If we study a rational map from a closed almost complex manifold $(X, J_X)$ to a complex manifold $(M, J)$, {\it i.e.} a proper pseudoholomorphic map $\phi: X\setminus B\rightarrow Y$ where $B$ has Hausdorff codimension at least two, we know the image $\phi(X\setminus B)$ has the structure of analytic subvariety locally near any point in the image following from the argument of Proposition \ref{imageprop}. By abuse of notation, we call the image an open analytic subvariety.

\subsection{Pseudoholomorphic structures}
In this subsection, we recall the one-to-one correspondence of  pseudoholomorphic structures and bundle almost complex structures on a complex vector bundle $E$ over the almost complex manifold $(X, J)$ as established in \cite{CZ, DeT}.

A {\it pseudoholomorphic structure} on $E$ is given by a differential operator $\bar{\partial}_E: \Gamma(X, E)\rightarrow  \Gamma(X, (T^*X)^{0,1}\otimes E)$ which satisfies the Leibniz rule $$\bar{\partial}_E (fs)=\bar{\partial}f\otimes s+f\bar{\partial}_Es$$ where $f$ is a smooth function and $s$ is a section of $E$. A particularly important pseudoholomorphic structure is the natural one $\bar{\partial}_m: \Gamma(X, \mathcal K_X^{\otimes m})\rightarrow (T^*X)^{0,1}\otimes \Gamma(X, \mathcal K_X^{\otimes m})$ for $ m\geq 2$ on the pluricanonical bundle  $\mathcal K_X^{\otimes m}$ induced from the $\bar{\partial}$ operator inductively by the product rule
$$\bar{\partial}_m(s_1\otimes s_2)=\bar{\partial}s_1\otimes s_2+s_1\otimes \bar{\partial}_{m-1} s_2.$$

For any Hermitian bundle $(E, h_E)$ with a pseudoholomorphic structure $\bar{\p}_E$, there is a unique Hermitian connection $\nabla$ so that its $(0,1)$-component $\nabla^{(0,1)}=\bar{\partial}_E$ (see Lemma 3.3 in \cite{CZ}).

We can define a unique dual pseudoholomorphic structure $\bar{\p}_{E^*}: \Gamma(X, E^*)\rightarrow \Gamma(X, (T^*X)^{0,1}\otimes E^*)$ on the dual bundle $E^*$: for any section $s^*\in \Gamma(X, E^*)$ and any section $s'\in \Gamma(X, E)$, let
\begin{align}\label{L} (\bar{\p}_{E^*}(s^*))(s')=\bar{\p} (s^*(s'))-s^*(\bar{\p}_E(s')). \end{align}
It is easy to verify that $\bar{\p}_{E^*}$ satisfies the Leibniz rule, giving a pseudoholomorphic structure.

On the other hand, a {\it bundle almost complex structure} as in \cite{DeT} is an almost complex structure $\mathcal J$ on $TE$ so that
\begin{enumerate}[(i)]
 \item the projection is $(\mathcal J, J)$-holomorphic,
 \item  $\mathcal J$ induces the standard complex structure on each fiber, i.e. multiplying by $i$,
 \item  the fiberwise addition $\alpha: E\times_M E\rightarrow E$ and the fiberwise multiplication by a complex number $\mu:\C\times E\rightarrow E$ are both pseudoholomorphic.
 \end{enumerate}

It is shown in \cite{DeT} that a bundle almost complex structure $\mathcal J$ on $E$ determines a pseudoholomorphic structure $\bar{\p}_{\mathcal J}$, and the map $\mathcal J\mapsto  \bar{\p}_{\mathcal J}$ is a bijection between the spaces of bundle almost complex structures and pseudoholomorphic structures on $E$. Furthermore, it is shown as Corollary 2.4 in \cite{CZ} that  $\bar{\p}_{\mathcal{J}}s=0$ if and only if $s$ is $(J, \mathcal{J})$ holomorphic for any $s\in \Gamma(X, E)$. We denote this space of pseudoholomorphic sections by $H^0(X, (E, \mathcal J))$. In particular, the space of pseudoholomorphic sections of $(\mathcal K_X^{\otimes m}, \bar{\partial}_m)$ is denoted by
$H^0(X, \mathcal K_X^{\otimes m})$.

These two equivalent ways of description of a pseudoholomorphic section have their respective use. The PDE description was used to show the dimension of pseudoholomorphic sections is finite, while the geometric description bridges to the intersection theory of almost complex submanifolds \cite{Z2}. In particular, when $E$ is a complex line bundle over $4$-manifold $(X, J)$, the zero locus of a pseudoholomorphic section $s$ is a $J$-holomorphic $1$-subvariety in class $c_1(E)$.

\subsection{Pluricanonical maps}
One can still define pluricanonical maps for an almost complex manifold $(X, J)$. More generally, for any complex line bundle $L$ with a bundle almost complex structure $\mathcal J$, $\Phi_{L, \mathcal J}: X\rightarrow \mathbb CP^{N}$ is defined as $\Phi_{L, \mathcal J}(x)=[s_0(x): \cdots :s_{N}(x)]$, where $s_i$ constitute a basis of the linear space $H^0(X, (L, \mathcal J))$.
When $L=\mathcal K_X^{\otimes m}$ and $\mathcal J=\mathcal J_J$ induced by $\bar{\p}_m$, $\Phi_{L, \mathcal J}$ is denoted by $\Phi_m$ and called the pluricanonical map. This map is well defined because the transition map for a complex line bundle takes value in $\text{GL}(1, \mathbb C)$. Hence, changing the local coordinates would lead to a common multiple of all $s_i$.

Furthermore, $\Phi_{L, \mathcal J}$ is a pseudoholomorphic map from $(X, J)$ to $(\mathbb CP^{N}, J_{std})$. It can be seen from each map $\Phi_{L, \mathcal J}^{ij}(x)= \frac{s_j}{s_i}(x)$. Each $\Phi_{L, \mathcal J}^{ij}$ is a section of the trivial bundle over $X\setminus \{s_i^{-1}(0)\}$.
The map $\frac{1}{s_i(x)}$  is a pseudoholomorphic section of the dual bundle $L^*$ over $X\setminus \{s_i^{-1}(0)\}$ with respect to the  pseudoholomorphic structure on $L^*$ dual to $\bar{\p}_{\mathcal J}$, by applying \eqref{L} to the relation $1=s_i\cdot \frac{1}{s_i}$.  Apply \eqref{L} again, $\Phi_{L, \mathcal J}^{ij}$ is a pseudoholomorphic section of the trivial bundle over $X\setminus \{s_i^{-1}(0)\}$. Hence, it implies that $\Phi_{L, \mathcal J}: X\setminus B\rightarrow \mathbb CP^N$ is a pseudoholomorphic map. Here $B\subset X$ is the set of the base locus, {\it i.e.} the set of points $x\in X$ such that $s_i(x)=0$ for all $0\le i\le N$.

We are able to use the image $\Phi_L(X\setminus B)$, in particular its dimension, to study the almost complex manifold $(X, J)$. By the argument of Proposition \ref{imageprop}, we know the image is an open analytic subvariety. Apparently, $\dim_{\C}\Phi_L(X\setminus B)\le \dim_{\C} X$. Moreover, we have the following.

\begin{thm}\label{Koddimmax}
Let $(X, J)$ be a connected almost complex manifold and $L$ be a complex line bundle with a bundle almost complex structure. If $\dim_{\C}\Phi_L(X\setminus B)=\dim_{\C} X$, then $J$ is integrable. If $P_L(X, J)=\dim H^0(X, (L, \mathcal J))\ge 2$, then $\dim_{\C}\Phi_L(X\setminus B)\ge 1$.
\end{thm}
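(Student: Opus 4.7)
The plan is to treat the two parts in turn. For part (1), since $\Phi_{L,\mathcal J}\colon X\setminus B\to \mathbb{CP}^N$ is a pseudoholomorphic map into a complex manifold whose image has complex dimension $\dim_{\C}X$, the differential $d\Phi_{L,\mathcal J}$ must attain maximal rank $2\dim_{\C}X$ at some point, so $\Phi_{L,\mathcal J}$ is somewhere immersed on $X\setminus B$. I would then replay the argument of Proposition \ref{comsubvar} with $X$ replaced by $X\setminus B$: the construction of the bundles $\mathcal L_{\sigma_1\cdots\sigma_m}$ together with Theorem 3.8 of \cite{Z2} show that the singular locus $\mathcal S_{\Phi_{L,\mathcal J}}\subset X\setminus B$ has locally finite $(2\dim_{\C}X-2)$-dimensional Hausdorff measure, so the immersion points form an open dense subset of $X\setminus B$. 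At any such point, local complex coordinates on $\mathbb{CP}^N$ pull back via $\Phi_{L,\mathcal J}$ to local complex coordinates compatible with $J$, so the Nijenhuis tensor $N_J$ vanishes there. Continuity of $N_J$ then gives $N_J\equiv 0$ on $X\setminus B$, and once one knows $X\setminus B$ is dense in $X$ the same continuity yields $N_J\equiv 0$ on $X$. Density is automatic: since $P_{L,\mathcal J}\ge 1$ there is a nonzero section $s_i$ whose zero locus has empty interior by unique continuation for pseudoholomorphic sections (Proposition 2.3 of \cite{Z2}), hence $B=\bigcap_i\{s_i=0\}$ has empty interior.

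For part (2), I would argue by contradiction. Suppose $\dim_{\C}\Phi_{L,\mathcal J}(X\setminus B)=0$; then the image, being a $0$-dimensional open analytic subvariety of $\mathbb{CP}^N$, is a discrete subset. The zero locus of any nonzero pseudoholomorphic section of a line bundle has real codimension at least $2$, so $B$ has real codimension at least $2$ in the connected manifold $X$, and hence $X\setminus B$ is connected. Therefore $\Phi_{L,\mathcal J}(X\setminus B)$ is a connected discrete set, so a single point $[c_0:\cdots:c_N]$. This forces $c_j s_i(x)-c_i s_j(x)=0$ for every $x\in X\setminus B$ and every $i,j$, and by continuity the identity extends to $X$. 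Picking any index $j$ with $c_j\ne 0$ makes every $s_i$ a constant multiple of $s_j$, contradicting the linear independence of a basis of $H^0(X,(L,\mathcal J))$ of dimension at least $2$.

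I expect the main subtlety to lie in part (1): adapting the proof of Proposition \ref{comsubvar} to the non-compact source $X\setminus B$ and then pushing Nijenhuis vanishing across $B$. The first adaptation should be routine, because the argument in \cite{Z2} is local near an immersion point and relies only on Hausdorff-measure bounds on the singular set, which remain valid on open submanifolds. The second step reduces to showing $B$ is nowhere dense, which is an immediate consequence of unique continuation for pseudoholomorphic sections. The connectedness of $X\setminus B$ needed in part (2) comes from the same real-codimension statement for zero loci of line-bundle sections, so both parts ultimately rest on this shared geometric input.
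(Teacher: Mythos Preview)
Your proposal is correct and follows essentially the same route as the paper. For part (1) the paper does exactly what you describe: it uses unique continuation (Proposition 2.3 of \cite{Z2}) to see that $B$ and the singular set of $\Phi_{L,\mathcal J}$ contain no open subset, so the immersion locus $X_{reg}$ is dense, and then pulls back complex coordinates to kill $N_J$ on $X_{reg}$ and hence on $X$. For part (2) you arrive at the same conclusion by a slightly different bookkeeping: you invoke the codimension-two fact (Theorem 3.8 of \cite{Z2}) to get $X\setminus B$ connected and hence the image a single point, whereas the paper observes directly that a zero-dimensional image makes $s_j-ks_i$ vanish on an open set and then applies unique continuation; the two arguments are equivalent and rest on the same underlying input.
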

\begin{proof}
If $\Phi_L(X\setminus B)$ is the set of finitely many points, then we know that for any two pseudoholomorphic sections $s_1, s_2$ of $L$, we have $s_2-ks_1=0$ in an open subset of $X$ for some constant $k$. However, it implies $s_2-ks_1\equiv 0$ by Proposition 2.3 of \cite{Z2}. Hence if $P_L(X, J)\ge 2$,  the image has $\dim_{\C}\Phi_L(X\setminus B)\ge 1$.

When $\dim_{\C}\Phi_L(X\setminus B)\ge 1$, by the unique continuation for pseudoholomorphic submanifolds,  Proposition 2.3 of \cite{Z2}, both the base locus $B$ and the singularity subset of $\Phi_L$ cannot contain an open subset of $X$. Moreover, since the singularities of the image $\Phi_L(X\setminus B)$ has complex codimension at least $1$ in $\Phi_L(X\setminus B)$, its preimage also does not contain an open subset of $X$. We now look at the complement of these subsets of $X$, which is called the regular part $X_{reg}$ by abuse of terminology.

When $\dim_{\C}\Phi_L(X\setminus B)=2$, we know the pseudoholomorphic map $\Phi_L$ maps $X_{reg}$ diffeomorphically to its image. Hence $J|_{X_{reg}}$ is integrable, which in turn implies $J$ is integrable on whole $X$ since $X=\overline{X_{reg}}$.
\end{proof}

As a result, we can define another version of Iitaka dimension for almost complex manifolds.

\begin{defn}
The Iitaka dimension $\kappa_J(X, (L, \mathcal J))$ of a complex line bundle $L$ with bundle almost complex structure $\mathcal J$ over $(X, J)$ is defined as
$$\kappa_J(X, (L, \mathcal J))=\begin{cases}\begin{array}{cl} -\infty, &\ \text{if} \ P_{L^{\otimes m}}=0\  \text{for any} \ m\geq 0\\
\max\dim \Phi_{L^{\otimes m}} (X\setminus B_{m, L, \mathcal J}), &\  \text{otherwise.}\end{array}
\end{cases}$$

The Kodaira dimension $\kappa_J(X)$ is defined by choosing $L=\mathcal K_X$ and $\mathcal J$ to be the bundle almost complex structure induced by $J$.
\end{defn}

In Theorem 5.3 of \cite{CZ}, we have proved that there is a one-to-one correspondence between elements of $H^0(X, \mathcal K_X^{\otimes m})$ and $H^0(Y, \mathcal K_Y^{\otimes m})$ when there is a pseudoholomorphic degree one map between closed almost complex $4$-manifolds $(X, J)$ and $(Y, J_Y)$. Moreover, these elements coincide at an open subset of each $X$ and $Y$ whose complements are proper subvarieties of $X$ and $Y$ respectively. Hence, it follows that $\kappa_J(X)$ is a birational invariant.

\begin{thm}\label{Kodbir'}
Let $u: (X, J)\rightarrow (Y, J_Y)$ be a degree one pseudoholomorphic map between closed almost complex $4$-manifolds. Then  $\kappa_{J}(X)= \kappa_{J_Y}(Y)$.
\end{thm}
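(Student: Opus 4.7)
The plan is to exploit the bijection between pseudoholomorphic pluricanonical sections already established in Theorem 5.3 of \cite{CZ} to show that the pluricanonical maps $\Phi_m^X$ and $\Phi_m^Y$ are related by composition with $u$ on a large open subset, and then deduce the equality of the maximal image dimensions.

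First, I would fix $m$ and use Theorem 5.3 of \cite{CZ} to obtain a linear bijection $u^{*}: H^0(Y, \mathcal K_Y^{\otimes m}) \to H^0(X, \mathcal K_X^{\otimes m})$. Pick a basis $t_0,\ldots,t_N$ of $H^0(Y, \mathcal K_Y^{\otimes m})$ and let $s_i = u^* t_i$ be the corresponding basis of $H^0(X, \mathcal K_X^{\otimes m})$ (in particular $P_m(X,J)=P_m(Y,J_Y)$). By the second part of that theorem, there are open subsets $X_0 \subset X$ and $Y_0 \subset Y$ with proper subvariety complements such that $s_i(x) = t_i(u(x))$ for all $x \in X_0$, and $u(X_0) \subset Y_0$. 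In particular, on $X_0$ the base loci satisfy $B_X \cap X_0 = u^{-1}(B_Y) \cap X_0$, so on $X_0 \setminus B_X$ one has the identity of pseudoholomorphic maps
\begin{equation*}
\Phi_m^X(x) = [s_0(x):\cdots:s_N(x)] = [t_0(u(x)):\cdots:t_N(u(x))] = \Phi_m^Y \circ u(x).
\end{equation*}

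Next I would compare image dimensions. Since $u$ is a proper pseudoholomorphic degree one map between closed $4$-manifolds, it is surjective and restricts to a diffeomorphism on an open dense subset of $X$; moreover $u(X_0)$ is dense in $Y$. Combined with the relation $\Phi_m^X = \Phi_m^Y \circ u$ on $X_0 \setminus B_X$, the two images $\Phi_m^X(X_0 \setminus B_X)$ and $\Phi_m^Y(u(X_0) \setminus B_Y)$ coincide. By Proposition \ref{imageprop} (and the open analytic subvariety remark following it), $\Phi_m^X(X\setminus B_X)$ and $\Phi_m^Y(Y\setminus B_Y)$ are open analytic subvarieties, and the parts removed by restricting to $X_0$, respectively $Y_0$, are contained in proper analytic subvarieties of the target, hence cannot lower the complex dimension of the image. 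Therefore
\begin{equation*}
\dim_{\C} \Phi_m^X(X\setminus B_X) = \dim_{\C} \Phi_m^Y(Y\setminus B_Y).
\end{equation*}

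Taking the maximum over $m$, together with the already noted equality $P_m(X,J)=P_m(Y,J_Y)$ which ensures that the ``$-\infty$'' case occurs on $X$ iff it occurs on $Y$, yields $\kappa_J(X) = \kappa_{J_Y}(Y)$.

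The only subtle point, and the one I would check most carefully, is the last dimension comparison: I must justify that discarding the proper analytic subvariety complements of $X_0$ and $Y_0$ does not drop the dimension of the image. This follows because $\Phi_m^X(X\setminus B_X)$ is an irreducible open analytic subvariety (coming from the connected source $X$), so any nonempty Zariski-open piece of it has the same dimension; the same applies on the $Y$ side. With that observation in place, the argument reduces to the basic fact that composing with a surjective map of relative dimension zero preserves image dimension, which is the content of the identity $\Phi_m^X = \Phi_m^Y \circ u$ on $X_0 \setminus B_X$.
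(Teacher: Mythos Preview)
Your proposal is correct and follows essentially the same approach as the paper: the paper simply remarks that Theorem~5.3 of \cite{CZ} gives a bijection $H^0(X,\mathcal K_X^{\otimes m})\cong H^0(Y,\mathcal K_Y^{\otimes m})$ whose elements coincide on open subsets with proper subvariety complements, and says the birational invariance of $\kappa_J$ ``follows'' from this. Your write-up is a faithful and more detailed unpacking of that sentence, supplying the identity $\Phi_m^X=\Phi_m^Y\circ u$ on the good open set and the density/open-analytic-subvariety argument needed to conclude equality of image dimensions.
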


It essentially follows from Theorem 6.10 in \cite{CZ} that all the values $\{-\infty, 0, 1, \cdots, n-1\}$ could be realized by $\kappa_J(X)$ when $(X, J)$ is non-integrable. It is clear for values $-\infty$ and $0$. For examples with $\kappa_J\ge 1$, we look at the non-integrable almost complex structures on $T^2\times S$ in Section 6.3 of \cite{CZ}. By Lemma 6.5 in  \cite{CZ}, the pluricanonical maps are base-point-free and have the same image as that of a Riemann surface $S$ with $g\ge 2$. In particular, it has dimension $1$.  By taking product of this $T^2\times S$ with other Riemann surfaces $T^2$ and $\Sigma_g$, we know the image of pluricanonical map is the product of Riemann surfaces by the K\"unneth formula, Proposition 6.8 in \cite{CZ}. This gives us all the possible values of $\kappa_J$.

Below is a structural result for almost complex $4$-manifolds with $\kappa_J=1$. It shows that these manifolds are in fact generalizations of elliptic surfaces.

\begin{thm}\label{ellsur}
Let $(X, J)$ be an almost complex $4$-manifolds admitting a base-point-free pluricanonical map.
If $\kappa_J(X)=1$, then $X$ admits a pseudoholomorphic fibration with each fiber an elliptic curve and finitely many singular fibers.
\end{thm}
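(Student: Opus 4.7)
The plan is to use the pluricanonical map $\Phi_m$ itself (after normalization and Stein factorization of its target) as the fibration, and then pin down the genus of a generic fiber by adjunction. First, since $\Phi_m$ is base-point-free and $X$ is compact, $\Phi_m : X \to \mathbb{CP}^N$ is a proper pseudoholomorphic map. Proposition \ref{imageprop} identifies $C := \Phi_m(X)$ as a complex analytic subvariety of $\mathbb{CP}^N$, and the assumption $\kappa_J(X)=1$ forces $\dim_{\C} C = 1$, so $C$ is a (possibly singular) algebraic curve. Let $\nu : \tilde{C} \to C$ be its normalization, which is a smooth compact Riemann surface. Over the open dense set where $C$ is smooth, the map $\Phi_m$ lifts uniquely, and using properness one extends this lift across the finitely many singular points of $C$ to obtain a proper pseudoholomorphic map $\tilde{\Phi}_m : X \to \tilde{C}$.

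Next I pass to the Stein factorization $X \xrightarrow{f} B \xrightarrow{\pi} \tilde{C}$ of $\tilde{\Phi}_m$, where $f$ has connected fibers and $\pi$ is a finite branched cover onto $\tilde{C}$; the Riemann surface $B$ is built by tracking connected components of generic fibers of $\tilde{\Phi}_m$ over $\tilde{C}$ and filling in the discrete critical locus. This $f$ will be the sought pseudoholomorphic fibration. By Sard's theorem together with the unique continuation results from \cite{Z2} used already in Proposition \ref{imageprop}, for generic $b \in B$ the fiber $F := f^{-1}(b)$ is a smooth connected pseudoholomorphic curve in $X$. Two generic fibers are disjoint and homologous through the family over $B$, so $F \cdot F = 0$.

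The key computation is $K_X \cdot F = 0$, and this is exactly where base-point-freeness is used. Writing $\Phi_m = [s_0 : \cdots : s_N]$ in a basis of $H^0(X, \mathcal K_X^{\otimes m})$, the fiber $F$ maps to a single point $[a_0:\cdots:a_N]$ with some $a_i \neq 0$; for that $i$, the zero locus $Z(s_i) = \Phi_m^{-1}\{z_i = 0\}$ is disjoint from $F$, so $s_i|_F$ is a nowhere vanishing pseudoholomorphic section of $\mathcal K_X^{\otimes m}|_F$. Hence $\mathcal K_X^{\otimes m}|_F$ is holomorphically trivial and $K_X \cdot F = 0$. Adjunction for smooth pseudoholomorphic curves in an almost complex $4$-manifold then yields $2g(F) - 2 = K_X \cdot F + F \cdot F = 0$, so $F$ is an elliptic curve. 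The fibers of $f$ can fail to be smooth only over the critical values of $f$, which form a proper analytic subvariety of the Riemann surface $B$ and are therefore a finite set, giving finitely many singular fibers.

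The hardest step will be the Stein factorization: the sheaf-theoretic construction used when $X$ is complex is not available for merely almost complex $X$, so one must instead build $B$ geometrically as a moduli space of connected components of fibers of $\tilde{\Phi}_m$, fill in branching at the discrete critical locus, and verify that the resulting map $f : X \to B$ is genuinely pseudoholomorphic (smoothness of $B$ and pseudoholomorphicity of $f$ away from the branch locus are local issues; extending across branch points uses properness and the fact that $B$ has complex dimension one). Once this is granted, the remaining ingredients, namely base-point-freeness implying triviality of $\mathcal K_X^{\otimes m}|_F$, adjunction on the almost complex $4$-manifold, and Sard for pseudoholomorphic maps, are clean applications of the theory built up in \cite{Z2} and earlier in this paper.
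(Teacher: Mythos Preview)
Your approach is essentially the same as the paper's: both factor the base-point-free pluricanonical map through a smooth curve, apply adjunction to the generic fiber (the paper via $[C_p]$ being a rational multiple of $K_X$, you via trivialization of $\mathcal K_X^{\otimes m}|_F$), and then build the connected-fiber map by a geometric Stein factorization constructed from connected components of fibers. The only notable difference is that the paper argues directly that the image $\Phi_m(X)$ is already a smooth Riemann surface, bypassing your normalization step.
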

\begin{proof}
Let $\Phi_m$ be the base-point-free pluricanonical map. Then by Proposition \ref{imageprop}, the image $\Phi_m(X)$ is a projective curve in a projective space.
Notice the intersection number argument to prove Theorem 1.2 of \cite{Z2} still works when the target space is a pseudoholomorphic curve. Hence the preimage $\Phi_m^{-1}(p)$, $p\in \Phi_m(X)$, is a $J$-holomorphic $1$-subvariety. Then $\Phi_m(X)$ is in fact a Riemann surface since otherwise at a singular point $p\in \Phi_m(X)$, a point where the fiber $\Phi_m^{-1}(p)$ is smooth is a singular point of $X$.

By the argument of Proposition \ref{comsubvar}, the set of critical values of the map $\Phi_m$ is an analytic subvariety of $\Phi_m(X)$ of (real) codimension at least $2$. In our situation, it has to be a set of finitely many points.

For a fiber $C_p=\Phi_m^{-1}(p)$, it is the zero locus of a pluricanonical section. Hence $[C_p]=mK_X$. Moreover, because $\Phi_m$ is base-point-free, any two fibers are disjoint. Hence, $[C_p]^2=0=K_X\cdot [C_p]$. By adjunction formula, $C_p$ is a $J$-holomorphic $1$-subvariety whose connected components are possibly multiple elliptic curves. Moreover, for generic $p$, $C_p$ is a disjoint union of smooth elliptic curves.

We look at the collection of all connected components $C_{p, i}$ of all the fibers $C_p$. It is given the natural topology by Hausdorff convergence. It could also be endowed with a natural Riemann surface structure. Without loss of generality, we can assume a generic fiber has multiplicity one. If a connected component $C_{p, i}$ has multiplicity one, then choose a holomorphic disk intersect it transversely at a smooth point. It will intersect at nearby fibers $C_{p'}$ uniquely at one point and hence it is locally homeomorphic to $\Phi_m(X)$ near $p$. If the connected component $C_{p, i}$ has multiplicity $l>1$, we know for any nearby $q$, there will be $l$ smooth elliptic curves in $C_q$ approaching to $(|C_{p, i}|, l)$ since critical value $p$ is isolated. Hence any disk intersecting $C_{p,i}$ transversely at a smooth point will also intersect transeversely the $l$ smooth elliptic curves in $C_q$ approaching to $(|C_{p, i}|, l)$ for any nearby $q$.
Hence, the local model is a  covering of disk over disk branched at origin, {\it i.e.} $z\mapsto z^l$.

In summary, we have a Riemann surface $\Sigma$ and a branched covering $\pi: \Sigma\rightarrow \Phi_m(X)$ such that the pluricanonical map $\Phi_m$ is factored through $\Sigma$ as $\pi\circ \Phi_m'$. Moreover, each fiber of the pseudoholomorphic map $\Phi_m': X\rightarrow \Sigma$ is a (connected) elliptic curve.
\end{proof}

This result is complementary to Theorem 6.7 in \cite{CZ}. We expect to classify almost complex $4$-manifolds with $\kappa_J(X)=\kappa^J(X)=1$.

The two versions $\kappa^J$ and $\kappa_J$ are equal to each other and equal to the original Iitaka dimension when $J$ is integrable and $L$ is a holomorphic line bundle.
\begin{que}\label{k=k}
Is $\kappa_J(X, (L, \mathcal J))=\kappa^J(X, (L, \mathcal J))$ for a complex line bundle $L$ with bundle almost complex structure $\mathcal J$ over $(X, J)$?
\end{que}

In general, we even do not know whether $\kappa^J$ is an integer. In dimension $4$, Question \ref{k=k}  is equivalent to: if $\kappa_J(X, (L, \mathcal J))=1$, is it true that $\kappa^J(X, (L, \mathcal J))=1$?

As in the complex setting, we could also study the canonical model of an almost complex manifold by the graded ring construction. When the $(L, \mathcal J)$ sections $H^0(X, (L, \mathcal J))$ is base-point-free, we know $\Phi_{L, \mathcal J}(X)$ is an analytic subvariety by Proposition \ref{imageprop} and hence a projective subvariety by Chow's Theorem.

When $H^0(X, (L, \mathcal J))$ has base points, we are able to show that it is still true in dimension $4$.

\begin{prop}
\label{pcm}
Let $\phi$ be a pseudoholomorphic map from $X\setminus B$ to a closed complex manifold $(M, J)$, where $(X, J_X)$ is a closed almost complex $4$-manifold, $(M, J)$ a closed complex manifold, $B\subset X$ a set which is the intersection of  $\psi_i^{-1}(Z_i)$ for pseudoholomorphic maps $\psi_i: X\rightarrow M_i$ where $Z_i$ is an almost complex submanifold of $M_i$. Then $\overline{\phi(X\setminus B)}$ is an analytic subvariety of $M$.
\end{prop}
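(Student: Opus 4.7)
The plan is to reduce the claim to Shiffman's extension theorem (Theorem \ref{Shiff}) applied inside $M$.

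First, applying the argument of Proposition \ref{imageprop} locally to $\phi : X\setminus B \to M$ --- specifically, the rank theorem on the regular set combined with Sard's theorem on the critical values --- shows that $\phi(X\setminus B)$ is, in a neighborhood of each of its points in $M$, an analytic subvariety of pure complex dimension $k := \rk(\phi)$. The properness hypothesis of Proposition \ref{imageprop} is needed only to pass to the closure globally, not for this local structural conclusion.

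Second, I would control the added boundary $E := \overline{\phi(X\setminus B)}\setminus\phi(X\setminus B) \subset M$. The hypothesis $B = \bigcap_i \psi_i^{-1}(Z_i)$ combined with the intersection theory of \cite{Z2} implies that $B$ is contained in a $J_X$-holomorphic subvariety of $X$ of complex dimension at most $\dim_{\mathbb C}X - 1 = 1$. After absorbing any complex-codimension-$1$ component of $B$ (across which the bounded pseudoholomorphic map $\phi$ extends by a Riemann-type extension theorem in the almost complex setting), we may assume $B$ is a finite set of points. For each $b\in B$ the accumulation set $A_b = \{\lim \phi(x_n) : x_n \to b\}$ is compact in $M$; rescaling $\phi$ on shrinking topological spheres around $b$ and applying Gromov-compactness-type analysis shows $A_b$ is contained in a pseudoholomorphic $1$-subvariety of $M$, so $\dim_{\mathbb R} A_b \le 2$. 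Since $B$ is finite, $E = \bigcup_{b\in B} A_b$ has $\dim_{\mathbb R} E \le 2$, which gives $\mathcal H^{2k-1}(E)=0$ when $k\ge 2$. When $k=1$, the image is open in a pseudoholomorphic curve and $E$ reduces to a finite point set, again with $\mathcal H^1(E)=0$.

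Applying Theorem \ref{Shiff} in each coordinate chart of $M$ then yields that $\overline{\phi(X\setminus B)}$ is an analytic subvariety of $M$. The main technical hurdle is the rigorous bound on $\dim_{\mathbb R}A_b$ in the almost complex category. Should that direct analysis prove delicate, a cleaner alternative is to blow up $X$ iteratively at the points of $B$ within the almost complex category, extend $\phi$ across the exceptional divisors to a proper pseudoholomorphic map on the resulting compact resolution $\tilde X \to X$, and invoke Proposition \ref{imageprop} on that resolution directly; the image equals $\overline{\phi(X\setminus B)}$ because the blow-up is an isomorphism over $X\setminus B$.
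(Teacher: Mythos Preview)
Your outline has the right target (Shiffman) but several intermediate steps are genuine gaps, and the paper proceeds quite differently.

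The paper argues by cases on $k=\dim_{\mathbb C}\phi(X\setminus B)$. The case $k=0$ is trivial. The case $k=2$ is the one you spend the most effort on, yet the paper bypasses your boundary analysis completely: since $\dim_{\mathbb C}X=2$, the map is a local diffeomorphism on the regular set, so pulling back the complex structure of $M$ forces $N_{J_X}=0$ on an open dense set, hence $J_X$ is integrable and the whole statement reduces to the classical Remmert/Chow situation. No estimate on $A_b$ is ever needed. The only genuinely almost complex case is $k=1$. There the paper does \emph{not} reduce $B$ to points; it uses the full structure of $B$ as a union of $J_X$-holomorphic $1$-subvarieties and finitely many points (Theorem 1.2 of \cite{Z2}): a small transverse pseudoholomorphic disk through a limiting point in $B$ maps to a punctured neighborhood of any $p\in E=\overline{\phi(X\setminus B)}\setminus\phi(X\setminus B)$, which gives $\dim_{\mathcal H}E=0$. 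The conclusion is then reached not via Shiffman but by regarding $\overline{\phi(X\setminus B)}$ as a closed rectifiable $2$-current whose boundary is supported in a set of $\mathcal H^1$-measure zero, applying Federer's Support Theorem, and invoking King's structure theorem for positive closed currents.

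As for your specific steps: the ``Riemann-type extension across codimension-$1$ components of $B$'' is not available in the paper and is not a standard fact for pseudoholomorphic maps out of a non-integrable almost complex domain; only the Hartogs-type extension across points is established in \cite{CZ}. Your bound on $A_b$ via ``rescaling on shrinking spheres and Gromov compactness'' is not justified either: Gromov compactness controls pseudoholomorphic \emph{curves}, not maps from $3$-spheres or $4$-balls, and there is no symplectic form or energy bound in the hypotheses to even formulate such a compactness. The blow-up alternative has the same defect: the almost complex blow-up at a point exists, but the extension of $\phi$ across the exceptional divisor is exactly the resolution-of-indeterminacy step that in the integrable case rests on power-series expansions, and you give no substitute argument here.
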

\begin{proof}
When $\dim_{\C} \phi(X\setminus B)=0$, since $X\setminus B$ is connected, it will be mapped to a single point. Thus its closure is itself and there is nothing to prove.

When $\dim_{\C} \phi(X\setminus B)=2$, we know the pseudoholomorphic map $\phi$ maps $X_{reg}$ diffeomorphically to its image. Hence $J|_{X_{reg}}$ is integrable, which in turn implies $J$ is integrable on whole $X$ since $X=\overline{X_{reg}}$. Hence, it is reduced to the classical complex setting, and the closure $\overline{\phi(X\setminus B)}$ is an analytic subvariety, thus a projective subvariety of $\mathbb CP^N$. Moreover, the set $\overline{\phi(X\setminus B)}\setminus \phi(X\setminus B)$ is a collection of subvarieties.

When $\dim_{\C} \phi(X\setminus B)=1$, we need to show $\overline{\phi(X\setminus B)}$ is an analytic curve in $\mathbb CP^N$. We have shown $\phi(X\setminus B)$ is an open analytic curve. If  $\overline{\phi(X\setminus B)}\setminus \phi(X\setminus B)\ne \emptyset$, choose a point $p$ from it. Suppose a sequence of points $p_i$ approach to $p$. Since critical values are isolated points, we can assume $p_i$ are regular value. Hence, the inverse image $\phi^{-1}(p_i)$ are almost complex submanifolds of $X\setminus B$. Since $p\in \overline{\phi(X\setminus B)}\setminus \phi(X\setminus B)$, all limiting points of these fibers $\phi^{-1}(p_i)$ belong to $B$. Since $B$ is a union of pseudoholomorphic $1$-subvarieties and finitely many points by Theorem 1.2 of \cite{Z2} and positivity of intersection of pseudoholomorphic curves, we can choose a pseudoholomorphic disk that intersects $B$ only at such a limiting point. Because $p\in \overline{\phi(X\setminus B)}\setminus \phi(X\setminus B)$, the disk will be mapped to a punctured disk in $\phi(X\setminus B)$ around $p$. In particular, it implies the set $\overline{\phi(X\setminus B)}\setminus \phi(X\setminus B)$ has Hausdorff dimension $0$.

Now $\overline{\phi(X\setminus B)}$ defines a rectifiable current in $M$. The boundary $\partial(\overline{\phi(X\setminus B)})\subset K\cup A$, where $A$ is the set of critical points and $K=\overline{\phi(X\setminus B)}\setminus \phi(X\setminus B)$. Since $\mathcal H^{1}(K\cup A)=0$, we know $\partial(\overline{\phi(X\setminus B)})=0$ by Support Theorem (\cite{Fed} p. 378). Hence, $\overline{\phi(X\setminus B)}$ is an analytic curve of $M$ by Theorem 5.2.1 of \cite{Ki}.
\end{proof}

We believe that a more careful analysis of the neighborhood of the base locus and the points in the closure would lead us to the same statement for higher dimensions.

\begin{cor}\label{proj}
Let $B$ be the set of base locus of a complex line bundle $(L, \mathcal J)$ over an almost complex $4$-manifold $(X, J)$. Then the closure $\overline{\Phi_{L, \mathcal J}(X\setminus B)}$ is a projective subvariety of $\mathbb CP^N$, where $N=\dim H^0(X, (L, \mathcal J))$.
\end{cor}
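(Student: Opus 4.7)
The plan is to apply Proposition \ref{pcm} directly, with $\phi=\Phi_{L,\mathcal J}:X\setminus B\to \mathbb{CP}^N$ and $M=\mathbb{CP}^N$, and then invoke Chow's theorem to upgrade the resulting analytic subvariety to a projective one. The only genuine task is to check that the base locus of $(L,\mathcal J)$ has the intersection-of-preimages form required by the hypothesis of Proposition \ref{pcm}; both the analytic structure of the image and the algebraicity are then off-the-shelf.

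First I would verify the hypothesis on $B$. Let $\{s_0,\ldots,s_N\}$ be a basis of $H^0(X,(L,\mathcal J))$. By the correspondence between pseudoholomorphic sections of $(L,\mathcal J)$ and pseudoholomorphic maps into the total space recalled in Section 2.2 (Corollary 2.4 of \cite{CZ}), each $s_i$ determines a pseudoholomorphic map $\psi_i:=s_i:(X,J)\to (L,\mathcal J)$. The zero section $Z\subset L$ is an almost complex submanifold of the total space (it is the image of the pseudoholomorphic zero section map, using the Leibniz rule for $\bar\partial_{\mathcal J}$). By the very definition of the base locus,
\[
 B \;=\; \bigcap_{i=0}^{N}\,s_i^{-1}(0) \;=\; \bigcap_{i=0}^{N}\,\psi_i^{-1}(Z),
\]
which is exactly of the form required in Proposition \ref{pcm}. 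Since Section 2.3 has already shown that $\Phi_{L,\mathcal J}:X\setminus B\to \mathbb{CP}^N$ is pseudoholomorphic and $\mathbb{CP}^N$ is a closed complex manifold, Proposition \ref{pcm} applies and yields that $\overline{\Phi_{L,\mathcal J}(X\setminus B)}$ is an analytic subvariety of $\mathbb{CP}^N$.

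The final step is Chow's theorem: every closed analytic subvariety of complex projective space is algebraic, so this closure is a projective subvariety of $\mathbb{CP}^N$. There is no real obstacle beyond the bookkeeping identification of $B$, since the delicate analytic content (the control on limit points in $\overline{\Phi_{L,\mathcal J}(X\setminus B)}\setminus \Phi_{L,\mathcal J}(X\setminus B)$ via positivity of intersections and the Support Theorem) has been packaged into Proposition \ref{pcm}, and the algebraization has been packaged into Chow's theorem.
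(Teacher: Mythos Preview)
Your proof is correct and follows the same approach as the paper: apply Proposition \ref{pcm} and then Chow's theorem. The only difference is that you spell out explicitly why the base locus $B$ satisfies the intersection-of-preimages hypothesis of Proposition \ref{pcm}, which the paper leaves implicit.
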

\begin{proof}
It is an analytic subvariety of $\mathbb CP^N$ by Proposition \ref{pcm}, and hence a projective subvariety by Chow's Theorem.
\end{proof}

As remarked above, we expect $\overline{\Phi_{L, \mathcal J}(X\setminus B)}$ to be a projective subvariety of $\mathbb CP^N$, where $N=\dim H^0(X, (L, \mathcal J))$ for any almost complex manifold $(X, J)$.

Given a complex line bundle $L$ with a bundle almost complex structure $\mathcal J$. It determines a pseudoholomorphic structure $\bar{\p}_{\mathcal J}$ and thus induces that on $L^{\otimes m}, m>0$. For any two complex line bundles $L_1, L_2$ with bundle almost complex structures $\mathcal J_1, \mathcal J_2$, we can define a pseudoholomorphic structure on $L_1\otimes L_2$ by Leibniz rule. In turn, it determines a bundle almost complex structure $\mathcal J_1\otimes \mathcal J_2$ on $L_1\otimes L_2$. Moreover, pseudoholomorphic sections $s_1, s_2$ of $(L_1, \mathcal J_1)$ and $(L_2, \mathcal J_2)$ give rise to pseudoholomorphic section $s_1\cdot s_2$.  We define the section ring of $(L, \mathcal J)$ to be  $$R(X, L, \mathcal J)=\oplus_{d=0}^{\infty}H^0(X, L^{\otimes d}).$$
When $L=\mathcal K_X$ and $\mathcal J=\mathcal J_J$, we will call the section ring the canonical ring and the corresponding scheme $Proj(R)$ the canonical model of $X$.

When $(X, J)$ is a complex manifold, the Iitaka dimension is also equal to the  transcendental degree of the quotient field of the section ring. For projective varieties,  the canonical ring is finitely generated \cite{BCHM}.
Moreover, it is isomorphic to the closure of the image of $\Phi_{m}$ when $m$ is large. We expect these also hold for a general almost complex manifold $(X, J)$. However, we remark that  the section ring need not to be finitely generated even for holomorphic line bundles over projective manifolds.

\section{Bounding $\kappa^J$ by symplectic Kodaira dimension}\label{Weil}
We have seen  that the Iitaka dimensions of a non-integrable almost complex $4$-manifold are usually very small. It could be interpreted as we do not have too many Cartier divisors for a non-integrable almost complex structure. Meanwhile, we can also look at the growth rate of the dimension of the space of Weil divisors, {\it i.e.} the moduli space of the $J$-holomorphic subvarieties $\mathcal M_{me}$ in positive multiple of a class $e\in H^2(M, \mathbb Z)$.  In this section, we will give another generalization of Iitaka dimension by assuming the almost complex structure is tamed. We will show this corresponding version of Kodaira dimension is in general greater than the previous two versions $\kappa^J$ and $\kappa_J$, but is bounded above by the symplectic Kodaira dimension.

When $e$ is a spherical class, we have shown in \cite{s2mod} that, for any almost complex structure, $\mathcal M_e$ behaves like linear system in algebraic geometry. However, for a general class $e$ and a general almost complex structure, the moduli space is very complicated. We suggest the following way to estimate (the upper bound of) its dimension.  Recall there is a natural topology on $\mathcal M_e$ defined in terms of convergent sequences (see {\it e.g.} \cite{T09}). A sequence $\{\Theta_k\}$ in $\mathcal M_e$ converges to a given element $\Theta$ if the following two conditions are met:

\begin{itemize}
\item  $\lim_{k\to \infty} (\sup _{z\in |\Theta|} \hbox{dist}(z, |\Theta_k|)
+\sup _{z'\in |\Theta_k|} \hbox{dist}(z', |\Theta|))=0$.

\item  $\lim_{k\to \infty} \sum_{(C', m')\in \Theta_k} m'\int_{C'}\nu=\sum_{(C, m)\in \Theta} m\int_{C}\nu$ for any given smooth 2-form $\nu$.
\end{itemize}

 If for any element $x\in \mathcal M_e$, there is a open neighborhood $U\subset \mathcal M_e$ and a continuous map from a complex manifold $\mathcal X_U$ onto $U$, we would say the local dimension of $\mathcal M_e$ at $x$ is bounded above by $\dim_{\C} \mathcal X_U$. 
The manifold $\mathcal X_U$ is called a local parameter space. We can define $\dim \mathcal M_e$ to be the infimum of the dimensions of all the possible parameter spaces.

We can thus define $$\kappa_W^J(X, e)=\begin{cases} \begin{array}{cl} -\infty, &\ \text{if} \ \mathcal M_{me}=\emptyset\  \text{for all} \ m\geq 0\\
\limsup_{m\rightarrow \infty} \dfrac{\log (1+\dim \mathcal M_{me})}{\log m}, &\  \text{otherwise.}\end{array}
\end{cases}$$

When $e$ is the canonical class, we will simply write $\kappa_W^J(X)$ for this Weil divisor version of Kodaira dimension.

Apparently, we have
\begin{lem}\label{0tosec}
When $(X, J)$ is a $4$-dimensional tamed almost complex manifold, $\kappa^J(X, (L, \mathcal J))\le \kappa_W^J(X, c_1(L))$.
\end{lem}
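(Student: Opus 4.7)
The plan is to construct, for each $m \ge 1$, a continuous injection
\[
\Psi_m \colon \mathbb P\bigl(H^0(X, (L^{\otimes m}, \mathcal J))\bigr) \hookrightarrow \mathcal M_{m c_1(L)}, \qquad [s] \mapsto Z(s),
\]
using the correspondence between pseudoholomorphic sections of a complex line bundle and their zero 1-subvarieties in class $m c_1(L)$ recalled at the end of Section 2.2. This is well defined since rescaling $s$ preserves $Z(s)$ as a subvariety with multiplicities. Continuity in the convergence topology on $\mathcal M_{mc_1(L)}$ is immediate: convergence in the finite-dimensional vector space $H^0$ gives $C^\infty$ convergence of the sections, hence Hausdorff convergence of the zero loci, while the integration pairing $\int_{Z(s)}\nu = m\,c_1(L)\smile [\nu]$ is independent of the chosen $s$.

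The key step is injectivity, which I would handle using the tamed hypothesis directly. Suppose $Z(s_1) = Z(s_2)$ as elements of $\mathcal M_{mc_1(L)}$; pick $p \in X$ with $s_1(p), s_2(p)\ne 0$, let $c \in \C^*$ be their ratio in a common local trivialization at $p$, and set $s' := s_2 - c s_1$, so $s'(p)=0$. Assume for contradiction $s'\not\equiv 0$. Along each irreducible component $V_i$ of $|Z(s_1)|$ of multiplicity $m_i$, the sections $s_1,s_2$ vanish to the same order $m_i$, so $s'$ vanishes to order at least $m_i$ along $V_i$, giving
\[
mc_1(L) = [Z(s')] = [Z(s_1)] + \sum_i (m_i'-m_i)[V_i] + \sum_j n_j[W_j],
\]
with $m_i'\ge m_i$ and $\{W_j\}$ the irreducible components of $|Z(s')|$ not in $|Z(s_1)|$. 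The extra class must vanish, but by the tamed hypothesis every nonempty effective $J$-holomorphic 1-subvariety has strictly positive $\omega$-area, forcing $m_i' = m_i$ and all $n_j = 0$. This contradicts $p \in |Z(s')|\setminus |Z(s_1)|$, so $s'\equiv 0$ and $[s_1]=[s_2]$.

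Since $\Psi_m$ is a continuous injection from the compact Hausdorff space $\mathbb P(H^0)$ into the Hausdorff space $\mathcal M_{mc_1(L)}$, it is a topological embedding, realizing a copy of $\mathbb{CP}^{P_{L^{\otimes m},\mathcal J}-1}$ inside $\mathcal M_{mc_1(L)}$. Any continuous surjection from a complex manifold $\mathcal X_U$ onto an open neighborhood $U$ of a point $\Psi_m([s_0])$ restricts, via pullback along the embedding, to a continuous surjection from an open subset of $\mathcal X_U$ onto an open subset of $\mathbb{CP}^{P_{L^{\otimes m},\mathcal J}-1}$, forcing $\dim_{\C}\mathcal X_U \ge P_{L^{\otimes m},\mathcal J}-1$. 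Therefore $\dim \mathcal M_{mc_1(L)} \ge P_{L^{\otimes m},\mathcal J}-1$, and taking $\limsup_{m\to\infty}\log(\cdot)/\log m$ yields $\kappa^J(X,(L,\mathcal J))\le \kappa^J_W(X, c_1(L))$.

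The most delicate point is this last dimension comparison: the paper's definition of $\dim \mathcal M_e$ as an infimum over parameter spaces requires ruling out pathological continuous surjections from strictly lower-dimensional manifolds. This is automatic for the holomorphic/analytic parameter maps arising in moduli theory but warrants a brief justification in the purely continuous setting; the injectivity step, which already uses the tamed hypothesis nontrivially via homological positivity, is comparatively routine once the correct local vanishing-order formalism from Section 2.2 is in place.
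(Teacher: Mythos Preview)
Your approach is essentially identical to the paper's: both establish the injection $\mathbb P H^0 \hookrightarrow \mathcal M_{mc_1(L)}$ via $[s]\mapsto Z(s)$, prove injectivity by the same homological-positivity contradiction using tameness (the paper picks $x\notin Z$ with $s_1(x)=ks_2(x)$ and observes $Z(s_1-ks_2)\supsetneq Z$ forces a different homology class), and conclude by comparing growth rates. Your treatment is actually more careful than the paper's---the paper simply asserts the embedding and the dimension inequality without discussing continuity or the parameter-space subtlety you flag---though note your claim $\int_{Z(s)}\nu = m\,c_1(L)\smile[\nu]$ holds only for \emph{closed} $\nu$, whereas the convergence topology demands it for all smooth $2$-forms; this is easily patched by appealing to current convergence from $C^\infty$ convergence of the sections.
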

\begin{proof}
It follows from two facts. First, the zero locus of sections $s\in H^0(X, (L^{\otimes m}, \mathcal J))$ is a $J$-holomorphic subvariety in class $mc_1(L)$ by Corollary 1.3 of \cite{Z2}.

Second,  if two non-trivial pseudoholomorphic sections $s_1, s_2$ of $(L, \mathcal J)$ have the same zero locus $Z$ (counted with multiplicities), then $s_1=ks_2$ for a nonzero constant $k$. If it is not true, then there is a point $x\notin Z$ such that $s_1(x)=ks_2(x)$ for some constant $k\ne 0$, but $s_1\ne ks_2$. By the definition of bundle almost complex structure, $s_1-ks_2$  is also a pseudoholomorphic section of $(L, \mathcal J)$. However, its zero locus contains $Z\cup \{x\}$. In particular, the homology class of the zero locus is different from $c_1(L)$ since any $J$-holomorphic subvariety has non-trivial homology when $J$ is tamed. This contradiction implies that the zero locus of $(L, \mathcal J)$ determines the pseudoholomorphic section up to a nonzero constant.

These two facts together imply that $\mathbb PH^0(X, (L^{\otimes m}, \mathcal J))$ is embedded in $\mathcal M_{mc_1(L)}$. Hence $\dim \mathcal M_{mc_1(L)}\ge P_{L^{\otimes m}, \mathcal J}$, and thus $\kappa^J(X, (L, \mathcal J))\le \kappa_W^J(X, c_1(L))$.
\end{proof}

In general, $\kappa^J_W$ is greater than $\kappa^J$.

\begin{prop}\label{bigKod2}
Suppose $(X, J)$ is a tamed almost complex $4$-manifold with $b^+=1$. Let $e\in H^2(X, \mathbb Z)$ be a class in the positive cone with $e\cdot e>0$. Then $\kappa^J_W(X, e)\ge 2$.
\end{prop}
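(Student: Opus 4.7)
The strategy is to produce a lower bound of the form $\dim\mathcal M_{me}\ge C m^2$ for some $C>0$ independent of $m$, which yields $\kappa_W^J(X,e)\ge 2$ directly from the definition:
$$\kappa_W^J(X,e)=\limsup_{m\to\infty}\frac{\log(1+\dim\mathcal M_{me})}{\log m}\ge\limsup_{m\to\infty}\frac{\log(1+C m^2)}{\log m}=2.$$
The key input is the virtual complex dimension $d(\alpha):=(\alpha\cdot\alpha - K_X\cdot\alpha)/2$ of the moduli of pseudoholomorphic subvarieties in class $\alpha$, arising from adjunction together with the Fredholm index of the linearized $\bar\p$ operator (with varying complex structure on the source, modulo reparametrizations). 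For $\alpha = me$ this gives $d(me) = \tfrac12(m^2\, e\cdot e - m\, K_X\cdot e)\sim \tfrac12 (e\cdot e)\, m^2$, which grows quadratically in $m$.

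To upgrade this virtual count to a genuine lower bound on $\dim \mathcal M_{me}$, I would invoke the Taubes SW$\,=\,$Gr correspondence, extended to $b^+=1$ by Li-Liu. Since $e$ sits in the positive cone of the taming symplectic form $\omega$ with $e\cdot e > 0$, every multiple $me$ satisfies $me\cdot\omega > 0$ and $(me)^2 > 0$, so the chamber structure for $b^+=1$ gives a non-zero Gromov-Taubes count of $J$-holomorphic subvarieties in class $me$ through $d(me)$ generic points of $X$ for every tamed $J$. Equivalently, for every generic tuple $(p_1,\dots,p_{d(me)})\in X^{d(me)}$ there exists a $J$-holomorphic subvariety in class $me$ whose support contains all of the $p_i$. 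Considering the evaluation map $\mathrm{ev}: \mathcal M_{me}^{(d(me))}\to X^{d(me)}$ on the pointed moduli, this says $\mathrm{ev}$ is surjective onto an open subset of $X^{d(me)}$, hence $\dim_{\C}\mathcal M_{me}^{(d(me))}\ge 2d(me)$. Since the forgetful map $\mathcal M_{me}^{(d(me))}\to\mathcal M_{me}$ has fibers of complex dimension at most $d(me)$ (one parameter per marked point on a complex $1$-subvariety), any local parameter space covering a neighborhood of such a subvariety in $\mathcal M_{me}$ must have complex dimension at least $d(me)$, so $\dim \mathcal M_{me}\ge d(me)$, as required.

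The main obstacle is justifying the non-vanishing of the Gromov-Taubes invariant for a merely tamed (rather than $\omega$-compatible) $J$, and converting the resulting virtual count into an honest bound on $\dim\mathcal M_{me}$ in the parameter-space sense of Section \ref{Weil}. Reducible and multiply-covered configurations can in principle contribute to the count and distort the relation between virtual and actual dimensions; however, it suffices to carry out the evaluation argument on the top stratum of embedded smooth representatives, where for generic tamed $J$ the moduli is a complex manifold of the expected dimension and the count is clean, with the case of arbitrary tamed $J$ then reduced to the generic one by the $J$-independence of the underlying Seiberg-Witten invariant.
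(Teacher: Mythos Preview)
Your strategy matches the paper's: invoke Seiberg--Witten wall crossing together with the Li--Liu/Taubes $\mathrm{SW}=\mathrm{Gr}$ package to produce, for all large $m$, a $J$-holomorphic subvariety in class $me$ through any $d(me)=\tfrac12(m^2e\cdot e-mK_X\cdot e)$ generic points, and then translate this into a quadratic lower bound on $\dim\mathcal M_{me}$.

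The gap is in that translation step. Recall that here $\dim\mathcal M_{me}$ is \emph{defined} as the infimum of $\dim_{\C}\mathcal X_U$ over continuous surjections from complex manifolds $\mathcal X_U$ onto neighborhoods $U\subset\mathcal M_{me}$; neither $\mathcal M_{me}$ nor your pointed space $\mathcal M_{me}^{(d(me))}$ is known to be a manifold, so the inequalities ``$\mathrm{ev}$ surjective onto an open set $\Rightarrow\dim_{\C}\mathcal M_{me}^{(d(me))}\ge 2d(me)$'' and ``forgetful fibers have dimension $\le d(me)\Rightarrow\dim\mathcal M_{me}\ge d(me)$'' are not immediately meaningful in this framework. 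Your closing paragraph tries to repair this by passing to a generic tamed $J$ where the embedded locus is a smooth manifold of the expected dimension, but the proposition is stated for an \emph{arbitrary} tamed $J$, and $J$-independence of the Seiberg--Witten invariant only transports the \emph{existence of curves through points}, not the smoothness of the moduli.

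The paper bypasses this as follows. Fix one $\Theta\in\mathcal M_{me}$, pick $d(me)$ smooth points $x_i\in|\Theta|$, and choose small $J$-holomorphic disks $D_i$ meeting $|\Theta|$ transversely only at $x_i$. By the existence statement, every tuple $(y_1,\dots,y_{d(me)})\in D_1\times\cdots\times D_{d(me)}$ lies on some element of $\mathcal M_{me}$; since each small $D_i$ meets any nearby subvariety in exactly one point, distinct tuples give distinct subvarieties. The upshot is not just an injection of the polydisk into $\mathcal M_{me}$, but also a continuous map in the other direction near $\Theta$ (send a subvariety to its unique intersections with the $D_i$). Composing, any local parameter space $\mathcal X_U\to U$ maps continuously onto an open subset of $\prod_i D_i$, forcing $\dim_{\C}\mathcal X_U\ge d(me)$ for the given $J$. (The paper also records the $b_1=2$ variant, where one uses $d(me)-1$ points together with the $H_1$-generators from Taubes; you omitted this case.)
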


Meanwhile, $\kappa_J\le 1$ for non-integrable $J$ as shown in Theorem \ref{Koddimmax}.

\begin{proof}
We let $\iota_{e}=e\cdot e-K_J\cdot e$.
By Seiberg-Witten wall crossing formula and SW=Gr \cite{LLimrn, Tbk}, then for any sufficiently large integer $n$, there is an element in $\mathcal M_{ne}$ passing through any given $\frac{1}{2}\iota_{ne}$ points if $b_1\ne 2$. If $b_1=2$, there is an element in $\mathcal M_{ne}$ passing through any given $\frac{1}{2}\iota_{ne}-1$ points and intersecting each of the chosen generators of $H_1(X, \mathbb Z)$/torsion (see {\it e.g.} Proposition 1.1 in \cite{T17}).

We first choose an element in $\mathcal M_{ne}$, say $\Theta$. Choose  $\frac{1}{2}\iota_{ne}$ points $x_i$ (or $\frac{1}{2}\iota_{ne}-1$ points when $b_1=2$) on the smooth part of $|\Theta|$. Choose the same number of ($J$-holomorphic) disks $D_i$ containing these points and transverse to $|\Theta|$. We assume $D_i\cap |\Theta|=x_i$. When we choose the disks small, the corresponding elements in $\mathcal M_{ne}$ passing through one point in each disk would intersect $D_i$ only once. Hence, the elements determined by elements in $D_1\times \cdots \times D_{\frac{1}{2}\iota_{ne}}$ (or $D_1\times \cdots \times D_{\frac{1}{2}\iota_{ne}-1}$ and two circles) are different. Hence, $\dim \mathcal M_{ne}\ge \frac{1}{2}\iota_{ne}-1$. This implies $\kappa^J_W(X, e)\ge 2$.
\end{proof}

If $e$ is the class of a symplectic form tamed by $J$, then by the argument of Theorem \ref{W=s}, we have $\kappa^J_W(X, e)= 2$.

We can bound $\kappa_W^J(X)$ from above by the symplectic Kodaira dimension \cite{L}, whose definition is recalled in the following.

\begin{defn}\label{symKod}
For a minimal symplectic $4$-manifold $(X,\omega)$ with symplectic
canonical class $K_{\omega}$,   the Kodaira dimension of
$(X,\omega)$
 is defined in the following way:
$$
\kappa^s(X,\omega)=\begin{cases} \begin{array}{cll}
-\infty & \hbox{ if $K_{\omega}\cdot [\omega]<0$ or} & K_{\omega}\cdot K_{\omega}<0,\\
0& \hbox{ if $K_{\omega}\cdot [\omega]=0$ and} & K_{\omega}\cdot K_{\omega}=0,\\
1& \hbox{ if $K_{\omega}\cdot [\omega]> 0$ and} & K_{\omega}\cdot K_{\omega}=0,\\
2& \hbox{ if $K_{\omega}\cdot [\omega]>0$ and} & K_{\omega}\cdot K_{\omega}>0.\\
\end{array}
\end{cases}
$$

The Kodaira dimension of a non-minimal manifold is defined to be that of any of its minimal models.
\end{defn}

Here $K_{\omega}$ is defined as the first Chern class of the cotangent bundle for any almost complex structure compatible with $\omega$.
We remark that $\kappa^s$ is independent of the symplectic form $\omega$ and thus a smooth invariant.

First, we need a lemma. This is an adaption of Lemma 2.5 in \cite{s2mod}. Here, we no longer assume $e$ is $J$-nef and $\Theta\in \mathcal M_e$ is connected. However, we fix more points on each irreducible components.

\begin{lem}\label{pointscurve}
Let $J$ be an almost complex structure on a compact smooth $4$-manifold $X^4$ and $\{(x_1, d_1), \cdots, (x_k, d_k)\}$ are points with weight.
Let $\Theta=\{(C_i, m_i), 1\le i\le n\}\in \mathcal M_e$ be a subvariety passing through these points with weight such that there are at least $m_i\max\{e\cdot e_{C_i}+1, 0\}$ points (counted wtih multiplicities) on $C_i$ for each $i$ and all $x_i$ are smooth points. Then there are no other such subvariety in class $e$.
\end{lem}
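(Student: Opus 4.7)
The plan is to suppose by contradiction that there exists $\Theta'\in\mathcal M_e$ with $\Theta'\neq\Theta$ also passing through every weighted point $(x_l,d_l)$, and to derive a contradiction from positivity of intersection of $J$-holomorphic curves. The smoothness hypothesis plays a crucial role: each $x_l$ lies on exactly one component $C_i$ of $\Theta$ and $\mathrm{mult}_{x_l}(\Theta)=m_i$, so the weight condition forces $d_l\le m_i$; meanwhile $\Theta'$ has multiplicity at least $d_l$ at $x_l$, and hence the local intersection of $C_i$ with any $J$-holomorphic cycle that does not contain $C_i$ at $x_l$ is at least $d_l$.

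The first step is to show that every irreducible component $C_i$ of $\Theta$ must appear as a component of $\Theta'$. If $C_i\not\subset\Theta'$, positivity of intersection would yield
$$e\cdot e_{C_i}\;=\;[C_i]\cdot[\Theta']\;\ge\;\sum_{x_l\in C_i}\mathrm{mult}_{x_l}(\Theta')\;\ge\;\sum_{x_l\in C_i}d_l\;\ge\;m_i\max\{e\cdot e_{C_i}+1,\,0\}.$$
When $e\cdot e_{C_i}\ge 0$ this forces $e\cdot e_{C_i}\ge m_i(e\cdot e_{C_i}+1)>e\cdot e_{C_i}$, and when $e\cdot e_{C_i}<0$ the left side is nonnegative by positivity while its homological value is negative; both are contradictions. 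Hence $C_i$ is a component of $\Theta'$ for every $i$.

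The second step is to match multiplicities and exclude extra components. Write $\Theta'=\sum_i m'_i C_i+R$ with $m'_i\ge 1$ and $R$ an effective cycle whose irreducible components differ from every $C_i$. Put $k_i=\min(m_i,m'_i)$ and form the residual pair $\widetilde\Theta=\sum_i(m_i-k_i)C_i$ and $\widetilde\Theta'=\sum_i(m'_i-k_i)C_i+R$: these are effective, share no common irreducible component, and satisfy $[\widetilde\Theta]=[\widetilde\Theta']$. If $\widetilde\Theta=0$, then $m_i\le m'_i$ for every $i$ and $[\widetilde\Theta']=0$; tameness, together with the fact that every $J$-holomorphic subvariety has strictly positive $[\omega]$-pairing, forces $\widetilde\Theta'=0$, whence $m_i=m'_i$ and $R=0$, so $\Theta=\Theta'$ as desired. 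If instead $\widetilde\Theta\neq 0$, choose a component $C_j$ of $\widetilde\Theta$ (so $m_j>m'_j$). For smooth $x_l\in C_j$ the identity $\mathrm{mult}_{x_l}(\Theta')=m'_j+\mathrm{mult}_{x_l}(R)\ge d_l$ gives $\mathrm{mult}_{x_l}(\widetilde\Theta')=\mathrm{mult}_{x_l}(R)\ge(d_l-m'_j)_+$; running the Step 1 estimate with the residual weights on the pair $(\widetilde\Theta,\widetilde\Theta')$, in which $C_j\not\subset\widetilde\Theta'$, yields a numerical contradiction against the homological value $[C_j]\cdot[\widetilde\Theta']$.

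The hard part is the last contradiction in the second step: because the individual weights $d_l$ may be strictly smaller than $m_j$ and the reduced test involves the residual class $[\widetilde\Theta]\cdot e_{C_j}$ rather than $e\cdot e_{C_j}$, one must combine the hypothesis $\sum_{x_l\in C_j}d_l\ge m_j\max\{e\cdot e_{C_j}+1,0\}$ with the identity $[\widetilde\Theta]\cdot e_{C_j}=e\cdot e_{C_j}-\sum_i k_i\,[C_i]\cdot[C_j]$ and the positivity inequalities $[C_i]\cdot[C_j]\ge 0$ to obtain the residual bound $\sum_{x_l\in C_j}(d_l-m'_j)_+\ge(m_j-m'_j)\max\{[\widetilde\Theta]\cdot e_{C_j}+1,0\}$ in the style of \cite{s2mod} Lemma 2.5. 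This pigeonhole-flavored comparison is the main computational obstacle; once established, it closes the argument via the Step 1 template applied to $(\widetilde\Theta,\widetilde\Theta')$.
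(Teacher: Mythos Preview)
Your overall strategy—subtract the common part of $\Theta$ and $\Theta'$ and obtain a contradiction from positivity of intersection—is the same as the paper's. Your Step 1 is correct and is in fact a special case of the paper's reduction (take $\Theta_0'=\Theta'$). The genuine gap is in Step 2.

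You yourself flag that the inequality
\[
\sum_{x_l\in C_j}(d_l-m'_j)_+\;\ge\;(m_j-m'_j)\max\{[\widetilde\Theta]\cdot e_{C_j}+1,0\}
\]
is ``the main computational obstacle'' and then proceed as if it were available. It is not, and a componentwise inequality of this shape need not hold. Consider the toy configuration $\Theta=2C_1$ with $C_1^2=1$, six marked points on $C_1$ each of weight $1$ (so the hypothesis $\sum d_l\ge m_1(e\cdot e_{C_1}+1)=6$ is met), and a hypothetical $\Theta'=C_1+C'$ with $[C']=[C_1]$. Then $m'_1=1$, every residual weight $(d_l-m'_1)_+$ vanishes, while $(m_1-m'_1)\max\{[\widetilde\Theta]\cdot e_{C_1}+1,0\}=2$; your displayed inequality fails. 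Of course this particular $\Theta'$ may be obstructed for other reasons, but that obstruction cannot be extracted from a single component $C_j$.

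The paper's proof avoids this by \emph{not} testing one component at a time after reduction. It compares the total intersection $\Theta_0\cdot\Theta_0'$ with the homological number $e_{\Theta_0}^2$, and the key point is the aggregate algebraic inequality
\[
(e-e_{\Theta_0})\cdot e_{\Theta_0}\ge 0,
\]
obtained by pairing each summand $e_{C_{k_i}}$, $e_{C_{l_j}}$, $e_{C_{q_p}}$ in $e-e_{\Theta_0}$ against whichever of the two equal expressions $e_{\Theta_0}=e_{\Theta_0'}$ does not contain that curve. This two-sided use of $e_{\Theta_0}=e_{\Theta_0'}$ is exactly what rescues the count; it has no analogue in a single-component test. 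Replace your componentwise Step 2 by this aggregate comparison.

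A minor point: in the case $\widetilde\Theta=0$ you invoke tameness to kill $\widetilde\Theta'$, but tameness is not part of the hypothesis of the lemma.
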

\begin{proof}
Suppose there is another subvariety $\Theta'\in \mathcal M_e$ passing through this set of points with multiplicities.

We rewrite two subvarieties $\Theta, \Theta' \in \mathcal M_e$, allowing $m_i=0$ in the notation, such that they share the same set of irreducible components formally, i.e. $\Theta=\{(C_i, m_i)\}$ and $\Theta'=\{(C_i, m'_i)\}$. Then for each $C_i$, if $m_i\le m'_i$, we change the components to $(C_i, 0)$ and $(C_i, m'_i-m_i)$. At the same time, if a point $x$, as one of $x_1, \cdots, x_k$, is on $C_i$, then the weight is reduced by $m_i$ as well. Similar procedure applies to the case when $m_i> m_i'$.  Apply this process to all $i$ and discard finally all components with multiplicity $0$ and denote them by $\Theta_0,\Theta'_0$ and still use $(C_i, m_i)$ and $(C_i, m'_i)$ to denote their components. Notice they are homologous, formally having homology class $$e-\sum_{m_{k_i}<m_{k_i}'} m_{k_i}e_{C_{k_i}}-\sum_{m_{l_j}'<m_{l_j}} m'_{l_j}e_{C_{l_j}}-\sum_{m_{q_p}'=m_{q_p}} m'_{q_p}e_{C_{q_p}}.$$

There are two ways to express the class, by taking $e=e_{\Theta}$ or $e=e_{\Theta'}$ in the above formula. Namely, it is $$\sum_{m_{k_i}<m_{k_i}'}(m_{k_i}'-m_{k_i})e_{C_{k_i}}+\hbox{others}=e_{\Theta_0'}=e_{\Theta_0}=\sum_{m_{l_j}'<m_{l_j}} (m_{l_j}-m'_{l_j})e_{C_{l_j}}+\hbox{others}.$$
Here the term ``others" means the terms $m_ie_{C_i}$ or $m_i'e_{C_i}$ where $i$ is not taken from $k_i$, $l_j$ or $q_p$.

Now $\Theta_0$ and $\Theta_0'$ have no common components. By the process we just applied, counted with weight, there are  at least $e\cdot e_{\Theta_0}+\sum_{i: C_i\in \Theta_0}m_i$ points on $\Theta_0$. These points are also contained in $\Theta_0'$ with right weights. Hence $\Theta_0$ and $\Theta_0'$ would intersect at least $e\cdot e_{\Theta_0}+\sum_{i: C_i\in \Theta_0}m_i$ points with weight.

We notice that $e\cdot e_{\Theta_0}\ge e_{\Theta_0}\cdot e_{\Theta_0'}\ge 0$. In fact, the difference $e-e_{\Theta_0}=e-e_{\Theta_0'}$ has $3$ types of terms, any of them pairing non-negatively with the class $e_{\Theta_0}$. For the terms with index $k_i$, {\it i.e.} the terms with $m_{k_i}<m_{k_i}'$, we use the expression of $e_{\Theta_0}=\sum_{m_{l_j}'<m_{l_j}} (m_{l_j}-m'_{l_j})e_{C_{l_j}}+\hbox{others}$ to pair with. Since the irreducible curves involved in the expression are all different from $C_{k_i}$, we have $e_{C_{k_i}}\cdot e_{\Theta_0}\ge 0$. Similarly, for $C_{l_j}$, we use the expression of $e_{\Theta_0'}=\sum_{m_{k_i}<m_{k_i}'}(m_{k_i}'-m_{k_i})e_{C_{k_i}}+\hbox{others}$. We have  $e_{C_{l_j}}\cdot e_{\Theta_0'}\ge 0$. For $C_{q_p}$, we could use either $e_{\Theta_0}$ or $e_{\Theta_0'}$. Since $e_{\Theta_0}=e_{\Theta_0'}$, we have $(e-e_{\Theta_0})\cdot e_{\Theta_0}\ge 0$.

Since $\Theta\ne \Theta'$, we know $\sum_{i: C_i\in \Theta_0}m_i>0$. Hence $e\cdot e_{\Theta_0}+\sum_{i: C_i\in \Theta_0}m_i>e_{\Theta_0}\cdot e_{\Theta_0'}$. This inequality  $e\cdot e_{\Theta_0}> e_{\Theta_0}^2$ implies there are more intersections  than the homology intersection number $e_{\Theta_0}^2$ of our new subvarieties $\Theta_0$ and $\Theta_0'$. This contradicts to the local positivity of intersection and the fact that $\Theta_0, \Theta_0'$ have no common component. The contradiction implies that $\Theta$ is the unique such subvariety as described in the statement.
\end{proof}

We are ready to prove the main result of this section.

\begin{thm}\label{W<s}
Let $(X, J)$ be a tamed almost complex $4$-manifold. Then $$\kappa_W^J(X)\le \kappa^s(X).$$
\end{thm}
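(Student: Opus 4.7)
My plan is to parametrize $\mathcal M_{mK_X}$ near each of its elements by a configuration of points on the supporting irreducible curves, invoke Lemma \ref{pointscurve} for uniqueness, and then show that the number of points needed grows polynomially in $m$ with degree at most $\kappa^s(X)$. Concretely, for $\Theta = \{(C_i, m_i)\} \in \mathcal M_{mK_X}$, set
$$
N(\Theta) \;:=\; \sum_i m_i \max\{mK_X \cdot e_{C_i} + 1,\, 0\}.
$$
Exactly as in Proposition \ref{bigKod2}, choosing small pseudoholomorphic disks transverse to $|\Theta|$ at each of these $N(\Theta)$ points yields a continuous injection from $D_1 \times \cdots \times D_{N(\Theta)}$ into a neighborhood of $\Theta$ in $\mathcal M_{mK_X}$, the injectivity being precisely Lemma \ref{pointscurve}. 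Hence $\dim \mathcal M_{mK_X} \le \sup_\Theta N(\Theta)$, and it suffices to show $N(\Theta) = O(m^{\kappa^s(X)})$. This is the ``probabilistic combinatorics'' style parametrization advertised in the introduction: overdetermine $\Theta$ by many points in order to count the moduli space by counting configurations.

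The core inputs are the identity $\sum_i m_i (mK_X \cdot e_{C_i}) = m^2 K_X^2$ and a Gromov compactness bound $\sum_i m_i = O(m)$ coming from the uniform positive lower bound on the $\omega$-area of any irreducible $J$-holomorphic curve and the equality $\sum_i m_i \, \omega \cdot e_{C_i} = m\, \omega \cdot K_X$. Splitting $N(\Theta) \le P + \sum_i m_i$ with $P := \sum_i m_i (mK_X \cdot e_{C_i})_+$, and writing $P = m^2 K_X^2 + N'$ for $N' := \sum_i m_i (mK_X \cdot e_{C_i})_-$, the problem reduces to controlling $N'$. For $\kappa^s(X) = 2$, since for fixed tamed $J$ there are only finitely many classes of irreducible $J$-holomorphic curves with negative self-intersection (Gromov compactness), and each such class appears with multiplicity at most $O(m)$ by positivity of intersection against $mK_X$, one obtains $N' = O(m^2)$ and hence $N(\Theta) = O(m^2)$. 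For $\kappa^s(X) = 1$, the minimal model $X_{\min}$ carries an elliptic fibration so its canonical class is a non-negative combination of zero self-intersection fiber classes, and after using Proposition \ref{1pcE} to freeze the exceptional-class components of $\Theta$, the remaining variation lives in an $O(m)$-dimensional family of fiber-class configurations, yielding $N(\Theta) = O(m)$.

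For $\kappa^s(X) = 0$, the classification of minimal symplectic $4$-manifolds with vanishing Kodaira dimension forces $K_{X_{\min}}$ to be a torsion class, so $mK_X$ is, modulo torsion and pullback from the minimal model, a non-negative integer combination of exceptional curve classes on $X$; Proposition \ref{1pcE} then forces $\mathcal M_{mK_X}$ to be a finite set, giving $\dim \mathcal M_{mK_X} = 0$ and $\kappa_W^J \le 0$. For $\kappa^s(X) = -\infty$, on a minimal rational or ruled $X$ we have $K_X \cdot \omega < 0$, so $\mathcal M_{mK_X} = \emptyset$ for every $m > 0$ since $J$-holomorphic subvarieties have positive $\omega$-area under a taming form; on non-minimal rational or ruled $X$, I would decompose $K_X = \pi^* K_{X_{\min}} + \sum E_j$ and combine a Seiberg--Witten/Taubes vanishing argument for Gromov invariants in the class $mK_X$ with Proposition \ref{1pcE} to conclude $\mathcal M_{mK_X}$ is empty or at most $0$-dimensional.

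The main obstacle I foresee is controlling the negative-self-intersection components in $\Theta$ with the correct exponent in $m$. A crude bound only gives $N' = O(m^2)$ uniformly, which is already enough for $\kappa^s = 2$ but too weak for $\kappa^s \le 1$, where one genuinely needs to argue that the exceptional-class contributions to $\Theta$ are rigid. The remedy is the uniqueness given by Proposition \ref{1pcE}: the exceptional part of $\Theta$ is determined by its homology class, so the only true moduli of $\Theta$ in $\mathcal M_{mK_X}$ lives along the ``non-exceptional'' part, whose contribution to $N(\Theta)$ matches $m^{\kappa^s(X)}$. This interplay between the overdetermined parametrization of Lemma \ref{pointscurve} and the exceptional-class rigidity of Proposition \ref{1pcE} is the conceptual heart of the argument.
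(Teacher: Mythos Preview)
Your overall strategy matches the paper's, but there are genuine gaps in the case analysis. The most serious is your assertion for $\kappa^s(X) = 1$ that ``the minimal model $X_{\min}$ carries an elliptic fibration''. This is an open problem for symplectic $4$-manifolds; it holds for complex surfaces via Kodaira's classification, but no such structure theorem is available in the symplectic category. The paper avoids this entirely by using instead that $K_{X_{\min}}$ is $J$-nef (Lemma 2.1 of \cite{p=h}), which forces $K_X \cdot e_{C_i} = 0$ for every component of any $\Theta \in \mathcal M_{mK_X}$ in the minimal case, so that one point per component (with multiplicity) suffices in Lemma \ref{pointscurve}. For non-minimal $X$, the paper passes through Sikorav's theorem to make $J$ integrable near $|\Theta|$, blows down to a minimal model, and deduces that each component of the original $\Theta$ has non-positive self-intersection. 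Your appeal to Proposition \ref{1pcE} to ``freeze exceptional components'' does not replace this, since there is no a priori decomposition of $\Theta$ into an exceptional part and a fiber-class part at the level of irreducible components.

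Two further errors. For $\kappa^s(X) = 2$, your claim that a fixed tamed $J$ admits only finitely many classes of irreducible curves with \emph{negative self-intersection} is false (an algebraic K3 can carry infinitely many $-2$-curves); what you actually need, and what the paper uses, is that irreducible curves with $K_X \cdot C < 0$ are smooth $-1$ rational curves (Lemma 2.1 of \cite{p=h}), and these lie in finitely many classes once $M$ is not rational or ruled. For $\kappa^s(X) = -\infty$, you cannot invoke Proposition \ref{1pcE} at all: its hypothesis excludes rational and ruled manifolds, which is exactly what $\kappa^s = -\infty$ characterizes. Moreover, concluding ``empty or at most $0$-dimensional'' only yields $\kappa_W^J \le 0$, not $-\infty$; the correct input is that $mK_X$ is never $J$-effective for any tamed $J$ on a rational or ruled $4$-manifold, so $\mathcal M_{mK_X} = \emptyset$ for every $m>0$. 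Finally, note that your parametrization is stated in the wrong direction: an injection from $D_1 \times \cdots \times D_{N(\Theta)}$ \emph{into} $\mathcal M_{mK_X}$ would give a lower bound; what Lemma \ref{pointscurve} provides is that the map sending a subvariety to its marked points is injective, so the product of disks (or $X^{N(\Theta)}$) dominates a neighborhood of $\Theta$.
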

\begin{proof}
For any pseudoholomorphic section $s\in H^0(X, \mathcal K_X^{\otimes m})$, it is a $(J, \mathcal J_{J})$-holomorphic map from $X$ to the total space of $\mathcal K_X^{\otimes m}$ endowed with the standard bundle almost complex structure induced by $\bar{\p}_m$. Hence, by Corollary 1.3 of \cite{Z2}, we know the zero locus is a $J$-holomorphic $1$-subvariety in class $mK_X$.

When $\kappa^s(X)=-\infty$, the canonical class is not effective, so $\mathcal M_{mK_X}=\emptyset, \forall m>0$. Hence, $\kappa^J_W(X)=-\infty$.

When $\kappa^s(X)=0$, then $K_X=K_{X_{\min}}+\sum E_i$, where $nK_{X_{\min}}=0$ for $n=1$ or $2$. Moreover, by Proposition \ref{1pcE}, there is a unique $J$-holomorphic representative for positive linear combination of $-1$ classes, for any tamed $J$. This shows $\mathcal M_{mK_X}$ is a single point. Hence, $\kappa_W^J(X)= 0=\kappa^s(X)$.

When $\kappa^s(X)=1$ or $2$, we know $\mathcal M_{K_X}\ne \emptyset$ when $b^+>1$ and $\mathcal M_{2K_X}\ne \emptyset$ when $b^+=1$. Hence, we will estimate the elements in $\mathcal M_{mK_X}$ or $\mathcal M_{2mK_X}$ respectively. For simplicity of notation, we will only deal with the case when $\mathcal M_{K_X}\ne \emptyset$ and estimate $\mathcal M_{mK_X}$. The other case ({\it i.e.} when $\mathcal M_{2K_X}\ne \emptyset$) is almost identical.

When $\kappa^s(X)=1$, $K_X=K_{X_{\min}}+\sum E_i$, where $K_{X_{min}}^2=0$. Let us first assume $X$ is minimal. Then $K_X=K_{X_{min}}$ is $J$-nef by Lemma 2.1 of \cite{p=h}. Hence $K_X\cdot [C_i]=0$ for any irreducible component of a $J$-holomorphic subvariety $\Theta=\{(C_i, m_i)\}$ in class $mK_X$. In particular, each connected component of $\Theta$ has square $0$. Moreover, by Theorem 1.4 of \cite{LZrc}, each irreducible component is a torus with square zero or a $-2$ sphere.

In fact, choose a $J$-tame symplectic form $\omega$ with integral cohomology class, we know there are at most $mC_{K_X}:=m\lceil\frac{[\omega]\cdot K_X}{\inf_{C} [\omega]\cdot [C]}\rceil$ irreducible components (counted with multiplicities) for any elements in $\mathcal M_{mK_X}$. Here, we take the infimum over all non-trivial $J$-holomorphic curves $C$, which is non-zero by our integrity assumption. For any $\Theta\in \mathcal M_{mK_X}$, choose a point on the smooth part of each irreducible component $C_i$ with multiplicity $m_i$, we know there is a unique $J$-holomorphic subvariety in $\mathcal M_{mK_X}$ passing through them, by Lemma \ref{pointscurve}.  Hence, $X^{mC_{K}}$ contains a parameter space of $\mathcal M_{mK_X}$. Since the dimension varies linearly, this implies $\kappa^J_W(X)\le 1$.

If $X$ is non minimal, for any element $\Theta\in \mathcal M_{mK_X}$, by Theorem 3 of \cite{Sik} (also see the proof of Proposition \ref{ES211}), there exists another tamed almost complex structure $J'$ for which $\Theta$ is $J'$-holomorphic and $J'$ is integrable on a neighborhood of $|\Theta|$. By Corollary 2.10 of \cite{p=h}, we always have a smooth $J'$-holomorphic $-1$ rational curve. Since $K_X\cdot E=-1<0$ for any $-1$ classes, we know this rational curve is a component of $\Theta$. We blow it down, and change the configuration of $\Theta$ correspondingly. Keep doing so until we get a minimal manifold $X'$, {\it i.e.} a manifold containing no smooth spheres of self-intersection $-1$. Hence, $\Theta$ changes to a pseudoholomorphic subvariety $\Theta_0$ in the nef class $mK_{\min}$. In particular, we know each irreducible component of $\Theta_0$ is a torus with square zero or a $-2$ sphere.  This implies each component in original $\Theta$ has non-positive self-intersection. Then we choose a point on each irreducible component $C_i$ of $\Theta$ with multiplicity $m_i$, we know there is a unique $J$-holomorphic subvariety in $\mathcal M_{mK_X}$ passing through them. Since there are at most $mC_{K_X}$ irreducible components for any elements in $\mathcal M_{mK_X}$, we know $X^{mC_{K}}$ contains a parameter space of $\mathcal M_{mK_X}$. This implies $\kappa^J_W(X)\le 1$.

Let us look at the case of $\kappa^s(X)=2$. We also first assume it is minimal. Again, $K_X$ is $J$-nef by Lemma 2.1 of \cite{p=h}.
Then we can choose $mK_X\cdot [C_i]+1$ points with multiplicities $m_i$ on the smooth part of each irreducible components of an element in $\mathcal M_{mK_X}$, we know there is a unique one passing through these points by Lemma \ref{pointscurve}. These points vary in a family of dimension at most $4\sum m_i(mK_X\cdot [C_i]+1)\le 4m^2K_X^2+4m\lceil\frac{[\omega]\cdot K_X}{\min_{C} [\omega]\cdot [C]}\rceil$.  In other words, $\dim \mathcal M_{mK_X}\le 4m^2K_X^2+4m\lceil\frac{[\omega]\cdot K_X}{\min_{C} [\omega]\cdot [C]}\rceil$. Hence $\kappa^J_W(X)\le 2$.

Now we assume $X$ is not minimal. By Lemma 2.1 of \cite{p=h}, if $K_X\cdot [C]<0$ for an irreducible curve $C$, then $C$ is a $-1$ rational curve. Hence, for any $\Theta\in \mathcal M_{mK_X}$, we choose $mK_X\cdot [C_i]+1$ points with multiplicity $m_i$ on the smooth part of each irreducible component $C_i$ which is not a $-1$ rational curve. By Lemma \ref{pointscurve}, we know there is a unique such element in $\mathcal M_{mK_X}$ passing through these points with multiplicities. Now these points vary in a family of dimension at most $4\sum m_i(mK_X\cdot [C_i]+1)$. Notice for any $\Theta\in \mathcal M_{mK_X}$, the sum of the multiplicities $m_i$ for components of $\Theta$
is bounded from above by $mC_{K_X}$. Hence, we have
\begin{align*} \dim \mathcal M_{mK_X} &\le 4\sum_{C_i\notin \mathcal E_{K_J}} m_i(mK_X\cdot [C_i]+1)\\
&=4m^2K_X^2+\sum_{C_i\in \mathcal E_{K_J}}4m_i(m+1)+\sum_{C_i\notin \mathcal E_{K_J}} 4m_i\\
&\le 4m^2K_{\min}^2-4km^2+4m^2C_{K_X}+4mC_{K_X}\\
&=(4K_{\min}^2-4k+4C_{K_X})m^2+4mC_{K_X}
\end{align*}
where $k$ is the number of $-1$ classes in $X$. This implies $\kappa_W^J(X)\le 2$.
\end{proof}

\begin{proof} [Proof of Theorem \ref{JWs}]
It follows from Lemma \ref{0tosec} and Theorem \ref{W<s}.
\end{proof}

The estimates in the above proof might remind the reader about estimates in probabilistic combinatorics. The coefficients of the polynomials in our estimates are not optimal.

\begin{remk}\label{W=s}

In the proof of Theorem \ref{W<s}, we have shown that $\kappa_W^J(X)= \kappa^s(X)$ when $\kappa^s(X)\le 0$.

Moreover, when $(X, J)$ is a tamed almost complex $4$-manifold with $b^+=1$ and $\kappa^s(X)=2$, the equality also holds. The result follows from applying Proposition \ref{bigKod2} to $e=K_{X_{min}}$.

\end{remk}

\begin{cor}\label{W<J}
Let $J$ be a tamed almost complex structure on a complex surface $(X, J_X)$. Then $$\kappa^J(X)\le \kappa^{J_X}(X).$$
\end{cor}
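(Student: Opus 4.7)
The plan is to deduce this corollary by chaining Theorem~\ref{JWs} with the known identification between the symplectic Kodaira dimension and the classical complex Kodaira dimension of a complex surface. Since $J$ is tamed by some symplectic form $\omega$, the underlying $4$-manifold $X$ carries a symplectic structure, so the symplectic Kodaira dimension $\kappa^s(X)$ is defined and, by \cite{L}, independent of the symplectic form used to compute it. Theorem~\ref{JWs} applied to the tamed pair $(X,J)$ therefore immediately gives $\kappa^J(X)\le \kappa^s(X)$, which is the ``hard half'' of the inequality (all the probabilistic combinatorics style estimates of Section~\ref{Weil} are subsumed here).

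Next, because the same underlying manifold $X$ carries the complex structure $J_X$, the theorem of Dorfmeister--Zhang \cite{DZ} recalled in the introduction applies: for any symplectic $4$-manifold that also admits a complex structure, $\kappa^s$ coincides with the classical complex Kodaira dimension of that complex surface. Applied to $(X,J_X)$ with the symplectic form $\omega$ taming $J$, this yields $\kappa^s(X)=\kappa(X,J_X)$, where the right-hand side is the usual Kodaira dimension in the sense of complex geometry.

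Finally, as remarked right after Question~\ref{k=k}, the almost complex Iitaka dimension $\kappa^J$ of Definition~\ref{Iv1} reduces to the classical Iitaka dimension as soon as the almost complex structure is integrable and the line bundle is genuinely holomorphic; applied to $J_X$ and the holomorphic canonical bundle $\mathcal{K}_X$ of $(X,J_X)$, this gives $\kappa^{J_X}(X)=\kappa(X,J_X)$. Stringing the three steps together produces $\kappa^J(X)\le \kappa^s(X)=\kappa(X,J_X)=\kappa^{J_X}(X)$, which is the desired inequality.

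There is essentially no obstacle beyond matching hypotheses correctly. The only point that requires attention is that Theorem~\ref{JWs} demands $J$ to be tamed (which is our hypothesis), whereas the invocation of \cite{DZ} only needs $X$ to support some complex structure (which is supplied by $J_X$), and in particular we do not need $J_X$ itself to tame any symplectic form. Thus the corollary follows formally, with all the analytic content already packaged into Theorem~\ref{JWs} and \cite{DZ}.
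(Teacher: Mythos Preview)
Your proof is correct and follows essentially the same approach as the paper: apply Theorem~\ref{JWs} to get $\kappa^J(X)\le \kappa^s(X)$, then invoke \cite{DZ} to identify $\kappa^s(X)$ with $\kappa^{J_X}(X)$. The only minor point is that the remark you cite (that $\kappa^J$ reduces to the classical Kodaira dimension for integrable $J$) appears just \emph{before} Question~\ref{k=k} rather than after it.
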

\begin{proof}
By \cite{DZ}, $\kappa^{J_X}(X)=\kappa^s(X)$ for any $4$-manifold admitting both complex and symplectic structures. Then the result follows from Theorem \ref{JWs}.
\end{proof}

\section{Vanishing of plurigenera}\label{van}
A negative line bundle on a compact complex manifold is a holomorphic line bundle whose Chern
class is a negative class. The Iitaka dimension of a negative line bundle is always negative infinity. For a complex line bundle $L$ over an almost complex manifold, we would prove similar vanishing results if it satisfies certain negative condition. First, for tamed almost complex structures, we have the following.

\begin{prop}\label{neginfty}
Let  $L$ be a complex line bundle over a compact almost complex $4$-manifold $(X, J)$. If there is a $J$-tamed symplectic form $\omega$ such that $[\omega]\cdot c_1(L)<0$, then for any bundle almost complex structure $\mathcal J$ on $L$, $P_{L, \mathcal J}=0$ and $\kappa^J(X, (L, \mathcal J))=-\infty$.
\end{prop}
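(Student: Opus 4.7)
The plan is to use the geometric description of pseudoholomorphic sections in terms of their zero loci, combined with the positivity of $\omega$ on $J$-holomorphic curves, to rule out non-trivial sections in every positive tensor power of $L$.

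First I would fix $m\ge 1$ and let $\mathcal J^{\otimes m}$ be the bundle almost complex structure on $L^{\otimes m}$ induced from $\mathcal J$ by the Leibniz rule recalled in Section 2. Assume for contradiction that $s\in H^0(X,(L^{\otimes m},\mathcal J^{\otimes m}))$ is non-trivial. By Corollary 1.3 of \cite{Z2}, the zero locus $Z(s)$ is then a $J$-holomorphic $1$-subvariety whose homology class equals $c_1(L^{\otimes m})=m\,c_1(L)$. The degenerate case $Z(s)=\emptyset$ would force $L^{\otimes m}$ to be a trivial complex line bundle, hence $m\,c_1(L)=0$, which already contradicts the hypothesis $[\omega]\cdot c_1(L)<0$.

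So we may write $Z(s)=\{(C_i,n_i)\}$ with at least one irreducible component. Because $\omega$ tames $J$, it restricts to a strictly positive area form on the smooth locus of each irreducible $J$-holomorphic curve $C_i$, giving
\[
m\,[\omega]\cdot c_1(L)=[\omega]\cdot[Z(s)]=\sum_i n_i\int_{C_i}\omega>0,
\]
contradicting $[\omega]\cdot c_1(L)<0$. Hence $H^0(X,(L^{\otimes m},\mathcal J^{\otimes m}))=0$ for every $m\ge 1$. Setting $m=1$ gives $P_{L,\mathcal J}=0$, and Definition \ref{Iv1} then yields $\kappa^J(X,(L,\mathcal J))=-\infty$.

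I do not expect any real analytic obstacle: both ingredients (the zero locus is a $J$-holomorphic $1$-subvariety in class $c_1$, and the $\omega$-tameness pairing is strictly positive on any non-empty $J$-holomorphic $1$-subvariety) are already available in the paper, so the proof reduces to this numerical contradiction running uniformly in $m$. The only point requiring a moment of care is the vacuous case of a nowhere-vanishing section, which is absorbed into the same inequality as above.
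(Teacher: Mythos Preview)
Your proof is correct and follows essentially the same approach as the paper: both argue that a nonzero pseudoholomorphic section would have zero locus a $J$-holomorphic $1$-subvariety in class $c_1(L^{\otimes m})$, which pairs positively with the taming symplectic form and contradicts the hypothesis $[\omega]\cdot c_1(L)<0$. Your write-up is slightly more explicit in disposing of the nowhere-vanishing case, but the argument is otherwise identical to the paper's.
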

\begin{proof}
If $P_{L, \mathcal J}>0$, then there is a nonzero section in $H^0(X,(L,\mathcal J))$. By Corollary 4.2 in \cite{CZ} and Corollary 1.3 in \cite{Z2}, its zero locus is a $J$-holomorphic $1$-subvariety in class $c_1(L)$. However, $c_1(L)\cdot [\omega]<0$ which contradicts to the tameness of $J$.

Similarly $P_{L^{\otimes m}, \mathcal J}=0$ for $m>1$ and $\kappa^J(M, (L, \mathcal J))=-\infty$.
\end{proof}

In particular, if  there is a $J$-tamed symplectic form $\omega$ on an almost complex $4$-manifold $(X,J)$ such that $[\omega]=-c_1(L)$, then for any bundle almost complex structure $\mathcal J$ on $L$, $P_{L, \mathcal J}=0$ and $\kappa^J(M, (L, \mathcal J))=-\infty$. As a specific example, any tamed $J$ on $\mathbb CP^2$ has the plurigenera $P_m=0, m>0,$ and the Kodaira dimension $\kappa^J=-\infty$ (this also follows from Corollary \ref{W<J}).

 To generalize the above to higher dimensions, one effective way is to prove a result similar to Corollary 1.3 in \cite{Z2} for almost complex submanifolds with any codimensions. Another way is to apply the analytic method. We prove the following result which only uses Theorem 3.8 in \cite{Z2} and a Bochner type formula on almost complex manifolds.

\begin{thm} \label{c1<0van}

Let  $L$ be a complex line bundle over a compact almost complex $2n$-manifold $(X, J)$. If there is a $J$-compatible symplectic form $\omega$ such that $[\omega]^{n-1}\cdot c_1(L)<0$, then for any bundle almost complex structure $\mathcal J$ on $L$, $P_{L, \mathcal J}=0$ and $\kappa^J(X, (L, \mathcal J))=\kappa_J(X, (L, \mathcal J))=-\infty$.
\end{thm}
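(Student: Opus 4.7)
The plan is to extend the dimension-$4$ topological argument of Proposition~\ref{neginfty} to arbitrary dimension via an analytic Bochner--Kodaira/Poincar\'e--Lelong argument on almost Hermitian manifolds. Since $[\omega]^{n-1}\cdot c_1(L^{\otimes m}) = m\,[\omega]^{n-1}\cdot c_1(L) < 0$ for every $m\ge 1$ and both Iitaka dimensions are $-\infty$ as soon as every $P_{L^{\otimes m},\mathcal J'}$ vanishes, it suffices to derive a contradiction from a hypothetical non-trivial pseudoholomorphic section $s\in H^0(X,(L,\mathcal J))$ for an arbitrary bundle almost complex structure $\mathcal J$.

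Equip $L$ with a Hermitian metric $h$ and let $\nabla$ be the unique Chern connection with $\nabla^{(0,1)}=\bar\partial_{\mathcal J}$ from Lemma~3.3 of \cite{CZ}, with curvature $F_\nabla$. Working locally, solve the linear elliptic equation $\bar\partial_{\mathcal J}\sigma=0$ for a non-vanishing frame $\sigma$ of $L$, and write $s=f\sigma$. Then pseudoholomorphy of $s$ reduces to $\bar\partial f=0$ as a scalar equation. Using the type decomposition $d=\bar A+\partial+\bar\partial+A$ of the exterior derivative on an almost complex manifold (with $A,\bar A$ the algebraic operators coming from the Nijenhuis tensor) together with $d^2=0$, one checks that $\partial\bar\partial\log|f|^2=0$ on $\{f\ne 0\}$; and, in the pseudoholomorphic frame, the connection form is $\theta=\partial\log h(\sigma,\sigma)$, so $F_\nabla^{(1,1)}=\bar\partial\partial\log h(\sigma,\sigma)=-\partial\bar\partial\log h(\sigma,\sigma)$. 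Combining these identities, on $X\setminus Z(s)$,
$$\partial\bar\partial\log|s|^2 \;=\; -F_\nabla^{(1,1)}.$$

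Because $\omega$ is closed and of type $(1,1)$, and on a $2n$-manifold the non-$(1,1)$ components of $d\bar\partial u$ vanish under wedge with $\omega^{n-1}$ for dimension/type reasons, Stokes' theorem gives $\int_X\partial\bar\partial u\wedge\omega^{n-1}=0$ for every smooth $u$. Apply this to the regularization $u_\epsilon:=\log(|s|^2+\epsilon)$ and let $\epsilon\to 0$: by Theorem~3.8 of \cite{Z2}, the zero locus $Z(s)$ has locally finite $(2n-2)$-dimensional Hausdorff measure and so defines a positive current of integration on $(n-1,n-1)$-forms. A Poincar\'e--Lelong-type localization of the boundary contribution around $Z(s)$ then identifies the limit as a positive multiple of $\int_{Z(s)}\omega^{n-1}$ on one side and of $[\omega]^{n-1}\cdot c_1(L)$ on the other. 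Since $\omega$ is $J$-compatible, $\omega^{n-1}$ restricts to a non-negative volume form on the smooth locus of the $J$-invariant subvariety $Z(s)$, so $\int_{Z(s)}\omega^{n-1}\ge 0$. Hence $[\omega]^{n-1}\cdot c_1(L)\ge 0$, contradicting the hypothesis and proving the theorem.

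The main obstacle is making the Poincar\'e--Lelong localization rigorous in the almost Hermitian setting: one must show that as $\epsilon\to 0$ the integral $\int_X\partial\bar\partial u_\epsilon\wedge\omega^{n-1}$ splits cleanly into a curvature term of the correct sign plus $2\pi$ times the current-of-integration contribution. This is precisely where Theorem~3.8 of \cite{Z2} enters, providing the Hausdorff-measure estimate on $Z(s)$ needed to control the singular set of $\log|s|^2$, and where $J$-compatibility of $\omega$ (rather than merely $J$-tameness) delivers the positivity of $\omega|_{Z(s)^{\mathrm{reg}}}$ used to sign the boundary term. A secondary technical input is local existence of pseudoholomorphic frames for a line bundle, which for rank one is a scalar linear elliptic solvability question.
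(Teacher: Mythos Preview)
Your overall strategy---reduce to showing $P_{L,\mathcal J}=0$ and then derive a sign contradiction via a Poincar\'e--Lelong integration argument---is genuinely different from the paper's. The paper instead derives a pointwise Bochner identity $\Delta_\omega|s|_h^2=|\nabla s|_h^2-\sqrt{-1}\,\tr_\omega\Theta\cdot|s|_h^2$ on all of $X$ (using only that $Z(s)$ has measure zero), uses the hypothesis together with $d\omega=0$ to solve a Poisson equation and conformally change $h$ so that $\sqrt{-1}\,\tr_\omega\Theta'$ becomes a negative \emph{constant}, and then applies the maximum principle at a maximum of $|s|_{h'}^2$ to force $s\equiv 0$. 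No analysis of $Z(s)$ beyond its having measure zero is needed.

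Your proposal has two genuine gaps. First, the claim that local non-vanishing pseudoholomorphic frames exist is not a ``scalar linear elliptic solvability question'': writing $\sigma=e^g\sigma_0$, the equation $\bar\partial_{\mathcal J}\sigma=0$ becomes $\bar\partial g=-\alpha$ for a prescribed $(0,1)$-form $\alpha$, which for $n\ge 2$ is an \emph{overdetermined} first-order system whose local solvability is equivalent to a $\bar\partial$-Poincar\'e lemma that generally fails for non-integrable $J$. (This step can be bypassed by using $s$ itself as a frame on $X\setminus Z(s)$, which is exactly what the paper does.) Second, and more seriously, the Poincar\'e--Lelong localization you describe is not justified by the Hausdorff-measure bound of Theorem~3.8 in \cite{Z2} alone. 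To define a current of integration $[Z(s)]$ and identify it with the $\epsilon\to 0$ limit of $\int_X\partial\bar\partial u_\epsilon\wedge\omega^{n-1}$ one needs at minimum that $\log|s|^2\in L^1_{\mathrm{loc}}$ and that $Z(s)$ is rectifiable with $J$-invariant approximate tangent planes almost everywhere; none of this follows from a bound on $\mathcal H^{2n-2}(Z(s))$. In the integrable case these facts come from the local model $s\sim z^k\cdot(\text{unit})$, which in turn rests on the existence of holomorphic frames---precisely the first gap. The paper's Bochner/conformal-change/maximum-principle route sidesteps both difficulties entirely, at the cost of not producing any Poincar\'e--Lelong-type equation of currents.
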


\begin{proof}

We prove it by contradiction. Assume that $s\in H^0(X,(L,\mathcal J))$ is a nonzero section. Denote $V=\{x\in X| s(x)=0\}$. By Theorem 3.8 in \cite{Z2}, $V$ is a closed set with Hausdorff dimension $2n-2$ (if nonempty). In particular, $V$ has measure zero and the complement $U=X\setminus V$ is a densely open set.

Assume that $h$ is an arbitrary Hermitian metric on $L$ with $|s|_h^2=h(s,s)\geq 0$. Let $g$ be the almost K\"ahler metric on $X$ determined by $\omega$ and $J$, and denote $\nabla$ the unique Hermitian connection on $L$ such that $\nabla^{0,1}=\bar{\p}_L$ (see Lemma 3.3 in \cite{CZ}). On $U$, we have $\nabla s=\nabla^{1,0}s=\xi\otimes s$, for some $\xi\in \Lambda^{1,0}(U)$. From $$
d|s|_h^2=\nabla |s|_h^2=h(\nabla s, s)+h(s, \nabla s)=\xi |s|_h^2+\bar{\xi}|s|_h^2,
$$
we get that $\xi=\dfrac{\p |s|_h^2}{|s|_h^2}=\p \log |s|_h^2$.

Denote the curvature form of $h$ on $L$ by $\Theta$. Then $\Theta|_U=d\xi=d\p \log |s|_h^2$. By the Chern-Weil theory, we have $c_1(L)=[\dfrac{\sqrt{-1}}{2\pi}\Theta]$.
Also, on $U$, we have \begin{align*}
\sqrt{-1}\tr_\omega(\bar{\p}\p\log |s|_h^2) &=\sqrt{-1}(\dfrac{tr_\omega (\bar{\p}\p |s|_h^2)|s|_h^2 - \tr_\omega (\bar{\p}|s|_h^2\wedge \p |s|_h^2)}{|s|_h^4})\\
&=\sqrt{-1}\dfrac{\tr_\omega (\bar{\p}\p |s|_h^2)}{|s|_h^2}+|\xi|^2,
\end{align*} where $\tr_\omega\cdot$ denotes the contraction operator on a $(1,1)$ form by $\omega^{-1}$.
So we get the identity \begin{align} -\sqrt{-1}\tr_\omega (\bar{\p}\p |s|_h^2)=|\nabla s|_h^2-\sqrt{-1}\tr_\omega(\bar{\p}\p\log |s|_h^2)|s|_h^2. \label{111} \end{align}
For any function $f$, since $d\p f=-d\bar{\p}f$, we have $\p\bar{\p}f=-\bar{\p}\p f$. Denote $\Delta_\omega=-\sqrt{-1}\tr_\omega \bar{\p}\p=\sqrt{-1}\tr_\omega \p\bar{\p}$ acting on smooth functions of $X$. It is an elliptic operator which differs from the Levi-Civita Laplacian by first order terms (see Lemma 3.2 in \cite{T}). So the maximum principle also holds in this case. Extend $\tr_\omega$ to any 2-form $\phi$ by defining $\tr_\omega \phi=\tr_\omega \phi^{1,1}$, where $\phi^{1,1}$ is the $(1,1)$ component of $\phi$. From $(\ref{111})$ and the denseness of $U$, we obtain the Bochner type formula on $X$ (in the integrable case, see \cite{Y1} for example): \begin{align} \Delta_\omega |s|_h^2=|\nabla s|_h^2-\sqrt{-1}\tr_\omega\Theta|s|_h^2 \label{Bo} \end{align}
Now by assumption, we have
\begin{align*}
0>[\omega]^{n-1}\cdot c_1(L) &=\int_X \dfrac{\sqrt{-1}}{2\pi}\Theta\wedge \omega^{n-1}=\int_X \dfrac{\sqrt{-1}}{2\pi}\Theta^{1,1}\wedge \omega^{n-1}\\
&=\int_X \dfrac{\sqrt{-1}\tr_{\omega}\Theta^{1,1}}{2n\pi} \omega^{n}=\int_X \dfrac{\sqrt{-1}\tr_{\omega}\Theta}{2n\pi} \omega^{n}
\end{align*}
This gives that the average of $\sqrt{-1}\tr_\omega\Theta$ is negative. Next, we do a conformal change of the metric $h$ to make it to be a negative constant (see similar arguments in \cite{Gau, Y1}). To obtain this, for any smooth real function $\varphi$ on $X$, $h'=e^{\varphi}h$ is also a Hermitian metric on $L$. The Hermitian connection of $h'$ has connection one form $\xi'=\xi+\p \varphi$ and curvature form $\Theta'=\Theta+d\p \varphi$. Then on $X$, $\sqrt{-1}\tr_\omega\Theta'=\sqrt{-1}\tr_\omega\Theta-\Delta_\omega \varphi$. Let $k=\int_X \dfrac{\sqrt{-1}\tr_{\omega}\Theta}{2n\pi} \omega^{n}<0$. Since $\omega$ is closed (a Gauduchon metric is sufficient), $\int_X \Delta_\omega f\omega^{n}=0$ for any smooth function $f$. By the Hodge decomposition of smooth functions with respect to $\Delta_\omega$, the Poisson equation $\Delta_\omega \varphi=\sqrt{-1}\tr_\omega\Theta-\dfrac{2nk\pi}{\int_X \omega^n}$ has a smooth solution. Choosing this solution $\varphi$, we get that $$\sqrt{-1}\tr_\omega\Theta'=\dfrac{2nk\pi}{\int_X \omega^n}=k_1,$$ where $k_1$ is a negative constant.

Return to the Bochner formula for the Hermitian metric $h'$, we have $$\Delta_\omega |s|_{h'}^2=|\nabla s|_{h'}^2-k_1 |s|_{h'}^2.$$
Since $X$ is compact, there is a maximum point $p$ of $|s|_{h'}^2$ with $|s|_{h'}^2(p)>0$. By the maximum principle, $\Delta_\omega |s|_{h'}^2(p)\leq 0$, but $(|\nabla s|_{h'}^2-k_1 |s|_{h'}^2)(p)>0$, a contradiction. Therefore, $H^0(X, (L,\mathcal J))=0$.

As $c_1(L^{\otimes m})\cdot [\omega]^{n-1}=mc_1(L)\cdot [\omega]^{n-1}<0$, we similarly get that $H^0(X, (L^{\otimes m},\mathcal J))=0$ and $\kappa^J(X, (L, \mathcal J))=-\infty$.
\end{proof}

From the proof above, we also prove the following vanishing of plurigenera, which is a generalization of the known results in the complex setting (in the K\"ahler case by Yau \cite{Yau} and in more general Hermitian case by Yang \cite{Y}).
\begin{thm} \label{metric}
Let $(X,J)$ be a compact almost complex manifolds. If one of the following is satisfied:
\begin{enumerate}
\item $X$ holds a Hermitian metric with positive Chern scalar curvature everywhere,
\item $X$ holds a Gauduchon metric with positive total Chern scalar curvature,
\end{enumerate}
then $\kappa^J(X)=\kappa_J(X)=-\infty$.

\end{thm}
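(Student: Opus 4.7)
The plan is to run the Bochner-type argument from the proof of Theorem \ref{c1<0van} with the line bundle specialized to $\mathcal{K}_X^{\otimes m}$, equipped with the Hermitian metric $h$ induced from the ambient almost Hermitian metric $g$. The key input is the standard identification $\sqrt{-1}\tr_\omega\Theta_m = -m\,s_C$, where $\Theta_m$ denotes the curvature of the Chern connection on $\mathcal{K}_X^{\otimes m}$ (unique by Lemma 3.3 in \cite{CZ}) and $s_C$ is the Chern scalar curvature of $g$. Since only the $(1,1)$-part of $\Theta_m$ enters the trace, this identification carries over verbatim from the integrable setting.

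Suppose for contradiction that $H^0(X,\mathcal{K}_X^{\otimes m}) \ne 0$ for some $m \ge 1$, and let $s$ be a nonzero section. Exactly as in the derivation of identity (\ref{Bo}), on the complement of the zero locus of $s$ I would obtain the Bochner formula
$$\Delta_\omega|s|_h^2 = |\nabla s|_h^2 + m\,s_C\,|s|_h^2.$$
By Theorem 3.8 of \cite{Z2} the zero locus is closed of Hausdorff codimension two in $X$, so the identity extends continuously to all of $X$.

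In case (1), $s_C$ is pointwise positive, so the right-hand side above is strictly positive wherever $s \ne 0$; evaluating at a global maximum of $|s|_h^2$ and applying the maximum principle for $\Delta_\omega$ (Lemma 3.2 in \cite{T}) yields a contradiction. In case (2), I would perform the conformal trick used in the proof of Theorem \ref{c1<0van}: replace $h$ by $h' = e^\varphi h$, where $\varphi$ solves the Poisson equation $\Delta_\omega\varphi = -m\,s_C - k_1$ with $k_1 = -\frac{m\int_X s_C\,\omega^n}{\int_X \omega^n} < 0$. Solvability requires $\int_X \Delta_\omega f\,\omega^n = 0$ for every smooth $f$, which follows from the Gauduchon condition $\partial\bar\partial\omega^{n-1} = 0$ together with Hodge decomposition for $\Delta_\omega$. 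The new curvature satisfies $\sqrt{-1}\tr_\omega\Theta'_m \equiv k_1 < 0$, and the Bochner identity with respect to $h'$ reads $\Delta_\omega|s|_{h'}^2 = |\nabla s|_{h'}^2 - k_1|s|_{h'}^2$, whose right-hand side is strictly positive at a maximum of $|s|_{h'}^2$, again contradicting the maximum principle.

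The vanishing of $H^0(X,\mathcal{K}_X^{\otimes m})$ for every $m \ge 1$ then gives $\kappa^J(X) = \kappa_J(X) = -\infty$ directly from the two definitions. The main technical point I expect is verifying the curvature identity $\sqrt{-1}\tr_\omega\Theta_m = -m\,s_C$ in the almost complex setting, but once one works with the Chern connection provided by Lemma 3.3 in \cite{CZ}, the computation is essentially formal, mirroring the Hermitian case treated by Yau \cite{Yau} and Yang \cite{Y}.
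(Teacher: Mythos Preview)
Your proposal is correct and follows essentially the same approach as the paper: both specialize the Bochner identity (\ref{Bo}) to $\mathcal K_X^{\otimes m}$ using the curvature identity $\sqrt{-1}\tr_\omega\Theta=-s^C$ (cited from \cite{T}), then apply the maximum principle directly in case (1), and in case (2) first use the Gauduchon condition to solve the Poisson equation and conformally adjust the bundle metric so that the traced curvature becomes a negative constant. The only cosmetic difference is that the paper sets up the Poisson equation on $\mathcal K_X$ rather than carrying the factor $m$ through, but the arguments are otherwise identical.
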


\begin{proof}
Let $\omega$ be an almost Hermitian metric on $X$. It naturally induces a Hermitian metric on the canonical bundle $\mathcal K$ as discussed in Section 3. The unique Hermitian connection $\nabla$ on $\mathcal K$ is induced by the Chern connection (see {\it e.g.} Remark 3.4 in \cite{CZ}). Denote $\Theta$ the curvature 2-form. It is well known \cite{T} that $-\sqrt{-1}\tr_\omega \Theta=s^C$ where $s^C$ is the Chern scalar curvature of the Chern connection. Let $\sigma\in H^0(X,\mathcal K_X^{\otimes m})$. The same calculation as above gives that
$$\Delta_\omega |\sigma|^2=|\nabla \sigma|^2+ms^C|\sigma|^2.$$ If $s^C>0$, using the maximum principle again, we derive that $\sigma=0$.

Next, assume that the total Chern scalar curvature $k=\int_X s^C \omega^n>0$ with $\omega$ being a Gauduchon metric, i.e. $\p\bar{\p} \omega^{n-1}=0$. Denote $\Delta_\omega=\sqrt{-1}\tr_\omega \p\bar{\p}$ the generalized Laplacian. For any smooth function $f$, as $$\int_X \Delta_\omega f \omega^n=\int_X \sqrt{-1}n\p\bar{\p}f\wedge \omega^{n-1}=\int_X \sqrt{-1}nf\wedge \p\bar{\p}\omega^{n-1}=0,$$
where we use the Gauduchon condition in the last step, the Poisson equation $\Delta_\omega \varphi=-s^C+\frac{k}{\int_X \omega^n}$ has a smooth solution $\varphi$. Define a Hermitian metric on $\mathcal K^{\otimes m}$ by $h'=e^{\varphi}h_\omega$, where $h_\omega$ denotes the Hermitian metric induced by $\omega$. Then the curvature form $\Theta'$ of $h'$ has $\sqrt{-1}\tr_\omega \Theta'=\sqrt{-1}\tr_\omega \Theta-\Delta_\omega \varphi=-\frac{k}{\int_X \omega^n}<0$. Let $\sigma\in H^0(X,\mathcal K^{\otimes m})$. The Bochner formula with respect to $h'$ then reads:
$$\Delta_\omega |\sigma|_{h'}^2=|\nabla \sigma|_{h'}^2+\frac{k}{\int_X \omega^n}|\sigma|_{h'}^2.$$ Therefore, the maximum principle gives that $\sigma=0$. We finish the proof.
\end{proof}

\section{Further discussion and open problems}\label{pro}
In this last section, we would discuss some speculations and open problems generated from this series of papers. For more discussions and in particular the relation with the intersection theory of almost complex manifolds and other aspects of almost complex geometry, we recommend \cite{Ziccm}.
\subsection{Hodge theory}\label{secHodge}
It would be very interesting to compare the cohomology group $H^0(X, \mathcal K_X)\cong H^{2,0}_{\bar\partial}(X)$ with the $J$-anti-invariant cohomology $H_J^-(X)$ defined in \cite{LZcag} when $\dim X=4$. Both are birational invariants \cite{CZ, BZ}. The zero loci of elements in both groups support $J$-holomorphic subvarieties in the canonical class. For $H_J^-(X)$, both statements are shown in \cite{BZ}. The non-integrable almost complex structures of Section 6.3 of \cite{CZ} on $T^2\times S$ have $\dim H^0(X, \mathcal K_X)\ge g-1$. On the other hand, Conjecture 2.5 in \cite{DLZ} says the dimension of the $J$-anti-invariant cohomology $H_J^-(X)$ is greater than 2 only if $J$ is integrable. It would be interesting to calculate $H_J^-(X)$ for these examples explicitly.

In higher dimensions, we could similarly define a cohomology group $MH_J^-(X)$ ($M$ stands for middle). We look at the bundle of real parts of $(n,0)$ forms $\Lambda_{\R}^{n,0}$ where $n=\dim_{\C} X$, whose space of sections is denoted by $\Omega_{\R}^{n,0}$. The almost complex structure $J$ on $X$ induces a complex line bundle structure on $\Lambda_{\R}^{n,0}$. It can be described concretely by its action on a section $\beta $ as $J\beta(v_1, v_2, \cdots, v_n):=-\beta(Jv_1, v_2, \cdots, v_n)$. We define $MH_J^-(X)$ to be the subgroup of $H^{n}(X, \R)$ whose element has a representative in $\Omega_{\R}^{n,0}$.
 For any Riemannian metric $g$ compatible with $J$, $\Lambda_{\R}^{n,0}$ is a subbundle of  $\Lambda_g^+$. Hence, a closed form in $\Omega_{\R}^{n,0}$ is a self-dual harmonic form. As discussed in \cite{BZ}, any representative of an element in $MH_J^-(X)$ is a $J$-holomorphic subvariety of complex dimension $n-1$ in the canonical class up to Question 3.9 in \cite{Z2}. It would be interesting to compare $H^0(X, \mathcal K_X)\cong H^{n,0}_{\bar\partial}(X)$ with  $MH_J^-(X)$.

It would be great to understand more about the groups $\mathcal{H}_{\bar{\p}_{E}}^{(p,q)}(X, E)$. When $E$ is a holomorphic bundle over a complex manifold $X$, they are isomorphic to the Dolbeault cohomology groups of holomorphic bundles by Hodge theory. Especially, when $E$ is the trivial line bundle, these are just the classical Dolbeault cohomology groups. When $q=0$ or $\dim_{\C}X$, the groups depend only on $J$ and $\bar{\p}_E$ \cite{CZ}, in particular, it is independent of the defining Hermitian metric. Whether its dimension is also an invariant of the almost complex structure $J$ when $E$ is a trivial bundle and $0<q<\dim_{\C}X$ is a famous question of Kodaira-Spencer in Hirzebruch's 1954 problem list \cite{Hir}, which is answered negatively recently by Tom Holt and the second author \cite{HZ}. The method works very effectively to compute $\mathcal{H}_{\bar{\p}}^{(p,q)}(X)$ explicitly for nilmanifolds. It would be great to use their techniques to compute general $\mathcal{H}_{\bar{\p}_{E}}^{(p,q)}(X, E)$.

Many aspects of Dolbeault cohomology groups for complex manifolds extend to our setting. For example, it is clear from the definition that $\mathcal{H}_{\bar{\p}_E}^{(n,q)}(X, E)\cong \mathcal{H}_{\bar{\p}_E}^{(0,q)}(X, \mathcal K_X\otimes E)$. When $q=0$ and $E$ is the trivial line bundle, it is equal to $H^0(X, \mathcal K_X)$. We also have Serre duality for the groups $\mathcal{H}_{\bar{\p}_{E}}^{(p,q)}(X, E)$, namely, $\mathcal{H}_{\bar{\p}_{E}}^{(p,q)}(X, E)\cong (\mathcal{H}_{\bar{\p}_{E^*}}^{(n-p,n-q)}(X, E^*))^*$ (see Proposition 3.7 in \cite{CZ}).

Another interesting direction is to show the dimension of $\mathcal{H}_{\bar{\p}}^{(p,0)}(X):=\mathcal{H}_{\bar{\p}_{\mathcal O}}^{(p,0)}(X, \mathcal O)$ is a birational invariant, namely it is invariant under degree $1$ pseudoholomorphic maps. It is shown in Theorem 5.5 in \cite{CZ} for $\dim X=4$.

\subsection{Comparison of Iitaka dimensions}
We have defined three versions of Iitaka dimension for almost complex manifolds. The versions $\kappa^J$ and $\kappa_J$ are defined for arbitrary almost complex structure $J$. The last version $\kappa^J_W$ is defined for tamed almost complex structures on $4$-manifolds. One is able to formulate a similar definition of $\kappa^J_W$ in higher dimensions. Generalization of positivity of intersection in \cite{Z2} to singular subvarieties should lead to similar results as in dimension $4$.

Question \ref{k=k} asks whether $\kappa_J(X, (L, \mathcal J))=\kappa^J(X, (L, \mathcal J))$. On the other hand, $\kappa^J_W(X, c_1(L))$ is in general larger. By definition, $\kappa_J$ takes value from $-\infty$ and integers $0, 1, \cdots, n$. However, we do not know whether $\kappa^J(X, (L, \mathcal J))\le \dim_{\C}X$ and whether it always takes integer values, although we expect these are true because of Question \ref{k=k}. We also would like to show $\kappa^J_W(X, e)\le 2$ in general. By Theorem \ref{W<s}, It is true for the corresponding Kodaira dimension, {\it i.e.} when $e=K_X$. In particular, it implies $\kappa^J(X)\le 2$ when $(X, J)$ is a tamed almost complex manifold.

The second version of our Kodaira dimensions $\kappa_J$ is the simplest invariant provided by the image of $\Phi_{L, \mathcal J}$. We would like to know more about this image. It has been shown that $\Phi_{L, \mathcal J}(X\setminus B)$ is an open analytic subvariety. It is believed to be true that $\overline{\Phi_{L, \mathcal J}(X\setminus B)}$ is a projective subvariety, as shown in Corollary \ref{proj} when $\dim_{\R} X=4$.

The study of this problem would also tell us what a rational map in almost complex setting should be like, as we have shown in Proposition \ref{pcm} for dimension $4$. In general, the study of pseudoholomorphic rational maps, {\it i.e.} pseudoholomorphic maps defined on the complement of a pseudoholomorphic subvariety in an almost complex manifold, is very important. The fundamental question is whether the closure of its image and the complement of the image in it are pseudoholomorphic subvarieties in the target almost complex manifold.

In addition, we would like to know whether the transcendental degree of the quotient field of the section ring is equal to $\kappa_J$. In particular, we expect (the closure of) the image of the pluricanonical map $\Phi_m$ would be stable for large $m$, and equal to $Proj (R(X, K_X))$.

\subsection{Extension of pseudoholomorphic sections}
Hartogs type extension is very important in studying our Kodaira dimensions. For example, we would like to know

\begin{que}\label{Hartogs}
If $(L, \mathcal J)$ is a pseudoholomorphic bundle over $(X, J)$ and $B\subset X$ is a pseudoholomprhic subvariety of complex codimension at least two, does any pseudoholomorphic section of $(L, \mathcal J)$ over $X\setminus B$ extend to a pseudoholomorphic section over $X$?
\end{que}

Theorem 5.2 in \cite{CZ} establishes this Hartogs extension when $\dim X=4$. In general, Question \ref{Hartogs} would help to prove the invariance of $\kappa^J$ and $\kappa_J$ under degree $1$ pseudoholomorphic maps, {\it i.e.} the birational invariance of $\kappa^J$ and $\kappa_J$, in all dimensions. It would be helpful in obtaining other results, {\it e.g.} Question \ref{k=k}. It would also help to answer the following question.

\begin{que}
Let $f:X\dashrightarrow M$ be a generically surjective pseudoholomorphic rational map such that $\dim X=\dim M$. Do we have $\kappa^J(X)\ge \kappa^J(M)$.
\end{que}
This answer is even not known when $f$ is defined on whole $X$. Notice we do not have the generalization of Iitaka conjecture. For any pseudoholomorphic surface bundle over surface $f: X\rightarrow M$ whose total space $X$ is not complex and both base and fiber have genus greater than one, we have $\kappa^J(F)+\kappa^J(M)=2>\kappa^J(X)$. Hence,  the additivity of Kodaira dimension, as explored in \cite{LZadd, Z1} for other kinds of Kodaira dimensions, would not hold for our almost complex Kodaira dimensions.

Another important type of extension result in complex geometry is the extendability of analytic subvarieties, for example Theorem \ref{Shiff}. We expect it also holds for pseudoholomorphic subvarieties. More generally, we would like to study the extendability of pseudoholomorphic maps between almost complex manifolds, generalizing the classical Picard theorem.

\subsection{Examples}
We would like to calculate more examples. We are particularly interested in finding more or even obtaining structural results for non-integrable almost complex structures with large Kodaira (or Iitaka) dimensions. For instance, do we have a version of Iitaka fibration? Do we know $\kappa^J=n-1$ only when the manifold is a pseudoholomorphic elliptic fibration over a complex manifolds? In dimension $4$, we have the second part of Theorem \ref{ellintro}.

In Section 6.3 of \cite{CZ}, we have examples of non-integrable almost complex structures with large Kodaira dimension. These examples are deformations of complex structures. It would be very interesting to know some non-integrable almost complex structures on symplectic non-K\"ahler manifolds with large Kodaira dimension. Some special pseudoholomorphic elliptic fibrations might work.

The values of Kodaira dimensions and dimensions of holomorphic $p$-forms for some special ambient space are also very interesting. In particular, we are interested in almost complex structures on $\mathbb{C}P^n$ and $S^6$. On $S^6$, there is a standard non-integrable almost complex structure denoted by $\mathsf J$. In Section 7 of \cite{CZ}, we compute Kodaira dimensions of $\mathsf J$ (and then $-\mathsf J$) as well as the value of the Hodge numbers $h^{1,0}, h^{2,0}$. Similar results are proven to hold on strictly nearly K\"{a}hler 6-manifolds (see \cite{CW}). It is known that there are two connected components of all the almost complex structures on $S^6$ \cite{Br3}, containing $\mathsf J$ and $-\mathsf J$ respectively. We would like to know whether our calculation would help to compute Kodaira dimension for other almost complex structures on $S^6$
and almost complex structures on other homogeneous spaces.

We would like to calculate examples of the Iitaka dimension for a general complex line bundle.  For example, we are interested in the following almost complex structures on homogeneous spaces.

Assume that $G$ is a compact connected semisimple Lie group and $T\subset G$ a maximal torus. Let $\mathfrak g$ and $\mathfrak t$ be Lie algebras of $G$ and $T$ respectively.
The set of all roots $R$ is by definition constituted of all elements $\alpha\in i\mathfrak t\subset \mathfrak t\otimes \C$ such that there exists a nonzero $X\in \mathfrak g\otimes \C$ with $[H, X]=\langle \alpha, H\rangle X$ for all $H\in \mathfrak t$, or equivalently, the characters of the corresponding irreducible representation of $T$, if we identify the character group $Hom(T, \C^*)$ with $(\C^*)^{\dim T}\subset \mathfrak t\otimes \C$. The eigenspaces are denoted $\mathfrak g_{\alpha}$.

The tangent space of $G/T$ at the coset $eT$ decomposes, in terms of Lie algebra, as $$(\mathfrak g/\mathfrak t)\otimes \C=\oplus_{\alpha\in R}\mathfrak g_{\alpha}.$$
By left action, such a decomposition induces a decomposition of $T^*(G/T)\otimes \C$. By choosing a regular element $H_0\in i\mathfrak t$, we have the decomposition of $R=R^+\cup R^-$ to positive and negative roots. Notice $R^+=-R^-$ as $\alpha\in R$ if and only if $-\alpha \in R$. We have
\begin{equation}\label{NNLie}
[\mathfrak g_{\alpha}, \mathfrak g_{\beta}]\subset \mathfrak g_{\alpha+\beta}
\end{equation}
 for any positive roots $\alpha, \beta$.

In fact, the decomposition $R=R^+\cup R^-$ is a decomposition $T(G/T)\otimes \C=T_{1, 0}(G/T)\oplus T_{0, 1}(G/T)$, thus gives rise an almost complex structure on $G/T$. The relation \eqref{NNLie} is the integrability condition. This is the well known complex structure on $G/T$.

We can modify this construction a bit such that $R$ is decomposed as a disjoint union of two sets $R_1$ and $R_2$, with the relation $R_1=-R_2$. But we do not require the condition  \eqref{NNLie}. This gives different choices of non-integrable almost complex structures on $G/T$. What is the Iitaka dimension of the anti-canonical bundle, or some other complex line bundle?

\subsection{Symplectic and almost contact invariants}
We can define symplectic invariants by $$\tilde{\kappa}^s(X, \omega)=\sup_{J \tiny{\hbox{ tamed by }} \omega} \kappa,$$ where $\kappa$ could be either $\kappa^J$, $\kappa_J$ or $\kappa^J_W$. We would like to explore more about it. In particular, whether these symplectic invariants are diffeomorphism invariants in dimension $4$? The version for $\kappa^J_W$ is better understood and studied in Section \ref{Weil}. One can also define symplectic invariants in a similar manner for $h^{p, 0}$.

We are also able to define invariants for almost contact structures in the following way. For any almost contact manifold $(M^{2n-1}, J)$, the product $M^{2n-1}\times S^1$ admits a natural almost complex structure. Then our almost complex invariants $\kappa^J$, $\kappa_J$, $\kappa^J_W$ and $h^{p,0}$ on $M\times S^1$ would induce almost contact invariants on $(M, J)$. It is particularly interesting to study them for all $3$-manifolds.


\newpage
\appendix
\section{Uniqueness of exceptional curves for irrational symplectic $4$-manifolds}
In this appendix, we would give an affirmative answer to the following question, which is Question 4.18 in \cite{p=h}.

\begin{que}\label{Es2}
Suppose $M$ is symplectic $4$-manifold, which is not diffeomorphic to $\mathbb CP^2\#k\overline{\mathbb CP^2}$. Let $E\in \mathcal E_{K_J}$. Is it true that for any subvariety $\Theta=\{(C_i, m_i)\}$ in class $E$, {\it i.e.} $E=\sum m_i[C_i]$, each $C_i$ is a smooth rational curve?
\end{que}

Here, a class $E\in \mathcal E_{K_J}$ (called an exceptional curve class) is a $K_J$-spherical class such that $E^2=K_J\cdot E=-1$, where $K_J$ is the canonical class of $J$ and a {\it $K_J$-spherical class} is a class $e$ which could be represented by a smoothly embedded sphere and $g_J(e):=\frac{1}{2}(e\cdot e+K_J\cdot e)+1=0$. For a generic tamed $J$, any exceptional curve class is represented by a unique embedded $J$-holomorphic sphere with self-intersection $-1$.

The answer to Question \ref{Es2} was known to be true for irrational ruled surfaces \cite{s2mod}, and there are even many integrable complex structures on rational surfaces where the statement does not hold \cite{p=h, s2mod}.

Before answering Question \ref{Es2}, we first prove the following result, which is essentially Proposition 2.11 of \cite{ES}.

\begin{prop}\label{ES211}
Suppose that $(M, \omega)$ is a non rational or ruled symplectic $4$-manifold and $J$ is an $\omega$-tame almost complex structure. If $\Theta\in \mathcal M_E$ for a class $E\in \mathcal E_{K_J}$, then $\Theta$ is an exceptional curve of the first kind.
\end{prop}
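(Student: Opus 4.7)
My plan is to follow the strategy of Evans-Smith, combining Sikorav's integrability theorem with the structural constraint that non-rational, non-ruled symplectic $4$-manifolds place on $K$-negative pseudoholomorphic curves.

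First I would apply Theorem 3 of \cite{Sik} to obtain a tamed almost complex structure $J'$ on $M$ that agrees with $J$ outside an arbitrarily small neighborhood of $|\Theta|$, for which $\Theta$ remains pseudoholomorphic, and which is integrable on some open neighborhood $U\supset|\Theta|$. On $U$ the subvariety $\Theta$ becomes a genuine effective analytic cycle on a bona fide complex surface, so classical tools (adjunction with equality, Castelnuovo contractibility, local factorization of birational morphisms) become available in a neighborhood of $\Theta$. Crucially, since $M$ is neither rational nor ruled, Lemma 2.1 of \cite{p=h} applies to $J'$ as well, so any irreducible $J'$-holomorphic curve $C$ with $K\cdot[C]<0$ must be a smoothly embedded rational curve with self-intersection $-1$.

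Next, write $\Theta=\{(C_i,m_i)\}_{i=1}^{k}$. From $\sum m_i K\cdot[C_i]=K\cdot E=-1$ and $m_i\ge 1$, at least one component, say $C_1$, satisfies $K\cdot[C_1]<0$, and by the previous step $C_1$ is a smooth $(-1)$-sphere. Using the local integrability of $J'$ around $|\Theta|$, one performs a symplectic blow-down $\pi:M\to M^{\#}$ contracting $C_1$; the resulting symplectic $4$-manifold $M^{\#}$ is still tamed and, since being non-rational/non-ruled is preserved under blow-down, again not diffeomorphic to a rational or ruled surface. The image $\pi_{*}\Theta$ is a pseudoholomorphic subvariety on $M^{\#}$ in class $\pi_{*}E$. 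Setting up induction on $\sum m_i$ (or on the number of irreducible components), one computes $(\pi_{*}E)^{2}$ and $K_{M^{\#}}\cdot\pi_{*}E$ from the relation $K_M=\pi^{*}K_{M^{\#}}+[C_1]$ and the orthogonal decomposition $H^{2}(M;\mathbb{Z})=H^{2}(M^{\#};\mathbb{Z})\oplus\mathbb{Z}[C_1]$, and verifies that the resulting class is of the form needed to invoke the inductive hypothesis on $M^{\#}$. After finitely many such steps $\Theta$ reduces to a single smooth $(-1)$-sphere, and unwinding the blow-downs exhibits the original $\Theta$ as an exceptional curve of the first kind.

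The main obstacle will be ruling out components $C_i$ with $K\cdot[C_i]\ge 0$ appearing in $\Theta$, since such components cannot be dealt with by the blow-down reduction above. To control them, I would combine positivity of intersections between distinct irreducible $J'$-holomorphic curves with the McDuff adjunction inequality $[C_i]^{2}+K\cdot[C_i]\ge -2$ and the rigid numerical constraints $E^{2}=K\cdot E=-1$. The expectation is that a hypothetical component $C_j$ with $K\cdot[C_j]\ge 0$ would force either $E^{2}<-1$ or $K\cdot E>-1$ once the $K$-negative components are accounted for via positivity of intersection with $E$. A careful bookkeeping of these inequalities should close the argument; the most delicate point is handling higher multiplicities $m_i\ge 2$ on $K$-nonnegative components, where the usual adjunction does not apply directly to $m_iC_i$ and one must exploit the integrable local model on $U$ more carefully, for example by deforming $\Theta$ within $|\Theta|$ on the complex surface $U$.
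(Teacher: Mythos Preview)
Your setup is exactly the paper's: apply Sikorav to make $J$ integrable near $|\Theta|$ while staying tamed, use $K\cdot E=-1$ together with Lemma~2.1 of \cite{p=h} to locate a smooth $(-1)$-sphere component $C_1$, blow it down, and induct on the number of components.

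The gap is in what you call ``the main obstacle.'' There is no obstacle. Components with $K\cdot[C_i]\ge 0$ do \emph{not} need to be ruled out, and your proposed workaround via adjunction and positivity of intersection is both unnecessary and unlikely to succeed (such components genuinely occur---for instance $(-2)$-spheres---and are perfectly compatible with $E^2=K\cdot E=-1$). What you are missing is the one-line computation that makes the induction self-sustaining: since $C_1$ is a smooth $(-1)$-sphere, $[C_1]\in\mathcal E_{K_J}$, and because $M$ is neither rational nor ruled, distinct exceptional classes are pairwise orthogonal, so $E\cdot[C_1]=0$ whenever $[C_1]\ne E$. Your own blow-down formula then gives
\[
K_{M^{\#}}\cdot\pi_*E \;=\; K_M\cdot E - E\cdot[C_1] \;=\; -1.
\]
(If $[C_1]=E$ then $\pi_*E=0$ and tameness forces $\pi_*\Theta$ to be a point, finishing immediately.) Hence $\pi_*\Theta$ again has $K$-degree $-1$, so again contains a $(-1)$-sphere component, and the process repeats. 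After finitely many steps everything collapses to a point; unwinding exhibits $\Theta$ as an exceptional curve of the first kind. This is precisely the paper's argument (it cites Lemma~2.13 of \cite{ES} and the orthogonality of exceptional classes for this step).

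So your plan is correct up through the blow-down; you just need to recognise that the orthogonality $E\cdot[C_1]=0$ is the entire content of the inductive step, and drop the last paragraph.
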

Recall that a $J$-holomorphic subvariety $\Theta$ is called an exceptional curve of the first kind if there exists a neighborhood $(N, J')$ of $\Theta$ where $J'$ is an integrable complex structure with $J'|_{|\Theta|}=J_{|\Theta|}$, an open neighborhood $N'$ of $0\in \mathbb C^2$, and a holomorphic birational map $\pi: N\rightarrow N'$, such that $\Theta$ is a $J'$-holomorphic $1$-subvariety and $\pi^{-1}(0)=\Theta$.
\begin{proof}
We first show that there exists a tamed almost complex structure $J'$ for which $\Theta$ is $J'$-holomorphic and $J'$ is integrable on a neighborhood of $|\Theta|$. It essentially follows from Theorem 3 of \cite{Sik}, with a couple of remarks.  By Theorem 1 of \cite{Sik}, there exists a complex structure $J_1$ on a neighborhood $U_0$ of Sing$(|\Theta|)$ coinciding with $J$ on Sing$(|\Theta|)$ and such that $|\Theta|\cap U_0$ is a $J_1$-complex curve. The first remark is that we can choose $J_1$ and $U_0$ such that it is arbitrarily closed to $J$ in the $C^0$ norm. This follows because it is clear from original construction of Micallef-White \cite{MW} that the $C^1$-diffeomorphism of $U_0$ and a neighborhood of the origin in the complex plane has Jacobian the identity matrix at the singular points. Hence, by choosing $U_0$ small, $J_1$ is $C^0$ close to $J$ in $U_0$. We then extend $J_1$ to $M$ as an almost complex structure so that $J_1$ is close to $J$ on a neighborhood $U$ of $|\Theta|$ and the regular part $C_1$ of $|\Theta|$ is still $J_1$-complex. We can assume that $C_1\cap U_0$ is a finite union of punctured disks $D_k$.

As in \cite{Sik}, we can assume Sing$(C)$ meets every component of $C$. Then $C_1$ has no closed component, thus its normal bundle is trivial as a complex bundle. Thus we can find a tubular neighborhood $T_1$ and a diffeomorphism $\tau: (T_1, C_1)\rightarrow (C_1\times \C, C_1\times \{0\})$. We transport the structure $J_1$ to obtain $\tau_*(J_1)=J_2$ on $C_1\times \C$ such that $C_1\times \{0\}$ is $J_2$-complex.

Let $J_0=J|_{C_1}\times i$ on $C_1\times \C$. Let $A_k\subset D_k$ be annuli (with respect to $J_1$), and $A$ be the union of $A_k$. Denote $A'$ and $A_k'$ respectively for those $J_0$-holomorphic curves that are $(J_0,J_1)$-biholomorphic to $A$ and $A_k$ on $C_1$ and $D_k$ respectively. The map could be chosen arbitrarily close to the identity by our construction. We then apply the extension lemma used in \cite{Sik} to  $A, A'$ to get a diffeomorphism $\psi$ between neighborhoods $N$ and $N'$ of $A$ and $A'$ that extends the previous one between $A$ and $A'$, such that $\psi_*(J_0)=\tau_*(J_1)$. We impose that it extends to a self-diffeomorphism (still denote $\psi$) of $C_1\times \C$, such that it is arbitrarily close to identity at $C_1\times \{0\}$.

Since the restriction of $\tau_*^{-1}\circ \psi_*$ on $C_1$ is the identity, by choosing the tubular neighborhood $T_1$ smaller, we can get $\tau_*^{-1}\circ \psi_*(J_0)$ arbitrarily $C^0$ close to $J$. Hence the complex structure
$$\tilde J=\begin{cases}\begin{array}{cl}

\tau_*^{-1}\circ \psi_*(J_0), & \ \text{on} \ T_2\setminus U_0\\
J_1, &\ \text{on } U_0
\end{array}
\end{cases}$$
defined as in \cite{Sik} (extend to whole $M$ by a cutoff function) is arbitrarily $C^0$ close to the original tamed $J$. Hence, it is tamed.

We then prove the proposition by induction on the number of irreducible components of $\Theta$. Let $\Theta=\{(C_i, m_i), 1\le i\le n\}\in \mathcal M_E$, where each $C_i$ is an irreducible $J$-holomorphic curve. We have $$-1=K\cdot E=K\cdot \Theta=\sum_{i=1}^nm_iK\cdot [C_i].$$ Hence, there is some $i$ such that $K\cdot [C_i]<0$. By Lemma 2.1 of \cite{p=h}, we know $C_i$ is a smooth rational $-1$-curve. We write $\pi: M\rightarrow M'$ for the blowing down map with respect to the above chosen $J'$ that is integrable on a neiborhood of $|\Theta|$.

We notice that if $\pi_*(\Theta)$ is not a point, then $K_{M'}\cdot \pi_*E=-1$. It follows from the calculation of Lemma 2.13 in \cite{ES} or the combinatorial blowdown calculation in \cite{p=h} as the classes of in $\mathcal E_{K_J}$ are orthogonal when $M$ is non rational or ruled. Then by induction, $\Theta$ can be blown down to a point. Therefore $\Theta$ is an exceptional curve of the first kind.
\end{proof}

By the above understanding of Sikorav's result, the argument of \cite{ES} would lead to the following more complete version of their main result.
\begin{thm}\label{ES1}
Let $X$ be a symplectic $4$-manifold with $K_X=[\omega]$. If there is a symplectic embedding of rational homology ball $\iota: B_{p, q}\rightarrow X$, such that the rational blow-up of  $\iota(B_{p, q})$ is non-rational, then  $$l\le 4K_X^2+7, $$ where $l$ is the length of the continued fraction expansion of $\frac{p^2}{pq-1}$. When, $p=n, q=1$ (then $l=n-1$), we get the stronger inequality $l\le 2K_X^2+1$.
\end{thm}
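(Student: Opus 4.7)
The plan is to follow the Evans-Smith strategy \cite{ES} essentially verbatim, replacing their $b^+$-constrained form of Proposition 2.11 by the unconditional Proposition \ref{ES211} established above. First I would perform the symplectic rational blow-up of $\iota(B_{p,q}) \subset X$, producing a closed symplectic $4$-manifold $(\tilde X, \tilde\omega)$ in which the ball is replaced by a chain of transversally intersecting symplectic $2$-spheres $C_1, \ldots, C_l$ whose self-intersections $-b_1, \ldots, -b_l$ realise the continued fraction expansion of $p^2/(pq-1)$. By hypothesis $\tilde X$ is non-rational; the chain of negative spheres together with the condition $K_X = [\omega]$ also rules out $\tilde X$ being irrational-ruled, so Proposition \ref{ES211} applies to $(\tilde X, \tilde\omega)$.

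Next, I would pick an $\tilde\omega$-tamed almost complex structure $J$ making each $C_i$ be $J$-holomorphic, and run the Evans-Smith iterative blow-down argument on the exceptional classes in $\mathcal E_{K_J}$ that appear within or adjacent to the chain. At each stage the core technical input is that every $J$-holomorphic subvariety in a class of $\mathcal E_{K_J}$ is an exceptional curve of the first kind, so that the blow-down is genuinely well-defined in the almost complex category and produces a new tamed almost complex manifold. This is precisely where the original Evans-Smith proof imposed a restriction on the dimension of self-dual harmonic forms; Proposition \ref{ES211} removes this restriction unconditionally, and since non-rationality is preserved under blow-downs, the proposition remains applicable at every intermediate step of the induction.

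Finally, the numerical bound is extracted from the same chain-blowdown combinatorics used in Section 2 of \cite{ES}, combined with the Wahl singularity arithmetic of Rana-Urz\'ua \cite{RU} that governs the sharper bound when $q = 1$. At each blow-down $K^2$ decreases by exactly one, and the chain configuration forces the total number of blow-downs needed to resolve it to be at least linear in $l$; matching the two sides reproduces $l \le 4K_X^2 + 7$ in general and $l \le 2K_X^2 + 1$ when $p = n, q = 1$.

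The main obstacle is purely bookkeeping: verifying that Proposition \ref{ES211} applies uniformly throughout the induction (which follows because non-rationality is a birational property and each intermediate manifold remains non-rational) and checking that no numerical constant in Evans-Smith's estimate is perturbed by substituting our unconditional statement for their $b^+$-restricted one. No genuinely new estimate is required beyond Proposition \ref{ES211}; the arithmetic content of Theorem \ref{ES1} is inherited directly from Evans-Smith once the self-dual-harmonic-form obstruction has been removed.
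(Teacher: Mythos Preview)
Your broad strategy matches the paper's: run the Evans--Smith argument verbatim, substituting the unconditional Proposition~\ref{ES211} for their $b^+$-restricted version. The paper says exactly this for the case when the rational blow-up $W$ is non-ruled, even noting (as you do) that Evans--Smith themselves remarked this extension is possible.

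However, there is one genuine gap. Proposition~\ref{ES211} requires $W$ to be neither rational \emph{nor ruled}, so you must exclude the irrational-ruled case. You assert that ``the chain of negative spheres together with the condition $K_X=[\omega]$ also rules out $\tilde X$ being irrational-ruled,'' but you give no argument, and this is in fact the \emph{only} new content in the paper's proof. The paper argues as follows: suppose $W$ were irrational-ruled with minimal model $S$, reached after $k$ blow-downs, so $K_S^2\le 0$ and $K_S^2-k=K_W^2=K_X^2-l$. Choose a tamed $J$ making the chain $C_1,\ldots,C_l$ holomorphic; by Theorem~1.2 of \cite{s2mod} each $C_i$ is a component of one of the finitely many reducible fibers in the fiber class $T$, and by iteratively applying Corollary~2.10 of \cite{p=h} one sees that each such reducible fiber can be blown down to a single smooth sphere of square zero. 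This forces $l\le k$, whence $K_S^2=K_X^2+(k-l)>0$ (since $K_X^2=[\omega]^2>0$), contradicting $K_S^2\le 0$. This argument is not obvious and uses specific structural results about $J$-holomorphic curves in ruled surfaces; your proposal gives no indication of it.

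Two minor corrections: (i) what must be preserved under blow-down is ``neither rational nor ruled,'' not merely ``non-rational'' (though this is equally automatic); (ii) the sharper $q=1$ bound is inherited from Evans--Smith, not from \cite{RU}, which the paper cites only as context for the algebro-geometric optimal bound.
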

In particular, it would lead to the same bound of the length of Wahl singularities ({\it i.e.} cyclic quotient singularities of type $\frac{1}{p^2}(pq-1, 1)$), for which an optimal bound is obtained in \cite{RU}.

\begin{proof}
As remarked in \cite{ES}, by  Proposition \ref{ES211} and the above understanding of Sikorav's result, their argument is able to obtain the above statement when the rational blow-up of $\iota(B_{p, q})$ is non-ruled, by appealing to Lemma 2.1 of \cite{p=h} instead of their Corollary 2.2.

We can show that the rational blow-up of $\iota(B_{p, q})$, denoted by $W$, cannot be irrationally ruled. If it were, we blow down the exceptional curves $k$ times to get a minimal model $S$. We have $K_S^2\le 0$ since $W$ is  irrationally ruled. We have the relation $K_S^2-k=K_W^2=K_X^2-l$. As in \cite{ES}, the number $l$ is also the number of (negative) spheres $\{C_1, \cdots, C_l\}$ in the chain of exceptional divisor of the singularity of type $\frac{1}{p^2}(pq-1, 1)$. We can choose a tamed almost complex structure $J$ such that each $C_i$ is $J$-holomorphic. By Theorem 1.2 of \cite{s2mod}, each negative rational curve is an irreducible component of the finitely many (disjoint) reducible subvarieties in the fiber class $T$.  We can further choose this $J$ such that it is integrable in a neighborhood of the union of these reducible subvarieties. When $W$ is symplectically non-minimal, or equivalently, there is at least one reducible subvariety in the class $T$, by Corollary 2.10 of \cite{p=h}, we know there is at least one smooth $J$-holomorphic $-1$ rational curve (which must be an irreducible component of a reducible subvariety in the class $T$). Blow it down and continue this process, we known each reducible subvariety in the class $T$ can be reduced to a smooth sphere of square $0$ by blowdown. Hence, in particular, we know $l\le k$. Since $K_X^2=[\omega]^2>0$, we have $K_S^2=K_X^2+(k-l)>0$, which is a contradiction.
\end{proof}

We will now give an affirmative answer to Question \ref{Es2} also for other symplectic $4$-manifolds. More precisely, we have the following.

\begin{thm}\label{1E}
Let $M$ be a symplectic $4$-manifold which is not diffeomorphic to $\mathbb CP^2\#k\overline{\mathbb CP^2}, k\ge 1$. Then for any tamed $J$, there is a unique $J$-holomorphic subvariety in any class $E \in \mathcal E_{K_J}$, whose irreducible components are smooth rational curves of negative self-intersection. Moreover, this $J$-holomorphic subvariety is an exceptional curve of the first kind when $M$ is not ruled.
\end{thm}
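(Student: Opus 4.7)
The plan is to split the proof according to the symplectic Kodaira-type classification of $M$. First I would dispose of the remaining rational cases. Since $\mathbb{CP}^2\#k\overline{\mathbb{CP}^2}$ for $k\ge 1$ is excluded by hypothesis, the only rational possibilities are $\mathbb{CP}^2$ and $S^2\times S^2$; a class $E$ of self-intersection $-1$ would require $a^2=-1$ in the former and $2ab=-1$ in the latter, neither of which is solvable over $\mathbb{Z}$. Hence $\mathcal{E}_{K_J}=\emptyset$ in both cases and the statement is vacuous, reducing us to the assumption that $M$ is either non-ruled or irrationally ruled.

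The non-ruled case is essentially already packaged by the preceding material. Proposition~\ref{ES211} asserts that every $\Theta\in\mathcal{M}_E$ is an exceptional curve of the first kind, which is exactly the ``Moreover'' clause. Since $\Theta$ is then, in a local integrable model, the exceptional divisor of a holomorphic birational map to a smooth point, the classical structure of such resolutions by iterated blowups forces each irreducible component of $|\Theta|$ to be a smooth rational curve and their intersection matrix to be negative definite; in particular every component has negative self-intersection. Uniqueness is immediate from Proposition~\ref{1pcE}, applied to the trivial positive combination consisting of the single class $E$.

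The irrationally ruled case is where the real work lies. Here Proposition~\ref{1pcE} is unavailable, and the blowdown induction driving Proposition~\ref{ES211} does not close: because $\mathcal{E}_{K_J}$ need not be orthogonal, contracting a $-1$ component of $\Theta$ can push $E$ into a class of square $0$ (homologous to a fiber of the minimal irrationally ruled model) rather than into another exceptional class. I would proceed in two steps. For the structural claim I would combine the rationality of components for irrationally ruled surfaces (from \cite{s2mod}) with an adjunction-plus-positivity argument: a smooth rational component $C_i$ with $[C_i]^2\ge 0$ would be homologous to a multiple of the fiber class $F$, and $E\cdot F=0$ together with positivity of intersection then forces such a $C_i$ not to appear inside $\Theta$. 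For uniqueness, I would imitate the rewriting in Lemma~\ref{pointscurve}: given distinct $\Theta_1,\Theta_2\in\mathcal{M}_E$, strip common components to obtain $\Theta_0,\Theta_0'$ with no common component and $[\Theta_0]=[\Theta_0']$; positivity of intersection forces $[\Theta_0]^2\ge 0$, while the structural statement combined with an iterated-blowdown tree analysis of the rational components in $\Theta_0$ forces $[\Theta_0]^2<0$, a contradiction.

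The main obstacle is precisely this last point in the irrationally ruled case: converting ``each component is a smooth rational curve of negative self-intersection'' into global negative-definiteness of the intersection form on the sublattice spanned by the components of $\Theta_0$. One has to bookkeep carefully how an $E$-configuration interacts with the fiber class of the minimal model, and how multiplicities inside $\Theta_0,\Theta_0'$ behave under iterated contraction of $-1$ spheres, all without the clean termination criterion available in the non-ruled setting. Making these compatible — in particular pushing the induction through even when $\pi_*E$ drops out of $\mathcal{E}_{K_{J'}}$ and lands instead in a fiber multiple — is the technical heart of the argument.
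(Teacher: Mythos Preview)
Your non-ruled argument has a circularity: you invoke Proposition~\ref{1pcE} for uniqueness, but in the paper Proposition~\ref{1pcE} is stated and proved \emph{after} Theorem~\ref{1E}, and its proof explicitly reads ``As in the proof of Theorem~\ref{1E}\ldots\ We can apply the same argument to show the multiplicities are the same.'' So the content you are citing is exactly the content you are supposed to supply. The paper's actual uniqueness argument in the non-ruled case runs directly: take $\Theta_0$ (an exceptional curve of the first kind, produced by Gromov compactness plus Proposition~\ref{ES211}) and a putative second $\Theta\in\mathcal M_E$; use Sikorav to pass to a tamed $J'$ integrable near $|\Theta_0|\cup|\Theta|$; then iteratively blow down the smooth $-1$ sphere $E'$ sitting in $\Theta_0$, at each step checking whether $E'$ is also a component of $\Theta$ (then $\pi_{1*}E$ stays exceptional, using orthogonality of $\mathcal E_{K_J}$ in the non-ruled case) or not (then $E'\cdot E=0$ forces $E'\cdot[C_i]=0$ for every $i$, so $\Theta$ descends unchanged). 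After $\Theta_0$ has been contracted to a point one has $\pi_*[\Theta]=0$, and tameness forces $|\Theta|\subset|\Theta_0|$. Finally, the classes $[C_i]$ of the components of $\Theta_0$ are linearly independent---they form a strictly upper-triangular system $[C_i]=E_i-\sum_{j>i}m_{ij}E_j$ in the orthogonal basis $E_1,\ldots,E_n$ determined by the successive blowdowns---so the multiplicities in $\Theta$ are forced to match those in $\Theta_0$. This argument is what later gets compressed into Proposition~\ref{1pcE}; you cannot import it for free here.

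For the irrationally ruled case the paper does not attempt anything like your sketch: it simply invokes Theorem~1.1 of \cite{s2mod}, which already establishes both structure and uniqueness for $S^2\times\Sigma_g\#k\overline{\mathbb{CP}^2}$, $g\ge 1$. The obstacles you flag---failure of orthogonality in $\mathcal E_{K_J}$, $\pi_*E$ dropping into a fiber multiple rather than staying exceptional, and the missing negative-definiteness for the sublattice spanned by the components of $\Theta_0$---are real, and they are exactly why this case is outsourced to a separate reference with different machinery (the moduli theory of subvarieties in spherical classes). Your outline is not wrong in spirit, but completing it would amount to reproving part of \cite{s2mod}, not proving Theorem~\ref{1E}.
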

\begin{proof}
When $M$ is an irrational ruled surface $S^2\times \Sigma_g\#k\overline{\mathbb CP^2}$, $g\ge 1$,  it follows from Theorem 1.1 of \cite{s2mod}.

We now prove the theorem when $M$ is neither rational nor ruled. In this situation, there is at lease one $J$-holomorphic subvariety in class $E$ which is an exceptional curve of the first kind as explained in the following. For generic tamed almost complex structure, $E$ is represented by a smooth rational curve. By Gromov compactness, we have a $J$-holomorphic stable map representing $E$. By Proposition \ref{ES211}, this is an exceptional curve of first kind. We call it $\Theta_0$. In particular, there is a smooth rational $-1$-curve $E'$. For convenience, we also call its homology class $E'$. Since $M$ is not rational or ruled, we know $E\cdot E'=0$.

Then we prove the uniqueness. Suppose there is another $J$-holomorphic subvariety in class $E$, say $\Theta=\{(C_i, m_i)\}_{i=1}^n$. By Theorem 3 of \cite{Sik}, there exists another tamed almost complex structure $J'$ for which $\Theta_0$ and $\Theta$ are both $J'$-holomorphic and $J'$ is integrable on a neighborhood of $|\Theta_0|\cup |\Theta|$. We may therefore assume without loss of generality that $J=J'$ in the following.

There are two cases of blowdowns. If there is some $C_i$ with $E'=[C_i]$, then we simply blow down $E'$ by $\pi_1:M\rightarrow M'$ and $\pi_{1*}([\Theta])=\pi_{1*}([\Theta_0])$ become an exceptional curve class $\pi_{1*}E$ in $M'$.

If $E'$ is not an irreducible component of $\Theta$, we have $E'\cdot [C_i]\ge 0, \forall i$. Since $E\cdot E'=0$, we have $E'\cdot [C_i]=0, \forall i$.
We blow down the curve $E'$ by $\pi_2: M\rightarrow \tilde M$. Now $\pi_{j*}(\Theta_0)$ and $\pi_{j*}(\Theta)$ are still $\pi_{j*}J$-holomorphic subvarieties representing $\pi_{j*}(E)$ for $j=1, 2$.  By induction and Lemma 2.13 of \cite{ES}, we can continue blowing down $\Theta_0$ to a smooth point. Call the composition of the sequence of (first and second types of) blowdowns by $\pi: M \rightarrow M_0$. We have $\pi_*([\Theta])=\pi_*([\Theta_0])=0$. Since $J$ is tamed, we know $\pi(|\Theta|)=\{pts\}$. In other words, we have proved that the support $|\Theta|\subset |\Theta_0|$. In other words, $\Theta$ and $\Theta_0$ share the same irreducible components, but their multiplicities (nonnegative, but could be zero) might be different.

Now we show that these multiplicities are in fact the same. For doing so, we rewrite the homology classes of each $C_i$. Since $\Theta_0$ is an exceptional curve of first kind, each $C_i$ corresponds to a $-1$-rational curve in each blowdown. In other words, there are $n$ second cohomology classes $E_1, \cdots, E_n$ which can be represented by embedded symplectic spheres with $E_i^2=-1$, $i=1, \cdots, n$, and $E_i\cdot E_j=0$ for $i\ne j$, such that $[C_i]=E_i-\sum_{j> i} m_{ij}E_j$. Here $(m_{ij})_{i, j=1}^n$ is a strict upper triangular square matrix whose entries are $0$ or $1$.
 In particular, $E=E_1$ and $E'=E_n=[C_n]$. Thus $[C_i]$, $i=1, \cdots, n$,  are linearly independent. Hence there is a unique way to represent the class $E$ by the linear combination of $[C_i]$. This implies each irreducible component of $\Theta$ and $\Theta_0$ has the same multiplicity, {\it i.e.} $\Theta=\Theta_0$.
Hence, we finish the proof of uniqueness when $M$ is neither rational nor ruled, and thus all the statements in the theorem.
\end{proof}

The result of Theorem \ref{1E} is sharp. When $M=\mathbb CP^2\#k\overline{\mathbb CP^2}$ with $k\ge 1$,  a $J$-holomorphic subvariety in class $E$ need not to be an exceptional curve of the first kind.

 If $M=\mathbb CP^2\#\overline{\mathbb CP^2}$, we look at a Hirzebruch surface $\mathbb F_3$. It contains a $-3$-curve in class $2E_1-H$. Then along with a fiber, it constitutes a $J$-holomorphic subvariety in class $E_1=(2E_1-H)+(H-E_1)$ which is not an exceptional curve of the first kind.

For $k= 2$, we blow up at a point not on the $-3$-curve of $\mathbb F_3$. If we call the blow up divisor $E_2$, then the subvariety $\{(2E_1-H, 1), (H-E_1-E_2, 1), (E_2, 1)\}\in \mathcal M_{E_1}$ is not an exceptional curve of the first kind. For $k>2$, we can continue this process by blowing up points outside the support of this subvariety.

When $S^2\times \Sigma_g\#k\overline{\mathbb CP^2}$, $g\ge 1$ and $k>1$, there are also examples of integrable complex structure $J$ such that a subvariety in class $E\in \mathcal E_{K_J}$ is not  an exceptional curve of the first kind. We first blow up a fiber to get two exceptional rational curves in classes $E_1$ and $T-E_1$ respectively. Then we blow up the intersection of these two rational curves. The subvariety $\{(T-E_1-E_2, 1), (E_2, 1), (E_1-E_2, 1)\}\in \mathcal M_{T-E_2}$ is not an exceptional curve of the first kind. This phenomenon is the essential underlying reason for the totally different proofs of Question \ref{Es2} for the irrational ruled case in \cite{s2mod} and the non ruled case in this paper.

When $M=\mathbb CP^2\#k\overline{\mathbb CP^2}$ with $k\ge 8$, even the statement of Question \ref{Es2} is not true, see \cite{s2mod}.

The argument of Theorem \ref{1E} could be used to prove other uniqueness results of subvarieties. The following is used in our proof of Theorem \ref{pointscurve}. We will skip some steps appeared in the proof of Theorem \ref{1E}.

\begin{prop}\label{1pcE}
Suppose $M^4$ is not rational or ruled, and $J$ is a tamed almost complex structure. Let $\{E_1, \cdots, E_n\}$ be the set of exceptional curve classes. Then there is a unique $J$-holomorphic subvariety in any class $k_1E_1+\cdots+k_nE_n$ where $k_i$ are non-negative integers.
\end{prop}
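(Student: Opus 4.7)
The strategy follows that of Theorem \ref{1E}, applied to a positive integer combination of exceptional classes. By Theorem \ref{1E}, for each $i$ there is a unique $J$-holomorphic subvariety $\Theta_i$ in class $E_i$, and each $\Theta_i$ is an exceptional curve of the first kind whose irreducible components are smooth rational curves of negative self-intersection. I would define the natural candidate $\Theta^{\ast} = \sum_i k_i \Theta_i$ by taking, for each irreducible curve $F$ appearing as a component of some $\Theta_i$, the multiplicity $M_F := \sum_i k_i m_{iF}$, where $m_{iF}$ denotes the multiplicity of $F$ in $\Theta_i$ (zero when $F$ is not a component). Then $[\Theta^\ast] = \sum_i k_i[\Theta_i] = \sum_i k_i E_i$, so $\Theta^\ast$ realizes existence.

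For uniqueness, let $\Theta$ be any $J$-holomorphic subvariety with $[\Theta] = \sum_i k_i E_i$. First, by the $C^0$ refinement of Sikorav's theorem developed in the proof of Proposition \ref{ES211}, I would replace $J$ by a nearby tamed almost complex structure (still called $J$) that is integrable on a neighborhood of $|\Theta|\cup\bigcup_i|\Theta_i|$. Next, I would contract the finite set $\mathcal{C} := \{F : F \text{ is a component of some } \Theta_i\}$ by a sequence of blow-downs of smooth $-1$ curves: since each $\Theta_i$ is an exceptional curve of the first kind, and since $M$ being neither rational nor ruled forces $E_i\cdot E_j = 0$ for $i\neq j$, at each stage a smooth $-1$ rational curve can be found and contracted, producing a proper holomorphic map $\pi: M \to M_0$ whose exceptional set is exactly $\bigcup_i|\Theta_i|$. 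Iterating Gompf's symplectic blowdown at each step produces a symplectic form $\omega_0$ on $M_0$ taming the induced almost complex structure $J_0$. Then $\pi_\ast([\Theta]) = \sum_i k_i\,\pi_\ast([\Theta_i]) = 0$, since $\pi_\ast([F])=0$ for each $F\in\mathcal C$, and the pushforward subvariety $\pi_\ast(\Theta)$ is a $J_0$-holomorphic subvariety of $M_0$ with zero homology class; by $\omega_0$-tameness this forces $\pi_\ast(\Theta)$ to be empty, hence $|\Theta|\subset\bigcup_i|\Theta_i|$.

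Every irreducible component of $\Theta$ therefore belongs to $\mathcal C$, so we may write $\Theta = \sum_{F\in\mathcal C} a_F F$ with $a_F\in\mathbb Z_{\ge 0}$, and equating classes gives $\sum_F(a_F - M_F)[F]=0$. To conclude, I would invoke the linear independence of $\{[F] : F\in\mathcal C\}$: their intersection matrix has strictly negative diagonal entries, nonnegative off-diagonal entries by positivity of intersection for distinct irreducible $J$-holomorphic curves (using integrability near $\mathcal C$), and is negative definite because the whole configuration contracts to a finite set of smooth points under $\pi$. Thus $a_F = M_F$ for every $F$, giving $\Theta=\Theta^\ast$.

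The main technical obstacle is the simultaneous blowdown step: ensuring that the collection $\mathcal C$—built from possibly overlapping first-kind exceptional curves—can genuinely be contracted one $-1$ curve at a time, with the composite $\pi$ having the claimed exceptional set at every stage. This should follow by induction on the number of components in $\mathcal C$, using the orthogonality $E_i\cdot E_j=0$ to show that after blowing down a $-1$ component of one $\Theta_{i_0}$, the remaining curves from the other $\Theta_j$'s continue to form first-kind exceptional subvarieties in the blown-down manifold, so that the argument of Proposition \ref{ES211} can be reapplied inductively.
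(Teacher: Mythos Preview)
Your proposal is correct and follows essentially the same route as the paper: build the candidate $\Theta^\ast$ from the unique representatives in each $E_i$, pass to an almost complex structure integrable near the relevant supports via Sikorav, blow down the exceptional configuration, use tameness of the blown-down structure to force $|\Theta|\subset|\Theta^\ast|$, and finish by linear independence of the component classes. The paper's proof is terser and simply refers back to the argument of Theorem~\ref{1E}; your version spells out the simultaneous blowdown more carefully and argues linear independence via negative definiteness of the intersection matrix rather than the upper-triangular rewriting used in Theorem~\ref{1E}, but these are cosmetic differences rather than a genuinely different approach.
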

\begin{proof}
First, there exists a $J$-holomorphic subvariety $\Theta_0$ in any class $k_1E_1+\cdots+k_nE_n$ provided by a combination of $J$-holomorphic subvarieties in classes $E_i$.

Suppose there is another, say $\Theta=\{(C_i, m_i)\}_{i=1}^n$. As in the proof of Theorem \ref{1E}, we may assume $J$ is integrable on a neighborhood of $|\Theta_0|\cup |\Theta|$. We keep blowing down $\Theta_0$ to a smooth point, and denote the composition of the sequence of blowdowns by $\pi: M\rightarrow M_0$. We have $\pi_*([\Theta])=\pi_*([\Theta_0])=0$. Since $J$ is tamed, we know $\pi(|\Theta|)=\{pts\}$. In other words, we have proved that $\Theta$ and $\Theta_0$ share the same irreducible components. We can apply the same argument to show the multiplicities  are the same. Hence, we have shown $\Theta=\Theta_0$.
\end{proof}


\begin{thebibliography}{99}

\bibitem{BCHM} C. Birkar, P. Cascini, C. Hacon, J. McKernan, \textit{Existence of minimal models for varieties of log general type}, J. Amer. Math. Soc. 23 (2010), no. 2, 405--468.
\bibitem{BZ} L. Bonthrone, W. Zhang, \textit{$J$-holomorphic curves from closed $J$-anti-invariant forms}, arXiv:1808.09356, to appear in  Comm. Anal. Geom.



\bibitem{Br3} R. Bryant, \textit{S.-S. Chern's study of almost-complex structures on the six-sphere}, arXiv:1405.3405.



\bibitem{CW} H. Chen, G. Wang, \textit{Kodaira dimension of nearly K\"ahler 6-manifolds}, to appear in Acta Math. Sinica, Chinese Series.
\bibitem{CZ} H. Chen, W. Zhang, \textit{Kodaira dimensions of almost complex manifolds I}, arXiv:1808.00885.

\bibitem{DeT}P. De Bartolomeis, G. Tian, \textit{Stability of complex vector bundles}, J. Differential Geom. 43 (1996), no. 2, 231--275.
 \bibitem{DZ}J.G. Dorfmeister, W. Zhang, \textit{The Kodaira Dimension of Lefschetz Fibrations}, Asian J. Math. 13 (2009), no. 3, 341--357.
\bibitem{DLZ}T. Draghici, T.J. Li, W. Zhang, \textit{On the $J$-anti-invariant cohomology of almost complex $4$-manifolds}, Q. J. Math. 64 (2013), no. 1, 83--111.


\bibitem{ES} J.D. Evans, I. Smith, \textit{Bounds on Wahl singularities from symplectic topology},  Algebr. Geom. 7 (2020), no. 1, 59--85.
\bibitem{Fed} H. Federer, \textit{Geometric measure theory}, Die Grundlehren der mathematischen Wissenschaften, Band 153 Springer-Verlag New York Inc., New York 1969 xiv+676 pp.
\bibitem{Gau} P. Gauduchon, \textit{La $1$-forme de torsion d'une vari\'et\'e hermitienne compacte}, Math. Ann. 267 (1984), no. 4, 495--518.

\bibitem{GH} P. Griffiths, J. Harris, \textit{Principles of algebraic geometry}, Pure and Applied Mathematics. Wiley-Interscience, New York, 1978.

\bibitem{Hir} F. Hirzebruch, \textit{Some problems on differentiable and complex manifolds}, Ann. of Math. (2) 60, (1954). 213--236.
\bibitem{HZ} T. Holt, W. Zhang, \textit{Harmonic forms on the Kodaira-Thurston manifold}, arXiv:2001.10962.

\bibitem{Ii} S. Iitaka, \textit{On D-dimensions of algebraic varieties}, J. Math. Soc. Japan 23 1971 356--373.
\bibitem{Ki} J. R. King, \textit{The currents defined by analytic varieties}, Acta Math. 127 (1971), no. 3-4, 185--220.
\bibitem{Lee} J.M. Lee, \textit{Introduction to smooth manifolds}, Graduate Texts in Mathematics, 218. Springer-Verlag, New York, 2003. xviii+628 pp. ISBN: 0-387-95495-3.
\bibitem{L}T.J. Li, \textit{Symplectic $4$-manifolds with Kodaira dimension zero}, J. Differential Geom. 74 (2006), no. 2, 321--352.
\bibitem{Li15} T.J. Li, \textit{Kodaira Dimension in Low Dimensional Topology}, arXiv:1511.04831.

\bibitem{LLimrn} T.J. Li, A.-K. Liu, \textit{The equivalence between SW and Gr in the case where $b^+=1$}, Internat. Math. Res. Notices 1999, no. 7, 335--345.
\bibitem{LZcag} T.J. Li, W. Zhang, \textit{Comparing tamed and compatible symplectic cones and cohomological properties of almost complex manifolds},
 Comm. Anal. Geom. 17 (2009), no. 4, 651--683.
 \bibitem{LZadd} T.-J. Li, W. Zhang, \textit{Additivity and relative Kodaira dimensions},  Geometry and Analysis, Vol. II, 103-135, Adv. Lect. Math. (ALM), 18, International Press, 2011.
\bibitem{LZrc} T.J. Li, W. Zhang, \textit{$J$-holomorphic curves in a nef class}, Int. Math. Res. Not. (2015) Vol. 2015 12070--12104.

\bibitem{MW} M. Micallef, B. White, \textit{The structure of branch points in minimal surfaces and in pseudoholomorphic curves}, Ann. of Math. (2) 141 (1995), no. 1, 35--85.


\bibitem{RU} J. Rana, G. Urz\'ua, \textit{Optimal bounds for $T$-singularities in stable surfaces},  Adv. Math. 345 (2019), 814--844.
\bibitem{Rem}R. Remmert, \textit{Projektionen analytischer Mengen}, (German) Math. Ann. 130 (1956), 410--441.
\bibitem{Sar} A. Sard, \textit{Hausdorff measure of critical images on Banach manifolds},
Amer. J. Math. 87 1965 158--174.
\bibitem{Sh} B. Shiffman, \textit{On the removal of singularities of analytic sets}, Michigan Math. J. 15 1968 111--120.
\bibitem{Sik} J.-C. Sikorav, \textit{Singularities of $J$-holomorphic curves}, Math. Z. 226 (1997), no. 3, 359--373.
\bibitem{T96} C. Taubes, \textit{${\rm SW}\Rightarrow{\rm Gr}$: from the Seiberg-Witten equations to pseudo-holomorphic curves},  J. Amer. Math. Soc.  9  (1996),  no. 3, 845--918.
\bibitem{Tbk} C. Taubes, \textit{Seiberg Witten and Gromov invariants for symplectic 4-manifolds. Edited by Richard Wentworth}, First International Press Lecture Series, 2. International Press, Somerville, MA, 2000. vi+401 pp. ISBN: 1-57146-061-6.
\bibitem{T09} C. Taubes, \textit{Tamed to compatible: Symplectic forms via
moduli space integration}, J. Symplectic Geom. 9 (2011), 161--250.
\bibitem{T17} C. Taubes, \textit{Tamed to compatible when $b^{2+} = 1$ and $b^1 = 2$},  arXiv:1708.04279.
\bibitem{T} V. Tosatti, \textit{A general Schwarz Lemma for almost-Hermitian manifolds}, Comm. Anal. Geom. 15 (2007), no.5, 1063--1086.
\bibitem{Y1} X. Yang, \textit{Hermitian manifolds with semi-positive holomorphic sectional curvature}. Math. Res. Lett. 23 (2016), 939-952.
 \bibitem{Y} X. Yang, \textit{Scalar curvature on compact complex manifolds}, Tran. Amer. Math. Soc. 371 (2019), 2073-2087.
 \bibitem{Yau} S-T. Yau, \textit{On the curvature of compact Hermitian manifolds}, Invent. Math. 25 (1974), 213--239.
\bibitem{Z1}W.  Zhang, \textit{Geometric structures, Gromov norm and Kodaira dimensions}, Adv. Math. 308 (2017), 1--35.
\bibitem{p=h} W. Zhang, \textit{The curve cone of almost complex $4$-manifolds}, Proc. Lond. Math. Soc. (3) 115 (2017), no. 6, 1227--1275.
\bibitem{s2mod} W. Zhang, \textit{Moduli space of $J$-holomorphic subvarieties},  arXiv:1601.07855.
\bibitem{Z2}W. Zhang, \textit{Intersection of almost complex submanifolds}, Camb. J. Math. 6 (2018), no. 4, 451--496.
\bibitem{Ziccm} W. Zhang, \textit{From smooth to almost complex}, Proceedings of the eighth ICCM.

\end{thebibliography}
\end{document}